\numberwithin{equation}{section}
\numberwithin{figure}{section}
\newtheorem{theorem}{Theorem}[section]
\newtheorem{lemma}{Lemma}[section]
\newtheorem{defin}{Definition}[section]
\newtheorem{rem}{Remark}[section]
\newtheorem{assump}{Assumption}[section]
\newcommand{\eps}{\varepsilon}
\newcommand{\rr}{\mathbb{R}}
\newcommand{\rs}{\mathbb{S}}
\newcommand{\dd}{\,\mathrm{d}}
\newcommand{\rmL}{\mathrm{L}}
\newcommand{\rmH}{\mathrm{H}}
\newcommand{\rmC}{\mathrm{C}}
\newcommand{\bc}{{\mathbf c}}
\newcommand{\bu}{{\mathbf u}}
\newcommand{\bv}{{\mathbf v}}
\newcommand{\cL}{{\mathcal L}}
\newcommand{\cP}{{\mathcal P}}
\newcommand{\cT}{{\mathcal T}}
\newcommand{\RR}{\mathcal{R}_\eps}
\newcommand{\spec}{\mathrm{spec}}
\newcommand{\Span}{\mathrm{Span}}
\definecolor{ddmagenta}{rgb}{0.7,0,0.9}
\definecolor{ddorange}{rgb}{1,0.5,0}
\definecolor{ALEXBLAU}{rgb}{0.3,0.1,0.9}
\begin{document}

\title{Pulses in FitzHugh--Nagumo systems\\with rapidly oscillating coefficients}
\author{Pavel Gurevich\footnote{Freie Universit\"at Berlin, Institute for Mathematics, Arnimallee 3, 14195 Berlin, Germany}
\footnote{RUDN University, Miklukho-Maklaya 6, 117198, Moscow, Russia}
\quad and \quad
Sina Reichelt\footnote{Weierstrass Institute for Applied Analysis and Stochastics, Mohrenstrasse 39, 10117 Berlin, Germany}}
\maketitle

\begin{abstract}\noindent
This paper is devoted to pulse solutions in FitzHugh--Nagumo systems that are coupled parabolic equations with rapidly periodically oscillating coefficients. In the limit of vanishing periods, there arises a two-scale FitzHugh--Nagumo system, which qualitatively and quantitatively captures the dynamics of the original system. We prove existence and stability of pulses in the limit system and show their proximity on any finite time interval to pulse-like solutions of the original system.
\end{abstract}

{\footnotesize \textbf{MSC 2010: }
35B40 
, 37C29 
, 37C75 
, 37N25. 
}

{\footnotesize \textbf{Keywords:} traveling waves, pulse solutions, FitzHugh--Nagumo system, two-scale convergence, spectral decomposition, semigroups.}

\section{Introduction}
\label{sec:intro}

The famous FitzHugh--Nagumo equations, first mentioned in \cite{NAY1962}, model the pulse transmission in animal nerve axons.
The fast, nonlinear elevation of the membrane voltage $u$ is diminished over time by a slower, linear recovery variable $v$. The activator $u$ and the inhibitor $v$ are the solutions of a nonlinear partial differential equation (PDE) coupled with a linear ordinary differential equation (ODE)
\begin{subequations}
\label{eq:system-orig-sub}
\begin{equation}
\label{eq:system-orig}
\tag{\ref{eq:system-orig-sub}.OG}
\begin{aligned}
u_t & = u_{xx} + f(u) - \alpha v , \\
v_t &  =  - bv + \beta u ,
\end{aligned}
\end{equation}
\end{subequations}
where the nonlinearity is typically given by the cubic function $f(u) = u(1 - u)(u - a)$ for $a \in (0, 1)$. The other parameters usually satisfy $\alpha=1$ and $0 < b\leq \beta \ll 1$. The existence of traveling wave solutions, such as pulses and fronts, are well-known for system \eqref{eq:system-orig}, see e.g.\ \cite{McK1970,Hast1976,Carp1977,Hast1982,JKP1991,AK2015} for pulses and \cite{Deng1991,Szmo1991} for fronts.

We are mainly interested in pulse solutions and consider the following FitzHugh--Nagumo system with rapidly oscillating coefficients in space
\begin{subequations}
\label{eq:system-eps-sub}
\begin{equation}
\label{eq:system-eps}
\tag{\ref{eq:system-eps-sub}.S$_\eps$}
\begin{aligned}
u^\eps_t & = u^\eps_{xx} + f(u^\eps) - \alpha(\tfrac{x}{\eps}) v^\eps , \\
v^\eps_t & = \left( \eps^2 d(\tfrac{x}{\eps}) v^\eps_x \right)_x - b(\tfrac{x}{\eps}) v^\eps + \beta(\tfrac{x}{\eps}) u^\eps ,
\end{aligned}
\end{equation}
\end{subequations}
where $x \in \rr$ and $t>0$.
All coefficients belong to the space $\rmL^\infty(\rs)$ with $\rs = \rr/\mathbb{Z}$ being the periodicity cell, which means that they are $1$-periodic on $\rr$. We imagine that these oscillations model heterogeneity within an excitable medium and $\eps > 0$ is the characteristic length scale of the periodic microstructure.
Moreover, in \eqref{eq:system-eps} we allow for a small (slow) diffusion of the inhibitor $v^\eps$, as it is also done in e.g.\ \cite{Szmo1991}. In this paper we study pulse-type solutions in system \eqref{eq:system-eps}, including the case $d \equiv 0$.

To our best knowledge, there are no results in the literature on the existence of \emph{pulses} in FitzHugh--Nagumo systems with periodic coefficients. However, there exists an extensive literature on traveling \emph{fronts} in reaction-diffusion equations with periodic data, see e.g.\ \cite{HuZi1995,BeHa2002} for continuous periodic media, \cite{GuHa2006,CGW2008} for discrete periodic media, and \cite{Xin2000} for a review and further references to earlier works. The article \cite{Hei2001} investigates front solutions  in perforated domains for single equations as well as monotone systems. Most of these results are based on the maximum principle, which fails for the FitzHugh--Nagumo system.
In \cite{MSU2007} reaction-diffusion systems are studied and exponential averaging is used to show that traveling wave solutions can be described by a spatially homogeneous equation and exponentially small remainders.
The existence of generalized (oscillating) traveling waves $u^\eps (t,x) = \bu (x + ct, \frac{x}{\eps})$ and their convergence to a limiting wave $U(t,x) = \bu_0 (x + ct)$ is proved for parabolic equations in \cite{BoMa2014}. In their approach, the authors reformulate the problem as a spatial dynamical system and use a centre manifold reduction. In \emph{all} previous results the limit equation is always ``one-scale''.

Our approach to find pulses in the FitzHugh--Nagumo system \eqref{eq:system-eps} is, first, to  derive an effective system for vanishing $\eps$ and, secondly, to study the existence of pulses in this new system. In the limit $\eps \to 0$, we obtain the following two-scale system
\begin{subequations}
\label{eq:system-lim-sub}
\begin{equation}
\label{eq:system-lim}
\tag{\ref{eq:system-lim-sub}.S$_0$}
\begin{aligned}
U_t & = U_{xx} + f(U) - \int_0^1 \alpha(y) V(t,x,y) \dd y , \\
V_t & = \left( d(y) V_y \right)_y - b(y) V + \beta(y) U ,
\end{aligned}
\end{equation}
\end{subequations}
where $(x,y) \in \rr \times \rs$ and $t>0$.
Notice that $U(t,x)$ only depends on the macroscopic scale $x\in\rr$, whereas $V(t,x,y)$ also depends on the microscopic scale $y \in \rs$.
We prove that this system admits two-scale pulse solutions $(\bu (x + ct), \bv(x + ct,y))$ under certain assumptions on the parameters $(\alpha, \beta, b, d)$. The main idea of the proof is to decompose $\alpha$ into a sum of eigenfunctions of the differential operator $\cL V = \left( d(y) V_y \right)_y - b(y) V$ and to project the $V$-component onto the corresponding eigenspaces. These projections yield a \emph{guiding system}, which is of the form \eqref{eq:system-orig} and is known to possess a stable pulse solution, and a remaining \emph{guided} part, for which we prove the existence and stability of a pulse solution. Moreover, we show that the two-scale pulse $(\bu, \bv)$ is exponentially stable if the pulse of the corresponding guiding system is exponentially stable.
Furthermore, we show that solutions of the original system \eqref{eq:system-eps} satisfy
\begin{align*}
(u^\eps(t,x) , v^\eps(t,x)) = \left( \bu(x + ct), \bv(x + ct, \tfrac{x}{\eps}) \right) + O(\eps)
\quad\text{as } \eps\to0
\end{align*}
for suitable initial conditions and finite times $t \leq T$. These pulse-type solutions have a profile with a periodic microstructure. In other words, the pulse (its inhibitor component) oscillates in time via $\bv (z, \tfrac{z + ct}{\eps})$. Since our approach yields an explicit relation between two-scale pulses and pulses from the guiding system, we are able to provide numerical examples for pulses in both systems, \eqref{eq:system-eps} and \eqref{eq:system-lim}. Interestingly, in one example, a pulse exists, although the microscopic average over $\rs$ of the inhibitor $\bv$ vanishes at every macroscopic point $x\in\rr$.

\emph{This paper is structured as follows.} In Section \ref{sec:justify} we derive the two-scale system and prove $\rmL^2$-error estimates for the difference between the solutions $(u^\eps, v^\eps)$ and $(U,V)$ of \eqref{eq:system-eps} and \eqref{eq:system-lim}, respectively. Section \ref{subsec:pulse-exist} is devoted to the existence of two-scale pulses $(\bu, \bv)$. The stability of these pulses is studied in Section \ref{subsec:stability}. Finally, we provide three numerical examples in Section \ref{sec:numerics}.

\section{Justification of the two-scale system}
\label{sec:justify}

We aim to justify the two-scale FitzHugh--Nagumo system \eqref{eq:system-lim} and derive error estimates for the difference of $(u^\eps,v^\eps)$ and $(U,V)$ being the solutions of the systems \eqref{eq:system-eps} and \eqref{eq:system-lim}, respectively. Since we do not know whether there exist pulses for the original system, arbitrary solutions to coupled parabolic equations are considered in this section. In order to compare the two inhibitors $v^\eps(t,x)$ and $V(t,x,y)$, which depend on different variables, the \emph{macroscopic reconstruction $\RR$} is defined via
\begin{align}
\RR : \rmL^2(\rr; \rmC^0(\rs)) \to \rmL^2(\rr); \quad
(\RR \Phi)(x) := \Phi(x, \tfrac{x}{\eps}) .
\end{align}
We require continuity with respect to at least one of the two variables $(x,y)$ such that the function $\Phi(x,\frac{x}{\eps})$ is measurable on the null-set $\lbrace (x,\frac{x}{\eps}) \,|\, x\in \rr \rbrace \subset \rr\times \rs$. The operator $\RR : \rmC^0(\rr; \rmL^2(\rs)) \to \rmL^2(\rr)$ is also well-defined, see e.g.\ \cite{LNW02} for more details on the regularity of two-scale functions.
To derive quantitative error estimates, we postulate the following assumptions here and throughout the whole text.
\begin{assump}
\label{assump:coeff}
\begin{enumerate}
\item\label{assump:coeff1}
The coefficients satisfy $\alpha, \beta, b \in \rmL^\infty(\rs)$ and $d \in \rmC^1(\rs)$. Moreover, either
\begin{align*}
& \text{(a)} \quad \exists\, d_*>0: \quad d(y) \geq d_*
\quad \text{for all } y \in \rs , \quad\text{or} \\
& \text{(b)} \quad d(y) \equiv 0 .
\end{align*}
\item\label{assump:coeff2} The nonlinear function $f \in \rmC^1(\rr)$ admits the growth conditions
\begin{align*}
f(u) \geq c_1 u - c_2 \quad\text{if } u \leq 0
\quad\text{and}\quad
f(u) \leq c_3 u  + c_4 \quad\text{if } u \geq 0
\end{align*}
for some constants $c_1, c_2,c_3,c_4 \geq 0$.
\end{enumerate}
\end{assump}

A prototype nonlinearity $f : \rr \to \rr$ that we have in mind is
\begin{align}
\label{eq:f}
f(u) = u(1 - u)(u - a)
\quad\text{with}\quad
a \in ( 0,1 ) .
\end{align}
Of course, our theory also applies to other bistable nonlinearities $f$ with similar properties.

Before we derive error estimates, we make sure that unique classical solutions exist. Therefore, the differential operators $\cL_2^\eps: D(\cL_\eps) \to \rmL^2(\rr)$ and $\cL_2^0 : D(\cL_0) \to \rmL^2(\rr\times\rs)$ are introduced via
\begin{align*}
\begin{array}{ll}
(\cL_\eps \varphi)(x) := \left( \eps^2 d(\tfrac{x}{\eps}) \varphi_x \right)_x - b(\tfrac{x}{\eps}) \varphi ,\quad
& D(\cL_\eps) := \lbrace \varphi \in \rmL^2(\rr) \,|\, \cL_\eps \varphi \in \rmL^2(\rr) \rbrace  , \\
(\cL_0 \Phi)(x,y) := \left( d(y) \Phi_y \right)_y - b(y) \Phi
,\quad
& D(\cL_0) := \lbrace \Phi \in \rmL^2(\rr\times\rs) \,|\, \cL_0 \Phi \in \rmL^2(\rr\times\rs) \rbrace .
\end{array}
\end{align*}
Notice that in case (a) of Assumption \ref{assump:coeff}.\ref{assump:coeff1} with microscopic diffusion of the inhibitor, we have $D(\cL_\eps) = \rmH^2(\rr)$ and $D(\cL_0) = \rmL^2(\rr; \rmH^2(\rs))$; in case (b), $D(\cL_\eps) = \rmL^2(\rr)$ and $D(\cL_0) = \rmL^2(\rr\times\rs)$. With a slight abuse of notation, we identify the functions $U(t) \in \rmH^2(\rr)$ and $U(t,x)$, etc.

\begin{defin}
\label{defin:class_sol}
\begin{enumerate}
\item We call $(u^\eps,v^\eps)$ a classical solution of system \eqref{eq:system-eps}, if $(u_\eps,v_\eps)$ is continuous on $[0,T]$, continuously differentiable on $(0,T)$, satisfies $u^\eps(t) \in \rmH^2(\rr)$ and $v^\eps(t) \in D(\cL_\eps)$ for $0<t<T$, and solves on $[0,T]$ the equations
\begin{align*}
\begin{aligned}
& u^\eps_t  = u^\eps_{xx} + f(u^\eps) - \alpha(\tfrac{x}{\eps}) v^\eps ,
& \qquad u^\eps(0) = u^\eps_0 ,\\
& v^\eps_t  = \cL_\eps v^\eps + \beta(\tfrac{x}{\eps}) u^\eps ,
& \qquad v^\eps(0) = v^\eps_0 .
\end{aligned}
\end{align*}
\item We call $(U,V)$ a classical solution of system \eqref{eq:system-lim}, if $(U,V)$ is continuous on $[0,T]$, continuously differentiable on $(0,T)$, satisfies $U(t) \in \rmH^2(\rr)$ and $V(t) \in D(\cL_0)$ for $0<t<T$, and solves on $[0,T]$ the equations
\begin{align*}
\begin{aligned}
& U_t  = U_{xx} + f(U) - \int_0^1 \alpha(y) V(t)(x,y) \dd y ,
& \qquad U(0) = U_0 ,\\
& V_t  = \cL_0 V + \beta(y) U ,
& \qquad V(0) = V_0 .
\end{aligned}
\end{align*}
\end{enumerate}
\end{defin}

We will take initial data for $V$ in the two-scale space
\begin{align*}
\mathbb{V}_{\cL_0} := \lbrace \Phi \in D(\cL_0) \,|\, \Phi_x, \Phi_{xx} \in D(\cL_0) \rbrace \cap \rmL^\infty(\rr\times\rs) .
\end{align*}
Notice that for $d > 0$, there holds $\mathbb{V}_{\cL_0} = \rmH^2(\rr; \rmH^2(\rs))$ and all functions belonging to $\rmH^2(\rr; \rmH^2(\rs))$ are essentially bounded by the Sobolev embeddings $\rmH^2(\rr) \subset \rmL^\infty(\rr)$ and $\rmH^2(\rs) \subset \rmL^\infty(\rs)$. In contrast, for $d = 0$, we need the additional restriction to the set of bounded functions and $\mathbb{V}_{\cL_0} =  \rmH^2(\rr; \rmL^2(\rs)) \cap \rmL^\infty(\rr\times\rs)$.

\begin{assump}
\label{assump:initial-1}
\begin{enumerate}
\item The two-scale initial conditions $(U_0,V_0)$ for system \eqref{eq:system-lim} satisfy $U_0 \in \rmH^2(\rr)$ and $V_0 \in \mathbb{V}_{\cL_0}$.
\item \label{assump:initial-1b}
The one-scale initial conditions $(u^\eps_0 , v^\eps_0)$ for system \eqref{eq:system-eps} satisfy $u^\eps_0 \in \rmH^2(\rr)$ and $v^\eps_0 \in D(\cL_\eps) \cap \rmL^\infty(\rr)$, and fulfill the estimate
\begin{align*}
\exists\,C\geq 0 : \quad
\Vert u^\eps_0 - U_0 \Vert_{\rmL^2(\rr)} + \Vert v^\eps_0 - \RR V_0 \Vert_{\rmL^2(\rr)} \leq \eps C
\quad \text{for all } \eps \in (0,1] .
\end{align*}
\end{enumerate}
\end{assump}
Notice that $V_0 \in \rmC^1(\rr; \rmL^2(\rs))$, thanks to the Sobolev embedding $\rmH^2(\rr) \subset \rmC^1(\rr)$, so that $\RR V_0$ is indeed well defined.
Under the above assumptions, we obtain the existence of classical solutions via the semigroup theory.

\begin{theorem}
\label{thm:sol-exist}
Let Assumptions \ref{assump:coeff} and \ref{assump:initial-1} hold. Then the following is true.

(i) For every $T>0$ and $\eps>0$, there exists a unique classical solution $(u^\eps,v^\eps)$ of system \eqref{eq:system-eps}. Moreover,
\begin{align}
\label{eq:bound-eps}
\Vert (u^\eps, v^\eps) \Vert_{\rmC^1( [0,T]; \rmL^2(\rr))} +
\Vert ( u^\eps_x, \eps v^\eps_x) \Vert_{\rmL^2((0,T)\times\rr))} +
\Vert ( u^\eps, v^\eps) \Vert_{\rmL^\infty((0,T)\times\rr))}
\leq C
\end{align}
for some constant $C = C(T) > 0$ independent of $\eps$.

(ii) For every $T>0$, there exists a unique classical solution $(U,V)$ of the two-scale system \eqref{eq:system-lim}. In addition, the inhibitor satisfies $V \in C^0([0,T]; \mathbb{V}_{\cL_0})$.
\end{theorem}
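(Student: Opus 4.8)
\medskip
\noindent\textbf{Proof idea.}
The plan is to recast both systems as abstract semilinear parabolic equations and then combine standard local well-posedness with $\eps$-uniform a priori bounds. Writing $W=(u,v)$, system \eqref{eq:system-eps} becomes $W_t=\cA_\eps W+\cF(W)$ on $X:=\rmL^2(\rr)\times\rmL^2(\rr)$, where
\begin{align*}
\cA_\eps=\begin{pmatrix}\partial_{xx}&-\alpha(\tfrac{\cdot}{\eps})\\[2pt] \beta(\tfrac{\cdot}{\eps})&\cL_\eps\end{pmatrix},\qquad \cF(u,v)=(f(u),0),
\end{align*}
and \eqref{eq:system-lim} becomes the analogous equation on $\rmL^2(\rr)\times\rmL^2(\rr\times\rs)$ with $\cL_0$ and the bounded map $V\mapsto\int_0^1\alpha(y)V\dd y$ replacing $\cL_\eps$ and $\alpha(\tfrac{\cdot}{\eps})$. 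The diagonal part of $\cA_\eps$ is the direct sum of the heat semigroup generator and of $\cL_\eps$; in case~(a) of Assumption~\ref{assump:coeff}.\ref{assump:coeff1} the latter is sectorial, with sectoriality constants that may be taken independent of $\eps$ because $(\eps^2 d\,\varphi_x)_x$ is dissipative and the spectral gap is governed by $-b$; in case~(b) it is the bounded multiplication by $-b(\tfrac{\cdot}{\eps})$. Since the off-diagonal coupling is bounded on $X$ with norm $\le\|\alpha\|_{\rmL^\infty}+\|\beta\|_{\rmL^\infty}$, the operator $\cA_\eps$ generates an analytic semigroup with $\eps$-uniform bounds, and likewise for $\cA_0$. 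Using $\rmH^1(\rr)\hookrightarrow\rmL^\infty(\rr)$ and $f\in\rmC^1$, the nonlinearity $\cF$ is locally Lipschitz from a suitable intermediate space $X^\theta$, $\theta\in(\tfrac14,1)$, into $X$ (for the prototype $f(0)=0$; otherwise one subtracts the constant $f(0)$). Standard semilinear parabolic theory then yields, for the data $u^\eps_0\in\rmH^2(\rr)$, $v^\eps_0\in D(\cL_\eps)\cap\rmL^\infty(\rr)$, a unique local solution that is classical on its maximal interval of existence by parabolic smoothing; \eqref{eq:system-lim} is treated identically.

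To continue the local solution to all of $[0,T]$ and to establish \eqref{eq:bound-eps}, two $\eps$-uniform a priori bounds are required. The first is an $\rmL^\infty$-bound, obtained from the maximum principle: each equation of \eqref{eq:system-eps}, with the other component frozen as a forcing term, is a scalar parabolic equation (for $v^\eps$ in case~(b) even a pointwise-in-$x$ linear ODE), and here Assumption~\ref{assump:coeff}.\ref{assump:coeff2} enters, the one-sided growth bounds on $f$ providing affine super- and sub-solutions for the $u$-equation. Thus $\|u^\eps(t)\|_{\rmL^\infty(\rr)}$ is controlled through $\|u^\eps_0\|_{\rmL^\infty(\rr)}$ and $\int_0^t\|v^\eps(s)\|_{\rmL^\infty(\rr)}\dd s$, and $\|v^\eps(t)\|_{\rmL^\infty(\rr)}$ through $\|v^\eps_0\|_{\rmL^\infty(\rr)}$ and $\int_0^t\|u^\eps(s)\|_{\rmL^\infty(\rr)}\dd s$; since $\eps^2 d\ge 0$, these comparison arguments are insensitive to $\eps$. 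Substituting the two estimates into one another and invoking Gronwall's lemma gives $\|(u^\eps,v^\eps)\|_{\rmL^\infty((0,T)\times\rr)}\le C(T)$, independently of $\eps$, on the maximal existence interval. The second bound comes from energy estimates: testing the two equations with $u^\eps$ and $v^\eps$ and integrating by parts gives
\begin{align*}
\tfrac{\mathrm{d}}{\mathrm{d}t}\bigl(\|u^\eps\|_{\rmL^2}^2+\|v^\eps\|_{\rmL^2}^2\bigr)+2\|u^\eps_x\|_{\rmL^2}^2+2\bigl\|\eps\sqrt{d}\,v^\eps_x\bigr\|_{\rmL^2}^2\le C\bigl(\|u^\eps\|_{\rmL^2}^2+\|v^\eps\|_{\rmL^2}^2\bigr),
\end{align*}
the cross terms being controlled via $\|\alpha\|_{\rmL^\infty},\|\beta\|_{\rmL^\infty}$ and the $f$-term via the $\rmL^\infty$-bound just obtained; note that the diffusive term produces precisely the quantity $\|\eps v^\eps_x\|_{\rmL^2((0,T)\times\rr)}$ appearing in \eqref{eq:bound-eps} (which is vacuous when $d\equiv 0$). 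Gronwall's lemma then supplies the $\rmC^0([0,T];\rmL^2(\rr))$- and $\rmL^2$-in-space bounds, and together the $\rmL^\infty$- and $\rmL^2$-bounds dominate the $X^\theta$-norm that governs continuation, so the solution is global; higher-order energy estimates together with the equations give the $\rmC^1([0,T];\rmL^2(\rr))$-bound, and every constant is $\eps$-independent. This proves~(i).

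For~(ii) the same scheme applies verbatim to \eqref{eq:system-lim}. The additional regularity $V\in\rmC^0([0,T];\mathbb{V}_{\cL_0})$ is obtained by observing that $\cL_0$ differentiates only in $y$ and hence commutes with $\partial_x$ and $\partial_{xx}$, so the $x$-regularity contained in $V_0\in\mathbb{V}_{\cL_0}$ and in $U$ (which lies in $\rmH^2(\rr)$) is propagated by the variation-of-constants formula --- in case~(a) using the parabolic smoothing of $e^{t\cL_0}$ to accommodate that $\beta$ is merely in $\rmL^\infty(\rs)$, in case~(b) using the explicit solution of the ODE $V_t=-b(y)V+\beta(y)U$. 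The bound $V\in\rmL^\infty(\rr\times\rs)$ follows from the maximum principle for $\cL_0$ in case~(a) and from $\rmH^2(\rr)\hookrightarrow\rmL^\infty(\rr)$ applied to $U$ in case~(b), while continuity up to $t=0$ is inherited from $V_0\in\mathbb{V}_{\cL_0}$ together with the regularity of the parabolic flow (resp.\ of the ODE in case~(b)).

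The step I expect to be the main obstacle is the $\eps$-uniform $\rmL^\infty$-bound. Because the coupling $-\alpha(\tfrac{x}{\eps})v^\eps$ makes the system non-cooperative, there is in general no invariant rectangle, so an invariant-region theorem cannot be applied directly; the remedy is the decoupled comparison-and-Gronwall argument above, crucially combined with the observation that the $\eps$-dependent diffusion $(\eps^2 d\,v^\eps_x)_x$ never spoils the scalar maximum principle and therefore does not cause the bound to deteriorate as $\eps\to 0$. A secondary difficulty is that $f$ is not globally Lipschitz, which is precisely why these a priori bounds are genuinely needed --- both to rule out finite-time blow-up and to extend the local solutions to all of $[0,T]$.
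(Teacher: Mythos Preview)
Your overall strategy---semigroup theory for local existence, $\eps$-uniform $\rmL^\infty$ a~priori bounds to control the non-Lipschitz nonlinearity, then energy estimates for the remaining norms---is correct and matches the paper's architecture. The implementation, however, differs in two places worth noting. First, rather than working in fractional power spaces and invoking a local-existence-plus-continuation argument, the paper simply truncates $f$ to a globally Lipschitz $f_M$, obtains a global classical solution for each $M$ directly from \cite[Sec.~6.1]{Paz83}, and then removes the truncation once the $\rmL^\infty$ bound (uniform in $M$ and $\eps$) is in hand; this sidesteps any discussion of $X^\theta$ and of $\eps$-uniform sectoriality constants. Second, for the $\rmL^\infty$ bound itself the paper does \emph{not} use a decoupled scalar comparison argument. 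Instead, Lemma~\ref{lemma:max-bound} tests the coupled system simultaneously against $(u^\eps\pm M)_\mp$ and $(v^\eps\pm M)_\mp$ with $M(t)=Ce^{2\kappa t}$ and handles the sign-indefinite cross terms $-\alpha(\tfrac{x}{\eps})v^\eps$ and $\beta(\tfrac{x}{\eps})u^\eps$ directly within this Stampacchia-type inequality; this stays entirely inside $\rmL^2$ and avoids any subtlety about the maximum principle on the unbounded domain~$\rr$. Your route has the advantage of being more conceptual (one sees clearly why the non-cooperative coupling is harmless), while the paper's route is more self-contained and makes the $\eps$-independence manifest without appealing to comparison theorems. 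For part~(ii), your propagation-of-$x$-regularity argument via commutation of $\cL_0$ with $\partial_x$ is equivalent to the paper's finite-difference argument.
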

\begin{proof}
For arbitrary $M >0$, we define the function $f_M : \rr \to \rr$ via
\begin{align*}
f_M(u) := \left\lbrace
\begin{array}{ll}
f(-M) + f'(-M)u \quad & \text{if } u < -M , \\
f(u) & \text{if } |u| \leq M , \\
f(M) + f'(M)u & \text{if } u > M .
\end{array}\right.
\end{align*}
Notice that $f_M \in \rmC^1(\rr)$ is globally Lipschitz continuous.
Then for every $T>0$, the existence of unique classical solutions $(u^\eps_M, v^\eps_M)$ and $(U_M, V_M)$ according to Definition \ref{defin:class_sol} follows from the semigroup theory, see e.g.\ \cite[Sec.~6.1, Thm.~1.5]{Paz83}. The higher regularity $x \mapsto V(t,x,y) \in \rmH^2(\rr)$ follows by taking finite differences as in \cite[Prop.~2.3.17]{Diss-SR}. According to Lemma \ref{lemma:max-bound} and Remark \ref{rem:A1}, the solutions $u^\eps_M, v^\eps_M, U_M$ are bounded in $\rmC^0([0,T];\rmL^\infty(\rr))$ and $V_M$ in $\rmC^0([0,T]; \rmL^\infty(\rr\times\rs))$ uniformly with respect to $\eps$ and $M$. Hence, the result also holds for the unmodified function $f$.

The upper bound for $\Vert (u^\eps, v^\eps) \Vert_{\rmC^1( [0,T]; \rmL^2(\rr))} +
\Vert ( u^\eps_x, \eps v^\eps_x) \Vert_{\rmL^2((0,T)\times\rr))}$ follows from testing the equations with the solution itself and applying Gr\"onwall's Lemma, see e.g.\ \cite[Sec.~2.1.2]{Diss-SR} or \cite[Sec.~4.1]{MRT14}. The upper bound for $\Vert ( u^\eps, v^\eps) \Vert_{\rmL^\infty((0,T)\times\rr))}$ is immediate from Lemma \ref{lemma:max-bound}.
\end{proof}

Finally, we prove error estimates for the difference of the original solution $(u^\eps, v^\eps)$ and the effective solution $(U, \RR V)$, which justifies our investigation of the two-scale system \eqref{eq:system-lim} in the next section.

\begin{theorem}
\label{thm:error-est}
Let Assumptions \ref{assump:coeff} and \ref{assump:initial-1} hold.
Moreover, let $(u^\eps,v^\eps)$ and $(U,V)$ denote classical solutions of the original system \eqref{eq:system-eps} and the two-scale system \eqref{eq:system-lim}, respectively.
Then for every $T>0$, there exists a constant $C>0$ depending on $(U,V)$ but not $\eps$ such that
\begin{align}
\label{eq:est}
&\Vert u^\eps - U \Vert_{\rmC^0([0,T]; \rmL^2(\rr))} +
\Vert v^\eps - \RR V \Vert_{\rmC^0([0,T]; \rmL^2(\rr))} \leq \eps C , \\
\label{eq:est-grad}
& \Vert u^\eps_x - U_x \Vert_{\rmL^2((0,T)\times\rr))} +
\Vert \eps v^\eps_x - \RR V_y \Vert_{\rmL^2((0,T)\times\rr))}
\leq \eps C .
\end{align}
\end{theorem}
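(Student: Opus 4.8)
The proof is a Grönwall-type energy estimate for the error functions $\phi := u^\eps - U$ and $\psi := v^\eps - \RR V$; the only point that goes beyond routine parabolic estimates is the treatment of the oscillatory coupling term, which is handled by a corrector (anti-derivative) argument. First I would insert the two-scale profile into the original $v$-equation: writing $w := \RR V$, the chain rule $(\RR\Phi)_x = \RR\Phi_x + \tfrac1\eps\RR\Phi_y$ applied twice, together with $V_t = \cL_0 V + \beta(y)U$, shows after collecting terms that
\begin{align*}
w_t = \big(\eps^2 d(\tfrac x\eps) w_x\big)_x - b(\tfrac x\eps)\,w + \beta(\tfrac x\eps)\,U - \eps\, r^\eps,
\qquad r^\eps := \RR\!\big(d'(y)V_x + 2 d(y)V_{xy} + \eps\, d(y)V_{xx}\big),
\end{align*}
where $r^\eps \equiv 0$ in case (b). Since $V \in \rmC^0([0,T]; \mathbb{V}_{\cL_0})$ and $\rmH^1(\rs)\hookrightarrow\rmC^0(\rs)$, each derivative in $r^\eps$ lies in $\rmL^2(\rr;\rmC^0(\rs))$, so $\sup_{t\le T}\|r^\eps(t)\|_{\rmL^2(\rr)} \le C$ uniformly in $\eps$. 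Subtracting the equations of $(U,V)$ from those of $(u^\eps,v^\eps)$ and using the above, the pair $(\phi,\psi)$ solves
\begin{align*}
\phi_t &= \phi_{xx} + \big(f(u^\eps) - f(U)\big) - \alpha(\tfrac x\eps)\,\psi - g^\eps, \\
\psi_t &= \big(\eps^2 d(\tfrac x\eps)\psi_x\big)_x - b(\tfrac x\eps)\,\psi + \beta(\tfrac x\eps)\,\phi + \eps\, r^\eps,
\end{align*}
with $g^\eps(t,\cdot) := \RR\!\big[\alpha(\cdot)V(t,\cdot,\cdot)\big] - \int_0^1 \alpha(y)V(t,\cdot,y)\dd y$ and initial error $\phi(0) = u^\eps_0 - U_0$, $\psi(0) = v^\eps_0 - \RR V_0$, both of size $O(\eps)$ in $\rmL^2(\rr)$ by Assumption \ref{assump:initial-1}.\ref{assump:initial-1b}.

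The corrector trick handles $g^\eps$. The function $G(t,x,y) := \alpha(y)V(t,x,y) - \int_0^1 \alpha(y')V(t,x,y')\dd y'$ has vanishing $y$-average, so $N(t,x,y) := \int_0^y G(t,x,y')\dd y'$ is $\rs$-periodic and Lipschitz (hence continuous) in $y$ with $N_y = G$. Then $g^\eps = \RR G = \eps\,\partial_x(\RR N) - \eps\, \RR N_x$, and $\RR N, \RR N_x$ are bounded in $\rmL^2(\rr)$ uniformly in $\eps$ and $t\le T$, using $\|\RR\Phi\|_{\rmL^2(\rr)} \le \|\Phi\|_{\rmL^2(\rr;\rmC^0(\rs))}$, $\|N(t,x,\cdot)\|_{\rmC^0(\rs)} \le \|G(t,x,\cdot)\|_{\rmL^1(\rs)}$ and the analogous bound for $N_x$ (whose existence again follows from $V\in\rmC^0([0,T];\mathbb{V}_{\cL_0})$). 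Consequently, in the energy identity,
\begin{align*}
\Big|\int_\rr g^\eps\,\phi \dd x\Big|
= \Big| -\eps\!\int_\rr (\RR N)\,\phi_x \dd x - \eps\!\int_\rr (\RR N_x)\,\phi \dd x \Big|
\le \tfrac14\|\phi_x\|_{\rmL^2(\rr)}^2 + \tfrac12\|\phi\|_{\rmL^2(\rr)}^2 + C\eps^2,
\end{align*}
the borderline term $\phi_x$ being absorbed by the diffusion contribution $-\|\phi_x\|_{\rmL^2(\rr)}^2$.

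Then I would run the energy estimate. Testing the $\phi$-equation with $\phi$ and the $\psi$-equation with $\psi$, integrating over $\rr$ and adding, using $\int_\rr\phi_{xx}\phi = -\|\phi_x\|^2_{\rmL^2(\rr)}$ and $\int_\rr (\eps^2 d(\tfrac x\eps)\psi_x)_x\psi = -\eps^2\int_\rr d(\tfrac x\eps)|\psi_x|^2 \le -\eps^2 d_*\|\psi_x\|^2_{\rmL^2(\rr)}$ in case (a) ($\le 0$ in case (b)), the local Lipschitz bound for $f$ on the interval fixed by the uniform $\rmL^\infty$-bound of Theorem \ref{thm:sol-exist} (equivalently, working with the truncation $f_M$ from its proof), Cauchy--Schwarz and Young's inequality for the coupling terms and for $\eps\int r^\eps\psi$, and the previous estimate, one arrives at
\begin{align*}
\tfrac12\tfrac{\dd}{\dd t}\big(\|\phi\|_{\rmL^2(\rr)}^2 + \|\psi\|_{\rmL^2(\rr)}^2\big)
+ \tfrac34\|\phi_x\|_{\rmL^2(\rr)}^2 + \eps^2 d_*\|\psi_x\|_{\rmL^2(\rr)}^2
\le C\big(\|\phi\|_{\rmL^2(\rr)}^2 + \|\psi\|_{\rmL^2(\rr)}^2\big) + C\eps^2 .
\end{align*}
Discarding the two good terms and invoking Grönwall's lemma with the $O(\eps)$ initial error yields $\|\phi(t)\|_{\rmL^2(\rr)}^2 + \|\psi(t)\|_{\rmL^2(\rr)}^2 \le C\eps^2$ on $[0,T]$, which is \eqref{eq:est}. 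Integrating the displayed inequality in $t$ and using this bound gives $\int_0^T \|\phi_x\|^2_{\rmL^2(\rr)}\dd t \le C\eps^2$ and $\eps^2\int_0^T \|\psi_x\|^2_{\rmL^2(\rr)}\dd t \le C\eps^2$; since $\eps v^\eps_x - \RR V_y = \eps\psi_x + \eps\,\RR V_x$ with $\eps\|\RR V_x\|_{\rmL^2((0,T)\times\rr)} \le \eps\|V_x\|_{\rmL^2((0,T)\times\rr;\,\rmC^0(\rs))} = O(\eps)$, estimate \eqref{eq:est-grad} follows. (In case (b) all $d$-terms are absent and the $\psi$-equation becomes an ODE in $t$, so the argument only simplifies.)

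The crux is the corrector step: $g^\eps$ converges to $0$ only weakly in $\rmL^2(\rr)$, so no pointwise-in-time bound $\|g^\eps(t)\|_{\rmL^2(\rr)} = O(\eps)$ holds; one must use its zero $y$-mean to trade one power of $\eps$ for an $x$-derivative that is then paired against $\phi_x$ and absorbed by parabolic smoothing. The accompanying bookkeeping — checking that $N$ and $N_x$ have enough joint $(x,y)$-regularity for $\RR$ to be applied and for all constants to be uniform in $\eps$ and $t\in[0,T]$, which is precisely where $V\in\rmC^0([0,T];\mathbb{V}_{\cL_0})$ and $\rmH^1(\rs)\hookrightarrow\rmC^0(\rs)$ enter — is the remaining, purely technical, part.
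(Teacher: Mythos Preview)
Your proof is correct and follows the same overall architecture as the paper's: derive an equation for the error pair $(\phi,\psi)$, test with $(\phi,\psi)$, absorb the cross terms by Young's inequality and the oscillatory coupling term by parabolic smoothing, and close with Gr\"onwall. The treatment of the diffusion remainder $r^\eps$ (the paper's $\Delta^\eps$) and the use of the uniform $\rmL^\infty$-bound to linearize $f$ are identical in spirit.

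The one genuine difference is how you handle the oscillatory term $g^\eps = \RR(\alpha V) - \int_0^1 \alpha V\,\dd y$. The paper invokes a separate dual-norm lemma (its Lemma~A.2), which shows $\|g^\eps\|_{\rmH^1(\rr)^*} \le \eps C$ by localising to cells $[\eps n,\eps(n+1)]$, changing variables, and using the fundamental theorem of calculus in the $x$-direction; the pairing with $\phi\in\rmH^1(\rr)$ then produces the $\|\phi\|_{\rmH^1}^2$ term to be absorbed. You instead use the classical corrector trick: exploit the zero $y$-mean of $G=\alpha V-\overline{\alpha V}$ to build a periodic antiderivative $N$ with $N_y=G$, write $g^\eps = \eps\,\partial_x(\RR N) - \eps\,\RR N_x$, and integrate by parts directly. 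Both arguments express the same mechanism---trading the zero mean for a factor $\eps$ at the cost of one $x$-derivative---but yours is more explicit and self-contained (no auxiliary lemma), while the paper's packaging into an $\rmH^1(\rr)^*$ estimate is reusable and arguably cleaner in terms of regularity bookkeeping, since it only needs $\alpha V\in\rmH^1(\rr;\rmL^2(\rs))$ rather than pointwise $\rmC^0(\rs)$ control of $N$.
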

\begin{proof}
For brevity, we write the coefficients as $\alpha_\eps(x) := \alpha(\tfrac{x}{\eps})$, etc.
Subtracting the equations for $u^\eps$ and $U$ and respectively $v^\eps$ and $V$ in \eqref{eq:system-eps} and \eqref{eq:system-lim}, testing with $u^\eps - U$, respectively $v^\eps - \RR V$, and integrating over $\rr$ yields for all $t \in [0,T]$
\begin{align}
\label{eq:error-1a}
\int_\rr (u^\eps - U)_t (u^\eps - U) \dd x
& = \int_\rr \bigg\lbrace (u^\eps - U)_{xx} (u^\eps - U) + [f(u^\eps) - f(U)](u^\eps - U) \nonumber \\
& \hspace{4em} - \bigg[ \alpha_\eps v^\eps - \int_0^1 \alpha(y) V(t,x,y)\dd y \bigg] (u^\eps - U) \bigg\rbrace \dd x
\end{align}
as well as
\begin{align}
\label{eq:error-2a}
\int_\rr (v^\eps - \RR V)_t (v^\eps - \RR V) \dd x
& = \int_\rr \bigg\lbrace  \big( ( \eps^2 d_\eps v^\eps_x )_x - \RR [ (d V_y)_y] \big) (v^\eps - \RR V) \nonumber \\
& \hspace{2em} - b_\eps |v^\eps - \RR V |^2
+ \beta_\eps (u^\eps - U) (v^\eps - \RR V )  \bigg\rbrace \dd x .
\end{align}
In case (a) of Assumption \ref{assump:coeff}.\ref{assump:coeff1}, using the relation $\eps (\RR V)_x = \RR (\eps V_x + V_y)$, we obtain
\begin{align*}
& \left( \eps^2 d_\eps (\RR V)_x \right)_x
 = \RR [( d V_y)_y ]
+ \Delta^\eps , \\
& \Delta^\eps  := \eps \RR [(d V_y)_x]
+ \eps \RR [(d V_x)_y]
+ \eps^2 \RR[(d V_x)_x]
\end{align*}
with $d = d(y)$ and by Theorem \ref{thm:sol-exist} (ii), we find the upper bound
\begin{align}
\label{eq:error-Delta}
\Vert \Delta^\eps \Vert_{\rmL^2(\rr)}
\leq \eps C_1(t)
\quad\text{with}\quad
C_1(t) := \Vert d \Vert_{\rmC^1(\rs)} \Vert V(t,\cdot,\cdot)\Vert_{\rmH^2(\rr; \rmL^2(\rs)) \cap \rmH^1(\rr; \rmH^1(\rs))} .
\end{align}
In case (b) of Assumption \ref{assump:coeff}.\ref{assump:coeff1}, we set $C_1(t) : = 0$.

Applying partial integration with the boundary conditions
\begin{align*}
\lim_{x \to\pm\infty} u^\eps_x(t,x) = 0, \quad
\lim_{x \to\pm\infty} v^\eps_x(t,x) = 0, \quad
\lim_{x \to\pm\infty} U_x(t,x) = 0, \quad
\lim_{x\to\pm\infty}  V_x(t,x,y) = 0 ,
\end{align*}
for all $t \in [0,T]$ and almost all $y \in \rs$,
and the chain rule $\frac12\frac{\dd}{\dd t} \Vert u \Vert^2_{\rmL^2(\rr)} = \int_{\rr} \dot{u} u \dd x$, we see that the two equations \eqref{eq:error-1a} and \eqref{eq:error-2a} take the form
\begin{align}
\label{eq:error-1b}
\frac12 \frac{\dd}{\dd t} \Vert u^\eps - U \Vert^2_{\rmL^2(\rr)}
& = \int_\rr \bigg\lbrace - |u^\eps_x - U_x|^2 + [f(u^\eps) - f(U)](u^\eps - U) \nonumber \\
& \hspace{6em}- \bigg[ \alpha_\eps v^\eps - \int_0^1 \alpha(y) V(t,x,y)\dd y \bigg] (u^\eps - U) \bigg\rbrace \dd x
\end{align}
as well as
\begin{align}
\label{eq:error-2b}
\frac12 \frac{\dd}{\dd t} \Vert v^\eps - \RR V \Vert^2_{\rmL^2(\rr)}
& = \int_\rr \bigg\lbrace  - \eps^2 d_\eps |v^\eps_x - (\RR V)_x|^2
- \Delta^\eps (v^\eps - \RR V) \nonumber \\
& \hspace{5em} - b_\eps |v^\eps - \RR V |^2
+ \beta_\eps (u^\eps - U) (v^\eps - \RR V)  \bigg\rbrace \dd x .
\end{align}
Applying H\"older's and Young's inequality gives
\begin{align}
\label{eq:error-Delta2}
\int_\rr \big| \Delta^\eps (v^\eps - \RR V) \big| \dd x
\leq \frac12 \Vert \Delta^\eps \Vert^2_{\rmL^2(\rr)}
+ \frac12 \Vert v^\eps - \RR V \Vert^2_{\rmL^2(\rr)} .
\end{align}
According to Lemma \ref{lemma:eck} we have for the dual norm
\begin{align}
\label{eq:error-meanVal}
\begin{aligned}
&\Vert \RR (\alpha V) - \textstyle\int_0^1 \alpha(y) V(t,x,y) \dd y \Vert_{\rmH^1(\rr)^*}
\leq \eps C_2(t) \\
& \text{with} \quad
C_2(t) := \Vert \alpha V(t, \cdot, \cdot) \Vert_{\rmH^1(\rr; \rmL^2(\rs))} .
\end{aligned}
\end{align}
Using $\RR(\alpha V) = \alpha_\eps \,(\RR V)$ and \eqref{eq:error-meanVal}, we obtain with H\"older's and Young's inequality
\begin{align}
\label{eq:error-meanVal2}
& \left\vert \int_\rr  \left[ \alpha_\eps v^\eps - \RR (\alpha V) + \RR (\alpha V) - \int_0^1 \alpha(y) V(t,x,y)\dd y \right] (u^\eps - U) \dd x \right\vert \nonumber \\
& \leq \frac12  \Vert \alpha \Vert_{\rmL^\infty(\rs)} \left( \Vert v^\eps - \RR V \Vert^2_{\rmL^2(\rr)} + \Vert u^\eps - U \Vert^2_{\rmL^2(\rr)} \right)
+ \eps^2 \frac12 C_2(t) + \frac12 \Vert u^\eps - U \Vert^2_{\rmH^1(\rr)} .
\end{align}
Using the uniform $\rmL^\infty(\rr)$-bound for $u^\eps, U$ and arguing as in the proof of Theorem \ref{thm:sol-exist}, we can consider $f$ to be globally Lipschitz continuous. Adding \eqref{eq:error-1b} and \eqref{eq:error-2b}, recalling that $d(y) \geq d_* > 0$, and using \eqref{eq:error-Delta}, \eqref{eq:error-Delta2}, and \eqref{eq:error-meanVal2}, we arrive at
\begin{align}
\label{eq:error-3}
& \frac12 \frac{\dd}{\dd t} \left\lbrace \Vert u^\eps - U \Vert^2_{\rmL^2(\rr)}
+ \Vert v^\eps - \RR V \Vert^2_{\rmL^2(\rr)} \right\rbrace +
\Vert u^\eps_x - U_x \Vert^2_{\rmL^2(\rr)} +
\eps^2 d_* \Vert v^\eps_x - (\RR V)_x \Vert^2_{\rmL^2(\rr)} \nonumber \\
& \leq L(t) \left\lbrace \Vert u^\eps - U\Vert^2_{\rmL^2(\rr)} +
\Vert v^\eps - \RR V \Vert^2_{\rmL^2(\rr)}
 + \eps^2 \big( (C_1(t))^2 + (C_2(t))^2 \big) \right\rbrace ,
\end{align}
where $L(t)>0$ depends on the Lipschitz properties of $f$, the upper bound of $\Vert u^\eps (t) \Vert_{\rmL^\infty(\rr)} + \Vert U (t) \Vert_{\rmL^\infty(\rr)}$ in Lemma \ref{lemma:max-bound} and Remark \ref{rem:A1}, as well as $\max \lbrace \Vert \alpha \Vert_{\rmL^\infty(\rs)}, \Vert \beta \Vert_{\rmL^\infty(\rs)}, \Vert b \Vert_{\rmL^\infty(\rs)} \rbrace$.
Applying Gr\"onwall's Lemma with Assumption \ref{assump:initial-1}.\ref{assump:initial-1b} for the initial conditions gives for all $t \geq 0$
\begin{align}
\label{eq:error-4}
\max_{0 \leq t\leq T} \left\lbrace
\Vert u^\eps(t) - U(t) \Vert^2_{\rmL^2(\rr)}
+ \Vert v^\eps(t) - \RR V(t) \Vert^2_{\rmL^2(\rr)} \right\rbrace
\leq \eps^2 C_3(t)e^{\int_0^t L(\tau) \dd\tau} ,
\end{align}
where $C_3 (t) >0 $ is bounded on $[0,T]$ and independent of $\eps$.
Hence, estimate \eqref{eq:est} follows by choosing $t = T$ on the right-hand side in \eqref{eq:error-4} and taking the square root.
Moreover, integrating \eqref{eq:error-3} over $[0,T]$ gives with \eqref{eq:est} the gradient estimate \eqref{eq:est-grad}.
\end{proof}

\begin{rem}
Let us introduce the periodic unfolding operator $\cT_\eps : \rmL^2(\rr) \to \rmL^2(\rr\times \rs)$ following \cite{CDG02}
\begin{equation*}
(\cT_\eps v)(x,y) : = v \left( \eps [\tfrac{x}{\eps}] + \eps y \right) ,
\end{equation*}
where $[x] \in \mathbb{Z}$ denotes the integer part of $x\in\rr$. Noting that $(\cT_\eps \RR V)(x,y) = V(\eps [\tfrac{x}{\eps}] + \eps y , y)$ and $x \mapsto V(x,y)$ is Lipschitz continuous, yields the equivalence
\begin{equation*}
\Vert v^\eps - \RR V \Vert_{\rmL^2(\rr)} \leq \eps C
\qquad\Longleftrightarrow\qquad
\Vert \cT_\eps v^\eps - V \Vert_{\rmL^2(\rr\times\rs)} \leq \eps C .
\end{equation*}
In particular, \eqref{eq:est} implies that the inhibitor $v^\eps$ converges to $V$ strongly in the two-scale sense according to the definition of two-scale convergence in \cite{MT07}. In the same manner, \eqref{eq:est-grad} yields the strong two-scale convergence of $\eps v^\eps_x$ to $\nabla_y V$.
\end{rem}

\section{Pulses in the two-scale system}
\label{sec:pulses}

We seek solutions $(U ,V)$ of the two-scale system \eqref{eq:system-lim} that are frame invariant with respect to the co-moving frame $z = x + \bc t$ such that
\begin{align*}
U(t,x) = \bu (x + \bc t)
\quad\text{and}\quad
V(t,x,y) = \bv (x + \bc t,y) ,
\end{align*}
where $\bc \geq 0$ denotes the constant wave speed.
Inserting this ansatz into \eqref{eq:system-lim} yields the nonlocally coupled system of an ODE and a PDE

\begin{subequations}
\label{eq:ComovingSystem-sub}
\begin{equation}
\label{eq:ComovingSystem}
\tag{\ref{eq:ComovingSystem-sub}.Co-S$_0$}
\begin{aligned}
\bc\bu' & =  \bu'' + f(\bu) - \int_0^1 \alpha(y) \bv(\cdot,y) \dd y , \\
\bc\bv' & =  - \cL \bv + \beta(y) \bu ,
\end{aligned}
\end{equation}
\end{subequations}
where $\bu' = \bu_z$.
The differential operator $\cL: D(\cL) \to \rmL^2(\rs)$ is given via
$$
(\cL \varphi)(y):= - (d(y) \varphi_y)_y + b(y)\varphi
\quad\text{and}\quad
D(\cL):=\{\varphi\in \rmL^2(\rs) \,|\, \cL\varphi\in \rmL^2(\rs) \} .
$$
We denote by $\|\cdot\|_{D(\cL)}$ the graph norm and by $\spec(\cL)$ the spectrum of~$\cL$.
The unknowns of our pulse solution in demand are
\begin{equation}
\label{eq:Unknowns}
\bc\geq 0,\quad \bu:\rr\to\rr,\quad \bv:\rr\times\rs \to \rr .
\end{equation}
\begin{defin}
\label{defin:PulseTS}
The triple $(\bc,\bu (x + \bc t), \bv (x + \bc t,y))$ is called a two-scale pulse solution of the two-scale system~\eqref{eq:system-lim} if $\bu\in \rmC^2(\rr)$, $\bv\in \rmC^0 (\rr;D(\cL))\cap \rmC^1(\rr; \rmL^2(\rs))$, equations~\eqref{eq:ComovingSystem} hold, and $(\bu,\bv)$ is a homoclinic orbit of \eqref{eq:ComovingSystem}, i.e.,
\begin{equation}\label{eqHomoclinicH1}
\lim\limits_{z\to\pm\infty} \bu(z)=0,\quad \lim\limits_{z\to\pm\infty}\|\bv(z,\cdot)\|_{D(\cL)}=0.
\end{equation}
\end{defin}
Throughout this section, we assume the following.
\begin{assump}
\label{assump:EllipticOrNot}
There holds $0\notin\spec(\cL)$. If $d(y) \equiv 0$, then $b(y ) \equiv b_0$ for some $b_0 > 0$.
\end{assump}
Assumptions \ref{assump:coeff}.\ref{assump:coeff1} and \ref{assump:EllipticOrNot} together imply that the spectrum of $\cL$ is discrete and we can find a spectral gap around zero.

\subsection{Existence of two-scale pulse solutions}
\label{subsec:pulse-exist}

In this section, we provide sufficient conditions under which pulse solutions exist and are determined by what we will call a {\em guiding system} of finitely many ODEs.
Our main assumptions that allow us to reduce the nonlocally coupled PDE system~\eqref{eq:ComovingSystem} to a system of  ODEs are as follows.
\begin{assump}
\label{assump:AlphaEigenfunction}
The function $\alpha(y)$ is a finite sum of eigenfunctions of the operator $\cL$, i.e., there exist $m \geq 1$, $\widetilde\alpha_i \in D(\cL)$, and $\lambda_i\in\rr$ such that $\widetilde{\alpha}_1, ..., \widetilde{\alpha}_m$ are linearly independent and
\begin{align*}
\alpha (y) = \sum_{i=1}^m \widetilde\alpha_i (y)
\qquad\text{with}\qquad
\cL \widetilde \alpha_i=\lambda_i \widetilde\alpha_i .
\end{align*}
To be definite, we assume that $\lambda_i>0$.
\end{assump}
Notice that the eigenvalues $\lambda_i$ in Assumption \ref{assump:AlphaEigenfunction} are not assumed to be simple.
With this, we introduce the new parameters (for $i = 1,...,m$)
\begin{equation}
\label{eq:Alpha*Beta*}
\alpha_i:=\| \widetilde\alpha_i\|_{\rmL^2(\rs)}
\quad\text{and}\quad
\beta_i:=\dfrac{(\beta, \widetilde\alpha_i)_{\rmL^2(\rs)}}{\| \widetilde\alpha_i\|_{\rmL^2(\rs)}}.
\end{equation}

\begin{assump}
\label{assump:GuidingSystem}
The ODE system
\begin{subequations}
\label{eq:ComovingSystemGuiding-sub}
\begin{equation}
\label{eq:ComovingSystemGuiding}
\tag{\ref{eq:ComovingSystemGuiding-sub}.GS}
\begin{aligned}
c u'&=u'' + f(u) - \sum_{i=1}^m \alpha_i v_i , \\
c v_i'&=-\lambda_i v_i + \beta_i u, \hspace{4em}  i=1,...,m,
\end{aligned}
\end{equation}
\end{subequations}
where $\alpha_i,\beta_i,\lambda_i \in\rr$ are given by~\eqref{eq:Alpha*Beta*} and Assumption~\ref{assump:AlphaEigenfunction}, admits a homoclinic orbit $(c,u,v_1,...,v_m)$ satisfying
\begin{equation}
\label{eq:UnknownsGuiding}
c\ge 0
\quad\text{and}\quad
u,v_i\in \rmC^\infty(\rr).
\end{equation}
Moreover, there exists $\sigma > 0$ such that
\begin{equation}
\label{eq:HomoclinicGuiding}
\lim\limits_{z\to\pm\infty}e^{\sigma |z|} u(z)=0,\quad \lim\limits_{z\to\pm\infty}e^{\sigma |z|} u'(z)=0, \quad \lim\limits_{z\to\pm\infty} v_i(z)=0 .
\end{equation}
\end{assump}

We will refer to system~\eqref{eq:ComovingSystemGuiding} as to the {\em guiding system}.

\begin{rem}
\label{rem:param}
\begin{enumerate}
\item
System \eqref{eq:ComovingSystemGuiding} is known to possess a homoclinic orbit, e.g., for cubic functions $f$ as in \eqref{eq:f} and certain parameter sets $(\alpha_i,\beta_i,\lambda_i)_{i=1}^m$, c.f.\ \cite{Carp1977} on ``pulses in systems with $l$ fast and $m$ slow equations''. Typically, these parameters are within the range
\begin{align*}
\alpha_i = 1, \qquad 0 < \beta_i \ll 1, \qquad 0 \leq \lambda_i \ll 1 ,
\end{align*}
see also the numerical examples in Section \ref{sec:numerics}.
\item \label{rem:sign}
Interestingly, neither $\alpha(y)$, $\beta(y)$, nor their product $\alpha(y) \beta(y)$ need to be sign preserving. Moreover, the case $\int_0^1\alpha(y)\dd y=0$ is {\em not} excluded in general, unless $\int_0^1 \alpha(y)\beta(y) \dd y = 0$. In the latter case, $\beta_i=0$ and the system~\eqref{eq:ComovingSystemGuiding} decouples and has no homoclinics.
The former case is exemplarily treated in Section \ref{subsec:two-alpha}.
\item Let $b(y) \equiv b_0$. Then, the two-scale system~\eqref{eq:ComovingSystem} takes the form
\begin{equation}
\label{eq:system_b0}
\begin{aligned}
\bc \bu'& = \bu'' + f(\bu) - \int_0^1 \alpha(y) \bv(z,y)\dd y,\\
\bc \bv'& = -b_0 \bv + \beta(y) \bu.
\end{aligned}
\end{equation}
Obviously, \emph{any} $\alpha\in \rmL^2(\rs)$ satisfies Assumption~\ref{assump:AlphaEigenfunction} with $\lambda_0=b_0$. This situation is illustrated by a numerical example in Section \ref{subsec:jumps}.
\end{enumerate}
\end{rem}

The main result of this paper is the following theorem.

\begin{theorem}
\label{thm:PDEPulse}
  Let Assumptions~\ref{assump:coeff}, \ref{assump:EllipticOrNot}, \ref{assump:AlphaEigenfunction}, and~\ref{assump:GuidingSystem} hold. Then the two-scale system \eqref{eq:system-lim} admits a pulse solution $(\bc, \bu, \bv)$ such that
the pair $(\bc,\bu) = (c,u)$ is the same as in~\eqref{eq:UnknownsGuiding} and $\bv$ satisfies the estimate
  \begin{equation}
  \label{eq:PDEPulseVEstimate}
    \|\bv(z,\cdot)\|_{D(\cL)}+\|\bv_z(z,\cdot)\|_{\rmL^2(\rs)} \leq C e^{-\gamma|z|}
    \qquad\text{for }  z\in\rr,
  \end{equation}
where $C,\gamma>0$ do not depend on $z\in\rr$. Moreover, if $\beta \in D(\cL)$, then
  \begin{equation}
  \label{eq:PDEPulseVEstimate2}
	\| \cL \bv(z,\cdot) \|_{D(\cL)} +
     \|\bv_z(z,\cdot)\|_{D(\cL)} \leq C e^{-\gamma|z|}
    \qquad\text{for }  z\in\rr .
  \end{equation}
\end{theorem}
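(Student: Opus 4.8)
The plan is to keep $(\bc,\bu)=(c,u)$ equal to the pair from the guiding system~\eqref{eq:UnknownsGuiding}, and to produce $\bv$ as the unique bounded solution of the linear equation $\bc\bv'=-\cL\bv+\beta(y)\bu$ on $\rr\times\rs$, decomposed according to the spectral subspace of $\cL$ that carries $\alpha$. By Assumptions~\ref{assump:coeff} and~\ref{assump:EllipticOrNot}, $\cL$ is self-adjoint on $\rmL^2(\rs)$ and bounded below, with discrete spectrum (compact resolvent in case~(a), while in case~(b) it is simply $b_0\,\mathrm{Id}$); since $0\notin\spec(\cL)$ there is $\delta>0$ with $|\mu|\ge\delta$ for every $\mu\in\spec(\cL)$. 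Let $E:=\Span\{\widetilde\alpha_1,\dots,\widetilde\alpha_m\}$, a finite-dimensional $\cL$-invariant subspace with $\spec(\cL|_E)=\{\lambda_i\}\subset(0,\infty)$ and $\alpha\in E$, and let $P$ be the $\rmL^2(\rs)$-orthogonal projection onto $E$. As $E$ is spanned by eigenfunctions, $P$ commutes with $\cL$, and $E^\perp:=(I-P)\rmL^2(\rs)$ is $\cL$-invariant with $\spec(\cL|_{E^\perp})\subset\{|\mu|\ge\delta\}$. Applying $P$ and $I-P$ splits the $\bv$-equation into
\begin{equation*}
\bc\bv_E'=-\cL\bv_E+(P\beta)u \text{ on } E, \qquad
\bc\bv_\perp'=-\cL\bv_\perp+((I-P)\beta)u \text{ on } E^\perp ,
\end{equation*}
and it suffices to construct $\bv_E$ and $\bv_\perp$ separately with the required decay and put $\bv:=\bv_E+\bv_\perp$.

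On the finite-dimensional space $E$ the operator $-\cL|_E$ has spectrum $\{-\lambda_i\}\subset(-\infty,0)$, and the forcing $(P\beta)u$ decays exponentially by~\eqref{eq:HomoclinicGuiding}; hence the first equation has a unique bounded solution $\bv_E$ (by variation of constants for $c>0$, or $\bv_E=(\cL|_E)^{-1}(P\beta)u$ for $c=0$), and $\bv_E$ together with $\bv_E'$ decays exponentially in the graph norm of $\cL$ — being a combination of the fixed eigenfunctions $\widetilde\alpha_i$ with coefficients that inherit exponential decay from $u$ (note $v_i$ and $v_i'$ decay exponentially, from $cv_i'=-\lambda_i v_i+\beta_i u$ with $\lambda_i>0$ and~\eqref{eq:HomoclinicGuiding}). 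To see that $\bv_E$ reproduces the guiding system, set $w_i(z):=\alpha_i^{-1}(\widetilde\alpha_i,\bv_E(z))_{\rmL^2(\rs)}$, which is bounded because $\bv_E$ is; using self-adjointness of $\cL$, $\cL\widetilde\alpha_i=\lambda_i\widetilde\alpha_i$ and $P\widetilde\alpha_i=\widetilde\alpha_i$, one computes $cw_i'=-\lambda_i w_i+\beta_i u$, the same scalar equation satisfied by $v_i$ in~\eqref{eq:ComovingSystemGuiding}. Since $\lambda_i>0$ forces uniqueness of the bounded solution of that equation, $w_i\equiv v_i$, so $\int_0^1\alpha(y)\bv_E(z,y)\dd y=\sum_{i=1}^m(\widetilde\alpha_i,\bv_E(z))_{\rmL^2(\rs)}=\sum_{i=1}^m\alpha_i v_i(z)$; as moreover $\int_0^1\alpha(y)\bv_\perp(z,y)\dd y=(\alpha,\bv_\perp(z))_{\rmL^2(\rs)}=0$ because $\alpha\in E$, the first line of~\eqref{eq:ComovingSystemGuiding} becomes exactly the first equation of~\eqref{eq:ComovingSystem} for $(\bc,\bu,\bv)$.

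For $\bv_\perp$ I would diagonalise $\cL$ on $E^\perp$, expand $(I-P)\beta=\sum_\mu g_\mu e_\mu$ with $\cL e_\mu=\mu e_\mu$, and solve the decoupled scalar equations $c\psi_\mu'=-\mu\psi_\mu+g_\mu u$ by variation of constants (integrating from $-\infty$ if $\mu>0$ and from $+\infty$ if $\mu<0$, or $\psi_\mu=g_\mu u/\mu$ if $c=0$): this gives the unique bounded solution and the bound $|\psi_\mu(z)|+|\mu\psi_\mu(z)|+|\psi_\mu'(z)|\le C|g_\mu|\,e^{-\gamma|z|}$ with $\gamma>0$ independent of $\mu$ — one takes $\gamma<\min\{\sigma,\delta/c\}$ for $c>0$ and $\gamma=\sigma$ for $c=0$, so that the finitely many possible resonances $\mu/c=\sigma$ cost only an arbitrarily small part of the decay rate. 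Because $\sum_\mu|g_\mu|^2=\|(I-P)\beta\|_{\rmL^2(\rs)}^2<\infty$ (as $\beta\in\rmL^\infty(\rs)\subset\rmL^2(\rs)$), the series $\bv_\perp(z,y)=\sum_\mu\psi_\mu(z)e_\mu(y)$, and its termwise $z$-derivative, converge uniformly in $z$ — in $D(\cL)$ and in $\rmL^2(\rs)$ respectively — so $\bv_\perp\in\rmC^0(\rr;D(\cL))\cap\rmC^1(\rr;\rmL^2(\rs))$ and $\|\bv_\perp(z,\cdot)\|_{D(\cL)}+\|\bv_{\perp,z}(z,\cdot)\|_{\rmL^2(\rs)}\le Ce^{-\gamma|z|}$, a regularity $\bv_E$ shares trivially. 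Adding the bounds for $\bv_E$ and $\bv_\perp$ gives~\eqref{eq:PDEPulseVEstimate} and shows that $(\bc,\bu,\bv)=(c,u,\bv_E+\bv_\perp)$ is a two-scale pulse in the sense of Definition~\ref{defin:PulseTS}. If in addition $\beta\in D(\cL)$, then $\sum_\mu\mu^2|g_\mu|^2=\|\cL(I-P)\beta\|_{\rmL^2(\rs)}^2<\infty$, so the same series converges, still with exponential decay, in the stronger norm controlling $\|\cL\bv_\perp(z,\cdot)\|_{D(\cL)}$ and $\|\bv_{\perp,z}(z,\cdot)\|_{D(\cL)}$, which together with the automatic bound for $\bv_E$ gives~\eqref{eq:PDEPulseVEstimate2}.

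I expect the main obstacle to be the construction on $E^\perp$: getting the decay bounds for the scalar profiles $\psi_\mu$ with constants uniform in the eigenvalue $\mu$ (handling the resonances $\mu/c=\sigma$ and the limit $\mu\to\infty$), verifying convergence of the series for $\bv_\perp$ in $D(\cL)$ and, under $\beta\in D(\cL)$, in the graph norm of $\cL^2$, and carrying the cases $c=0$ and $c>0$ through in parallel. By contrast, the algebraic core that links $\bv_E$ to the guiding system — the identity $w_i\equiv v_i$ — is short once self-adjointness of $\cL$ and uniqueness of bounded solutions are available.
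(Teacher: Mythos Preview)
Your strategy is correct and would yield a complete proof, but it differs from the paper's in how the complement $E^\perp$ is handled. The paper does \emph{not} diagonalise $\cL$ completely on $E^\perp$; instead it splits $E^\perp$ only into the two spectral blocks $Y_+$ and $Y_-$ (positive and negative spectrum) and works with the analytic semigroups $e^{-\cL_\pm t}$ directly. The $\rmL^2(\rs)$ decay of $\bv_\pm$ comes from the operator bound $\|e^{-\cL_\pm t}\|\le C_1 e^{\mp\sigma_\pm t}$, while the $D(\cL)$ bound on $\bv_+$ is obtained by writing $\bv_+=\bar\bv_1+\bar\bv_2$ with $\bar\bv_1(z)=\tfrac1c\int_{-\infty}^z e^{-\cL_+(z-\xi)/c}\beta_+[\bu(\xi)-\bu(z)]\dd\xi$ and $\bar\bv_2(z)=\cL_+^{-1}\beta_+\bu(z)$, and applying the smoothing estimate $\|\cL_+ e^{-\cL_+ t}\|\le C_1 t^{-1}e^{-\sigma_+ t}$ together with the Lipschitz bound $|\bu(z)-\bu(\xi)|\le C|z-\xi|e^{-\sigma\min(|z|,|\xi|)}$ to control $\cL_+\bar\bv_1$. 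Your mode-by-mode route is more elementary---it avoids semigroup theory entirely and reduces everything to scalar ODEs plus Parseval---and makes the $\beta\in D(\cL)$ upgrade transparent (just multiply by $\mu$), at the price of the uniformity-in-$\mu$ verification you flag; the cleanest way to get $|\mu\psi_\mu(z)|\le C|g_\mu|e^{-\gamma|z|}$ uniformly for large $\mu$ is one integration by parts in the variation-of-constants formula, using the exponential decay of $u'$ from~\eqref{eq:HomoclinicGuiding}. The paper's semigroup argument, by contrast, never needs uniformity over individual eigenvalues and would carry over more readily to settings where $\cL$ is merely sectorial rather than self-adjoint with an explicit eigenbasis.
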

\begin{proof}
The proof is based on the spectral decomposition of the space $\rmL^2(\rs)$ to recover the guiding system and semigroup properties to derive the exponential decay in \eqref{eq:PDEPulseVEstimate}--\eqref{eq:PDEPulseVEstimate2}.

\emph{Step 1: spectral decomposition.} Under Assumption~\ref{assump:EllipticOrNot}, $\cL$ is a sectorial self-adjoint operator. Its spectrum is bounded from below and consists of isolated real eigenvalues, which admit possible multiple geometric multiplicity. The corresponding eigenfunctions form a basis for $\rmL^2(\rs)$. We denote by $e^{-\cL t}$, $t\ge 0$, the analytic semigroup in $\rmL^2(\rs)$ generated by $\cL$.

Set $\Sigma_-:=\spec(\cL)\cap\{\lambda<0\}$ and $\Sigma_+:=\spec(\cL)\cap\{\lambda>0\}$. Let $\cP_i$ be the orthogonal projector onto the eigenspace $\Span(\widetilde\alpha_i)$, $i = 1,...,m$, $\cP_-$ onto the eigenspace corresponding to $\Sigma_-$ and $\sum_{i=1}^m \cP_i + \cP_+$ onto the eigenspace corresponding to $\Sigma_+$. Set $Y_i:=\cP_i(\rmL^2(\rs))$,  $Y_-=\cP_-(\rmL^2(\rs))$, and $Y_+ = \cP_+(\rmL^2(\rs))$. The spaces $Y_i$, $Y_+$, and $Y_-$ are pairwise orthogonal and invariant under $\cL$. Moreover, $Y_i$ and $Y_-$ are finite-dimensional. By Assumption~\ref{assump:AlphaEigenfunction}, the restriction of $\cL$ onto $Y_i$ is a multiplication by $\lambda_i$. Let $\cL_\pm$ denote the restrictions of $\cL$ onto $Y_\pm$. Then, we have (cf.~\cite[Sec.~1.5]{Hen81})
$$
\begin{aligned}
&\cL_-:Y_-\to Y_-\ \text{is bounded, } & &\spec(\cL_-)=\Sigma_-,\\
&D(\cL_+)=D(\cL)\cap Y_+, & &\spec(\cL_+) \subseteq \Sigma_+.
\end{aligned}
$$
Notice that eigenvalues $\lambda_i$ may but need not belong to $\spec(\cL_+)$.
Moreover, due to Assumption~\ref{assump:EllipticOrNot}, there exists $\sigma_\pm > 0$ such that $\Sigma_-$ is below $-\sigma_-$ and $\Sigma_+$ is above $\sigma_+$. Therefore, there exists $C_1>0$ such that
\begin{equation}
\label{eq:ExponentialEstimates}
\begin{aligned}
  \|e^{-\cL_- t}\|_{Y_-}& \le C_1 e^{\sigma_- t}, & & t\leq 0,\\
  \|e^{-\cL_+ t}\|_{Y_+}& \le C_1 e^{-\sigma_+ t}, & & t > 0,
\end{aligned}
\end{equation}
as well as
\begin{equation}
\label{eq:ExponentialEstimatesL}
\begin{aligned}
    \|\cL_+ e^{-\cL_+ t}\|_{Y_+}\le C_1 t^{-1} e^{-\sigma_+ t},\quad t> 0.
\end{aligned}
\end{equation}

\emph{Step 2: orthogonal projection.} Further in the proof, we assume that $c > 0$ in Assumption \ref{assump:GuidingSystem}, whereas the modifications for the case $c = 0$ are obvious.
We will show that the pulse solution for the two-scale system~\eqref{eq:ComovingSystem} is given by $(\bc,\bu,\bv)$, where $(\bc,\bu) = (c,u)$ are as in \eqref{eq:UnknownsGuiding}, and the $\bv$-component is represented via
\begin{equation}
\label{eq:VSeries}
\bv(z,\cdot) = \sum_{i=1}^m \bv_i(z) \cdot \dfrac{\widetilde\alpha_i(\cdot)}{\alpha_i}
+ \bv_+(z) + \bv_-(z)
\qquad\text{for } z \in \rr ,
\end{equation}
where $\bv_i(z)\in \rr$ and $\bv_\pm(z) \in Y_\pm$.
Exploiting the orthogonal decomposition and setting $\beta_i:=\cP_i(\beta)$ as well as $\beta_\pm:=\cP_\pm(\beta)$, we obtain that the co-moving two-scale system~\eqref{eq:ComovingSystem} is equivalent to the system
\begin{equation}
\label{eq:PDEPulse1-4}
\begin{aligned}
& \bc \bu' = \bu'' + f(\bu) - \sum_{i=1}^m \alpha_i \bv_i ,\\
& \bc \bv_i' = - \lambda_i \bv_i + \beta_i \bu , \hspace{4em}  i = 1,...,m,\\
& \bc \bv_\pm' = - \cL_\pm \bv_\pm + \beta_\pm \bu.
\end{aligned}
\end{equation}
By Assumption~\ref{assump:GuidingSystem}, the first $1+m$ equations admit a pulse solution with
$(\bc , \bu , \bv_1,..., \bv_m) = (c,u,v_1,...,v_m)$ given by~\eqref{eq:UnknownsGuiding}. Since $\lambda_i > 0$ and the $\bv_i$'s are bounded, we have
\begin{equation}
\label{eq:V0Explicit}
\bv_i(z) = \dfrac{1}{c}\int_{-\infty}^z e^{-\frac{\lambda_i}{c}(z - \xi)}\beta_i \bu(\xi)\dd\xi.
\end{equation}
Moreover, we set
\begin{equation}
\label{eq:VjExplicit}
\begin{aligned}
\bv_+(z) & :=\dfrac{1}{c}\int_{-\infty}^z e^{-\frac{\cL_+}{c}(z - \xi)}\beta_+ \bu(\xi)\dd\xi , \\
\bv_-(z) & := -\dfrac{1}{c}\int_z^{+\infty} e^{-\frac{\cL_-}{c}(z - \xi)}\beta_- \bu(\xi)\dd\xi.
\end{aligned}
\end{equation}
Since $\bu \in \rmC^1(\rr)$, it follows from~\cite[Sec.~4.3, Thm.~ 3.5]{Paz83} that $\bv_\pm\in \rmC^0(\rr;D(L_\pm))\cap \rmC^1(\rr;Y_\pm)$. Hence, $\bv \in \rmC^0(\rr;D(L))\cap \rmC^1(\rr;\rmL^2(\rs))$.

\emph{Step 3: exponential decay.} Let $C_2>0$ be according to \eqref{eq:HomoclinicGuiding} such that
\begin{align*}
| \bu(z)|\le C_2 e^{-\sigma|z|},\quad z\in\rr.
\end{align*}
Then the estimate of~\eqref{eq:V0Explicit} and~\eqref{eq:VjExplicit} with the help of~\eqref{eq:ExponentialEstimates} shows that there exist $C_3>0$ and $0<\gamma<\min(\sigma,\lambda_i/c,\sigma_\pm/c)$ such that
\begin{equation}
\label{eq:VjEstimate}
| \bv_i(z)| \le C_3 \beta_i e^{-\gamma|z|},\quad \| \bv_\pm(z)\|_{Y_\pm}\le C_3 \|\beta_\pm\|_{Y_\pm} e^{-\gamma|z|}, \quad z\in\rr.
\end{equation}
Additionally using~\eqref{eq:PDEPulse1-4} and the boundedness of $\cL_-$, we can find $C_4>0$ such that
\begin{equation}
\label{eq:VjEstimate1}
| \bv_i'(z)|\le C_4\beta_i e^{-\gamma|z|},\quad
\|\cL_- \bv_-(z)\|_{Y_-} + \| \bv_-'(z)\|_{Y_-}\le C_4 \|\beta_-\|_{Y_-} e^{-\gamma|z|},\quad  z\in\rr .
\end{equation}
To control $\cL_+ \bv_+ (z)$, we represent $\bv_+(z)$ as follows:
\begin{align*}
\bv_+(z) & = \dfrac{1}{c}\int_{-\infty}^z e^{-\frac{\cL_+}{c}(z - \xi)}\beta_+ [ \bu(\xi) - \bu(z) ] \dd\xi
+ \dfrac{1}{c}\int_{-\infty}^z e^{-\frac{\cL_+}{c}(z - \xi)}\beta_+ \bu(z) \dd\xi \\
& =: \bar{\bv}_1(z) + \bar{\bv}_2(z) .
\end{align*}
According to \eqref{eq:HomoclinicGuiding}, we have $|\bu'(z)|\le C_2 e^{-\sigma|z|}$, $z\in\rr$, and hence
\begin{equation*}
\begin{array}{llll}
\text{(a)} &
|\bu(z) - \bu(\xi)|  \leq & \hspace{-7pt} C_2 e^{-\sigma|z|}  |z - \xi |
\quad & \text{for } 0 \geq z \geq \xi , \\
\text{(b)} &
| \bu(z) - \bu(\xi)| \leq & \hspace{-7pt} C_2 e^{-\sigma|\xi|}  |z - \xi |
\quad & \text{for } z \geq \xi \geq 0, \\
\text{(c)} &
| \bu(z) - \bu(\xi)| \leq  & \hspace{-7pt} C_2 |z - \xi |
\quad & \text{for all } z, \xi \in \rr .
\end{array}
\end{equation*}
First, let $z\leq 0$ be fixed. Exploiting relation~\eqref{eq:ExponentialEstimatesL} and (a) yields $C_5 > 0$ such that
\begin{align*}
\Vert \cL_+ \bar{\bv}_1(z) \Vert_{Y_+}
& \leq  \dfrac{1}{c}\int_{-\infty}^z \big\Vert \cL_+ e^{-\frac{\cL_+}{c}(z - \xi)} \big\Vert_{Y_+}
\big\Vert \beta_+ \big( \bu(\xi) - \bu(z) \big) \big\Vert_{Y_+} \dd\xi \\
& \leq C_1 C_2 \Vert \beta_+\Vert_{Y_+}
\int_{-\infty}^z \dfrac{1}{z - \xi} e^{-\frac{\sigma_+}{c}(z - \xi)} e^{-\sigma|z|} |z - \xi| \dd\xi  \\
& \leq C_5 \Vert \beta_+\Vert_{Y_+} e^{-\gamma|z|} .
\end{align*}
Secondly, fix $z > 0$. Proceeding as in the previous estimate and using (b)--(c) yields
\begin{align*}
\Vert \cL_+ \bar{\bv}_1(z) \Vert_{Y_+}
& \leq C_1 C_2 \Vert \beta_+\Vert_{Y_+} \left\lbrace
\int_{-\infty}^0 \dfrac{1}{z - \xi} e^{-\frac{\sigma_+}{c}(z - \xi)} |z - \xi| \dd\xi \right. \\
& \hspace{8em} + \left.
\int_{0}^z \dfrac{1}{z - \xi} e^{-\frac{\sigma_+}{c}(z - \xi)} e^{-\sigma|\xi|} |z - \xi| \dd\xi \right\rbrace \\
& \leq C_5 \Vert \beta_+\Vert_{Y_+} e^{-\gamma|z|} .
\end{align*}
Next, we obtain
similarly to \cite[Sec.~1.2, Thm.~2.4(b)]{Paz83}
\begin{align*}
\cL_+ \bar{\bv}_2(z) = - \dfrac{1}{c} \cL_+ \left( \int_0^\infty e^{-\frac{\cL_+}{c} \xi} \beta_+ \bu(z) \dd\xi \right)
=  \beta_+ \bu(z) .
\end{align*}
Hence, $\Vert \cL_+ \bar{\bv}_2(z) \Vert_{Y_+} \leq C_5 \Vert \beta_+ \Vert_{Y_+} e^{-\sigma |z|}$ for all $z\in\rr$. Combining the estimates for $\cL_+ \bar{\bv}_1$ and $\cL_+ \bar{\bv}_2$, and using once more \eqref{eq:PDEPulse1-4} gives
\begin{equation}
\label{eq:VjEstimate2}
\|\cL_+ \bv_+(z)\|_{Y_+}+\| \bv_+'(z)\|_{Y_+}\le C_4 \|\beta_+\|_{Y_+} e^{-\gamma|z|},\quad  z\in\rr .
\end{equation}
Overall, relations~\eqref{eq:VSeries} as well as \eqref{eq:VjEstimate}--\eqref{eq:VjEstimate2} imply  estimate~\eqref{eq:PDEPulseVEstimate}.

If $\beta \in D(\cL)$, then we have due to \eqref{eq:VjExplicit}
\begin{equation*}
\cL_+ \bv_+(z) =\dfrac{1}{c}\int_{-\infty}^z e^{-\frac{\cL_+}{c}(z - \xi)} \cL_+ \beta_+ \bu(\xi)\dd\xi , \quad
\cL_- \bv_-(z) = -\dfrac{1}{c}\int_z^{+\infty} e^{-\frac{\cL_-}{c}(z - \xi)} \cL_- \beta_- \bu(\xi)\dd\xi.
\end{equation*}
Using the relation $\bc \cL_\pm \bv_\pm' = - \cL_\pm (\cL_\pm \bv_\pm) + \cL_\pm \bu$ together with \eqref{eq:VjEstimate1} and \eqref{eq:VjEstimate2}, yields the improved estimate~\eqref{eq:PDEPulseVEstimate2}.
\end{proof}

\begin{rem}
\label{rem:zero_spectrum}
\begin{enumerate}
\item \label{rem:v_vanish}
Let $b(y) \equiv b_0$ and $\int_0^1 \beta (y) \dd y =0$. Then the two-scale inhibitor $\bv$ is macroscopically vanishing, i.e., $\int_0^1 \bv(z,y) \dd y \equiv 0$ for all $z\in \rr$. This is immediate from integrating the $\bv$-equation in \eqref{eq:system_b0} over $\rs$. The example in Section \ref{subsec:two-alpha} illustrates this phenomenon.
\item \label{rem:exact_sol}
Let the parameters $(\alpha, \beta, b, d)$ satisfy
\begin{align*}
|\alpha(y)|^2 \equiv 1 , \quad
\beta(y) = \beta_1 \alpha(y), \quad
b(y) \equiv \lambda_1, \quad
d(y) \equiv 0 .
\end{align*}
Then the original system \eqref{eq:system-eps} admits indeed a \emph{generalized pulse solution} $(u^\eps, v^\eps)$ of the form
\begin{align}
\label{eq:genetal_pulse}
u^\eps(t,x) = u(x + ct)
\qquad\text{and}\qquad
v^\eps(t,x) = \alpha(\tfrac{x}{\eps}) v_1(x + ct) ,
\end{align}
where $u^\eps$ is independent of $\eps$, whenever $(c,u,v_1)$ is a homoclinic orbit for the guiding system
\begin{align}
\label{eq:num_guide}
c u' = u'' + f(u) - v_1 , \qquad
c v_1' = - \lambda_1 v_1 + \beta_1  u .
\end{align}
Section \ref{subsec:jumps} provides one example for such a generalized pulse solution.
\item The case of not exactly periodic coefficients such as $\alpha(x,\frac{x}{\eps})$ with $\alpha \in\rmC^\infty (\rr\times\rs)$ is in principle also manageable with our approach, however, the existence of homoclinic orbits for guiding systems with heterogeneous coefficients is beyond the scope of the present paper.
\item In the case where $\beta$ is orthogonal to all eigenfunctions $\widetilde{\alpha}_i$, $i=1,..,m$, all coefficients $\beta_i$ vanish and the equations for $v_i$ decouple from the activator $u$ in the guiding system \eqref{eq:ComovingSystemGuiding}. Then the remaining $u$-equation is of Nagumo type and it is known to possess heteroclinic orbits corresponding to traveling fronts, which can also be found in the two-scale system.
\item The guiding system may admit homoclinic orbits corresponding to multiple pulse solutions, in the sense of \cite{EFF1982}. Since they all satisfy \eqref{eq:HomoclinicGuiding}, our two-scale system \eqref{eq:system-lim} admits multiple pulse solutions as well.
\end{enumerate}
\end{rem}

\subsection{Stability of two-scale pulse solutions}
\label{subsec:stability}

Let us turn our attention back to the full two-scale system \eqref{eq:system-lim}.
By Theorem~\ref{thm:PDEPulse}, it admits the family of pulse solutions
\begin{align}
\label{eq:fam_orig}
( \bu , \bv )_{z_0\in\rr} := \left\lbrace \big( \bu_{z_0} (x + \bc t) , \bv_{z_0} (x + \bc t,y) \big) \,|\, z_0 \in \rr \right\rbrace ,
\end{align}
where $\bu_{z_0} (z) := \bu (z + z_0)$ denotes the shifted function for any shift $z_0 \in \rr$.
Following \cite{EvaI1972,AK2015}, we define exponential stability with respect to the supremum norm for the $z$-variable. For the microscopic variable $y \in \rs$, we distinguish between \emph{weak} exponential stability in $\rmL^2(\rs)$ and \emph{strong} exponential stability in $D(\cL)$.

\begin{defin}
\label{defin:Stability}
\begin{enumerate}
\item\label{defin:StabilityCond}
Let $(U,V)$ denote a solution of the two-scale system \eqref{eq:system-lim} with initial condition $(U_0, V_0)$ and $\mathbb{X}$ denotes a real-valued Hilbert space. We say that the exponential stability condition holds if there exist constants $K_1, K_2, K_3, \kappa >0$ such that for any
\begin{align*}
0 \leq \delta \leq K_1 , \; z_0 \in \rr: \quad
\Vert U_0 - \bu_{z_0} \Vert_{\rmL^\infty(\rr)} + \Vert V_0 - \bv_{z_0} \Vert_{\rmL^\infty(\rr; \mathbb{X})} \leq \delta ,
\end{align*}
there exists a shift $z_1$ with $|z_0 - z_1| \leq \delta K_2$ such that for all $t\geq 0$
\begin{align*}
\Vert U(t, \cdot) - \bu_{z_1}(\cdot + \bc t) \Vert_{\rmL^\infty(\rr)} +
\Vert V(t, \cdot) - \bv_{z_1}(\cdot + \bc t, \cdot) \Vert_{\rmL^\infty(\rr; \mathbb{X})}
\leq \delta K_3 e^{-\kappa t} .
\end{align*}
\item\label{defin:StabilityWeak}
The family of pulse solutions $(\bu, \bv)_{z_0\in\rr}$ in \eqref{eq:fam_orig} is weakly (strongly) exponentially stable, if the exponential stability condition holds with $\mathbb{X} = \rmL^2(\rs)$ (with $\mathbb{X} = D(\cL)$).
\end{enumerate}
\end{defin}

We emphasize that our solutions are bounded according to Theorem \ref{thm:sol-exist}, which justifies the supremum norm in Definition \ref{defin:Stability}. In the case $d(y) \equiv 0$ (no microscopic diffusion), the notions of weak and strong exponential stability coincide.

Furthermore, notice that
\begin{align}
\label{eq:fam_guide}
(u, v_1, ..., v_m)_{z_0\in\rr} := \left\lbrace \big( u_{z_0} (x + ct), v_{1,z_0} (x + ct) , ..., v_{m,z_0}(x + ct) \big) \, |\,  z_0 \in \rr \right\rbrace
\end{align}
with $u$ and $(v_1, ..., v_m)$ given by Assumption \ref{assump:GuidingSystem} is a family of pulse solutions for the standard reaction-diffusion FitzHugh--Nagumo-type system
\begin{subequations}
\label{eq:ComovingSystemGuidingPDE-sub}
\begin{equation}
\label{eq:ComovingSystemGuidingPDE}
\tag{\ref{eq:ComovingSystemGuidingPDE-sub}.GS-PDE}
\begin{aligned}
U_t(t,x) & = U_{xx}(t,x)  + f(U) - \sum_{i=1}^m \alpha_i V_i(t,x) , \\
(V_i)_t(t,x) & =  - \lambda_i V_i(t,x) + \beta_i U(t,x), \quad i=1,...,m.
\end{aligned}
\end{equation}
\end{subequations}
We will refer to system~\eqref{eq:ComovingSystemGuidingPDE} as to the {\em guiding PDE system}.

\begin{assump}
\label{assump:GuidingSystemPDE}
Let $(u, v_1, ..., v_m)_{z_0\in\rr}$ be an exponentially stable family of pulse solutions for the guiding PDE system~\eqref{eq:ComovingSystemGuidingPDE}, i.e., the exponential stability condition in Definition \ref{defin:Stability}.\ref{defin:StabilityCond} holds with $\mathbb{X} = \rr^m$.
\end{assump}

For $m=1$, it is well-known that the pulses of system \eqref{eq:ComovingSystemGuidingPDE} are stable, see e.g.\ \cite{Jon1984} for asymptotic stability and \cite{Yana1985, AK2015} for exponential stability. We expect a similar result to hold true in the case of $m > 1$, however, this is beyond the scope of the present paper.

\begin{theorem}
\label{thm:stability}
Let Assumptions~\ref{assump:coeff}, \ref{assump:EllipticOrNot}, \ref{assump:AlphaEigenfunction}, \ref{assump:GuidingSystem}, \ref{assump:GuidingSystemPDE} hold, and let $\spec(\cL)\subset\{\lambda > 0 \}$. Then the family of pulse solutions $(\bu,\bv)_{z_0\in\rr}$ in \eqref{eq:fam_orig} for the two-scale system~\eqref{eq:system-lim} is weakly exponentially stable.
If $\beta \in D(\cL)$, then $(\bu,\bv)_{z_0\in\rr}$ is also strongly exponentially stable.
\end{theorem}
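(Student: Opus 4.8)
The plan is to transfer exponential stability from the guiding PDE system \eqref{eq:ComovingSystemGuidingPDE} to the two-scale system \eqref{eq:system-lim} by exploiting the same orthogonal spectral decomposition of $\rmL^2(\rs)$ that was used in the proof of Theorem \ref{thm:PDEPulse}. Write a solution $(U,V)$ of \eqref{eq:system-lim} in the co-moving frame $z = x + \bc t$ and decompose the inhibitor as
\begin{align*}
V(t,z,\cdot) = \sum_{i=1}^m V_i(t,z)\,\dfrac{\widetilde\alpha_i(\cdot)}{\alpha_i} + V_+(t,z) ,
\end{align*}
using that $\spec(\cL)\subset\{\lambda>0\}$, so there is no $Y_-$ component. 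Since the $\widetilde\alpha_i$ are eigenfunctions, projecting the $V$-equation onto $Y_i$ and the complement $Y_+$ gives exactly the guiding PDE system \eqref{eq:ComovingSystemGuidingPDE} for $(U,V_1,\dots,V_m)$ \emph{coupled with} a slaved equation $\partial_t V_+ = \cL_+ V_+$-type evolution driven by $\beta_+ U$, where $\cL_+$ has spectrum above $\sigma_+>0$. The key point is that the $V_+$-component is genuinely \emph{passive}: it does not feed back into the $(U,V_1,\dots,V_m)$-subsystem, because $\alpha = \sum_i \widetilde\alpha_i$ has no $Y_+$-component, so $\int_0^1 \alpha(y) V\dd y = \sum_i \alpha_i V_i$ involves only the guiding variables.

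First I would reduce the $\rmL^\infty(\rr;\mathbb{X})$-closeness hypothesis on $(U_0,V_0)$ to an $\rmL^\infty$-closeness of $(U_0, V_{1,0},\dots,V_{m,0})$ to the guiding pulse $(u_{z_0}, v_{1,z_0},\dots,v_{m,z_0})$ together with a smallness bound on $\|V_{+,0}\|_{\rmL^\infty(\rr;Y_+)}$; this is immediate because the projectors $\cP_i,\cP_+$ are bounded on $\rmL^2(\rs)$ and $\bv_{z_0}$ has exactly the $Y_+$-component $\bv_+$ from \eqref{eq:VjExplicit}, which already satisfies \eqref{eq:PDEPulseVEstimate}. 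Then Assumption \ref{assump:GuidingSystemPDE} applied to $(U,V_1,\dots,V_m)$ produces a shift $z_1$ with $|z_1-z_0|\le\delta K_2$ and the exponential decay estimate for $\|U(t)-u_{z_1}(\cdot+ct)\|_{\rmL^\infty(\rr)} + \sum_i\|V_i(t)-v_{i,z_1}(\cdot+ct)\|_{\rmL^\infty(\rr)}$. Next I would control the difference $W_+(t) := V_+(t) - \bv_{+,z_1}(\cdot+\bc t)$: it solves $\bc(W_+)_z = -\cL_+ W_+ + \beta_+(U - u_{z_1})$ in the moving frame (the linear $\cL_+$-part is exactly the same for $V_+$ and $\bv_{+,z_1}$), so by the variation-of-constants formula and the semigroup bound \eqref{eq:ExponentialEstimates} for $e^{-\cL_+ t}$ (sectoriality of $\cL_+$ gives an analytic semigroup on $\rmL^\infty(\rr;Y_+)$ by the usual multiplier argument, or one works in $\rmL^2$ and upgrades via the Sobolev embedding $\rmH^2(\rr)\subset\rmL^\infty(\rr)$ as in Section \ref{sec:justify}), one obtains
\begin{align*}
\|W_+(t)\|_{\rmL^\infty(\rr;Y_+)} \le C e^{-\sigma_+ t}\|W_+(0)\|_{\rmL^\infty(\rr;Y_+)} + C\!\int_0^t e^{-\sigma_+(t-s)} \|U(s) - u_{z_1}(\cdot+\bc s)\|_{\rmL^\infty(\rr)}\dd s .
\end{align*}
Feeding in the already-established exponential decay $\le\delta K_3 e^{-\kappa s}$ of the integrand, the convolution is bounded by $C\delta e^{-\kappa' t}$ with $\kappa' = \min(\sigma_+,\kappa)$ (up to a harmless linear-in-$t$ factor if $\sigma_+=\kappa$, absorbed by slightly decreasing $\kappa'$). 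Reassembling via $V - \bv_{z_1} = \sum_i (V_i - v_{i,z_1})\widetilde\alpha_i/\alpha_i + W_+$ and using boundedness of the $\widetilde\alpha_i$ in $\rmL^2(\rs)$ (hence in $D(\cL)$, since $\cL\widetilde\alpha_i=\lambda_i\widetilde\alpha_i$) gives weak exponential stability, and for the strong version one argues identically with $W_+$ measured in $D(\cL_+)$, invoking \eqref{eq:ExponentialEstimatesL} for $\cL_+ e^{-\cL_+ t}$ and the hypothesis $\beta\in D(\cL)$ exactly as in the passage to \eqref{eq:PDEPulseVEstimate2}.

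The main obstacle is handling the $V_+$-evolution in the supremum norm rather than in $\rmL^2$: the semigroup $e^{-\cL_+ t}$ acts on functions on $\rr\times\rs$, and one must argue that it extends to (or is controlled on) $\rmL^\infty(\rr;Y_+)$ with the decay rate $\sigma_+$, uniformly in the macroscopic variable $x$. Since $\cL_+$ acts only in $y$ and commutes with $x$-translations, this follows from the scalar spectral bound on each eigenmode together with the finite-band structure, but care is needed because $Y_+$ may be infinite-dimensional; the clean route is to estimate $\|e^{-\cL_+ t}\phi(x,\cdot)\|_{Y_+}\le C_1 e^{-\sigma_+ t}\|\phi(x,\cdot)\|_{Y_+}$ pointwise in $x$ from \eqref{eq:ExponentialEstimates} and then take the supremum over $x$, which requires only that $e^{-\cL_+ t}$ acts fiberwise — which it does. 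A secondary subtlety is the bookkeeping of the shift: the same $z_1$ furnished by Assumption \ref{assump:GuidingSystemPDE} must be used in the $V_+$-comparison, and one must check that $\bv_{+,z_1}(\cdot+\bc t)$ is indeed the mild solution of the $V_+$-equation with forcing $\beta_+ u_{z_1}(\cdot+\bc t)$ — which is precisely the content of \eqref{eq:VjExplicit} translated by $z_1$, so no new work is needed there.
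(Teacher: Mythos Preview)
Your proposal is correct and follows essentially the same approach as the paper: spectral decomposition into guiding and guided parts, application of Assumption~\ref{assump:GuidingSystemPDE} to the guiding subsystem to obtain the shift $z_1$ and decay of $(U,V_1,\dots,V_m)$, and a variation-of-constants estimate for the passive $V_+$-component using the fiberwise action of $e^{-\cL_+ t}$ in $y$ (so the $\rmL^\infty_x$-norm passes through exactly as you describe). The paper handles the shift mismatch $z_0 \to z_1$ in the $V_+$-initial data explicitly via the triangle inequality and the Lipschitz bound $\sup_z\|(\bv_+)_z\|_{Y_+}<\infty$ from \eqref{eq:PDEPulseVEstimate} (your ``bookkeeping'' subtlety), and for strong stability it does not invoke the smoothing estimate \eqref{eq:ExponentialEstimatesL} but simply commutes $\cL_+$ through the semigroup using $\beta_+ \in D(\cL_+)$, which avoids the $t^{-1}$ singularity.
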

\begin{proof}
\emph{Step 1: reduction to guiding system.}
Since $\spec(\cL) \subset \{\lambda > 0 \}$, it follows that $\cP_-=0$ and $\cL_-=0$. Therefore, $\bv (z,y)$ is given via the sum in \eqref{eq:VSeries}, where $\bv_-(z,y) \equiv 0$, and $(\bu, \bv_1, ..., \bv_m)_{z_0\in\rr}$ is identical to the family of pulse solutions $(u,v_1, ..., v_m)_{z_0\in\rr}$ in \eqref{eq:fam_guide} for the guiding PDE system \eqref{eq:ComovingSystemGuidingPDE}.
Given the initial conditions $U(0,x) = U_0(x)$ and $V(0,x,y) = V_0(x,y)$, we can decompose the two-scale system~\eqref{eq:system-lim}. Again, the $V$-component is given via the sum
\begin{equation}
\label{eq:VSeriesTime}
V(t,x,\cdot) = \sum_{i=1}^m V_i(t,x) \cdot \frac{\widetilde{\alpha}_i(\cdot)}{\alpha_i}
+ V_+(t,z,\cdot),
\end{equation}
where $V_i(t,x)\in \rr$ and $V_+(t,x):=\cP_+ V(t,x) \in Y_+$. With this, the full two-scale system \eqref{eq:system-lim} reduces to the guiding part
\begin{equation}
\label{eq:ComovingSystemGuidingPDEwithInitialData}
\begin{aligned}
& U_t(t,x) = U_{xx}(t,x)  + f(U) - \sum_{i=1}^m \alpha_i V_i(t,x),\\
& (V_i)_t(t,x) = -\lambda_i V_i(t,x) + \beta_i U (t,x), \qquad  i=1,...,m, \\
& U|_{t=0} = U_0(x),\quad
V_i|_{t=0} = V_{0,i} (x):= \cP_i V_0 (x,\cdot) = \frac{( V_0 (x,\cdot), \widetilde\alpha_i)_{L_2}}{\alpha_i} ,
\end{aligned}
\end{equation}
and the guided part
\begin{equation}
\label{eq:ComovingSystemGuidingPDEwithInitialDataPlus}
\begin{aligned}
& (V_+)_t(t,x) =  -\cL_+ V_+(t,x) + \beta_+ U(t,x),\\
& V_+|_{t=0} = \cP_+ V_0 (x,\cdot).
\end{aligned}
\end{equation}
By Assumption~\ref{assump:GuidingSystemPDE}, there exist constants $K_1, K_2, K_3, \kappa > 0$ such that for any
\begin{align*}
0 \leq \delta \leq K_1, \; z_0 \in \rr : \quad
\Vert U_0  - \bu_{z_0} \Vert_{\rmL^\infty(\rr)} + \sum_{i=1}^m \Vert V_{0,i} -\bv_{i,z_0} \Vert_{\rmL^\infty(\rr)} \leq \delta ,
\end{align*}
there exists a shift $z_1$ with $|z_0 - z_1| \leq \delta K_2$ such that for all $t\geq 0$
\begin{align}
\label{eq:stability}
\Vert U(t) - \bu_{z_1} \Vert_{\rmL^\infty(\rr)} +
\sum_{i=1}^m \Vert V_i(t) - \bv_{i,z_1} \Vert_{\rmL^\infty(\rr)}
\leq \delta K_3 e^{-\kappa t} .
\end{align}
It remains to prove that
\begin{align}
\label{eq:stability2}
\Vert \cP_+ (V_0 - \bv_{z_0}) \Vert_{\rmL^\infty(\rr; \mathbb{X})} \leq \delta
\end{align}
implies for some $K_*, \kappa_* > 0$ and all $t \geq 0$
\begin{align}
\label{eq:Implication}
\Vert V_+(t) - \cP_+ \bv_{z_1} \Vert_{\rmL^\infty(\rr; \mathbb{X})} \leq \delta K_* e^{-\kappa_* t} ,
\end{align}
where $\mathbb{X} = \rmL^2(\rs)$ (and if $\beta\in D(\cL)$, then $\mathbb{X} = D(\cL)$) according to Definition \ref{defin:Stability}.

\emph{Step 2: exponential decay of guided part.} System~\eqref{eq:ComovingSystemGuidingPDEwithInitialDataPlus} is linear and 
$V_+$ is given via
\begin{align}
\label{eq:formula_Vplus}
V_+(t,x) = e^{-\cL_+ t} \big( \cP_+ V_0(x) \big) + \int_0^t e^{-\cL_+ (t - s)} \beta_+ U(s, x) \dd s .
\end{align}
Notice that $\cP_+ \bv = \bv_+$ with $\bv_+$ given in \eqref{eq:VjExplicit}. Since $\bv_+$ solves the $\bv_+$-equation in \eqref{eq:PDEPulse1-4}, we have for all $t \geq 0$ the identity
\begin{align}
\label{eq:identity_Vplus}
\bv_+(x + ct + z_1) = e^{-\cL_+ t} \big( \cP_+ \bv_{z_1} (x) \big) + \int_0^t e^{-\cL_+ (t - s)} \beta_+ \bu_{z_1}(x + cs) \dd s .
\end{align}
Subtracting the equations in \eqref{eq:formula_Vplus} and \eqref{eq:identity_Vplus} as well as using \eqref{eq:ExponentialEstimates} yields
\begin{align}
& \sup_{x\in\rr} \left\Vert V_+(t,x) - \bv_+ (x + ct + z_1) \right\Vert_{Y_+} \nonumber \\
& = \sup_{x\in\rr} \Big\Vert e^{-\cL_+ t} \big( \cP_+ [ V_0(x) - \bv_{z_1} (x) ] \big)
+ \int_0^t e^{-\cL_+ (t-s)} \beta_+ \big[ U (s,x) - \bu_{z_1} (x + cs) \big] \dd s \Big\Vert_{Y_+} \nonumber \\
& \leq C_1 e^{-\sigma_+ t}  \left\lbrace \sup_{x\in\rr} \Vert \cP_+ [ V_0(x) - \bv_{z_0} (x) ] \Vert_{Y_+}
+ \sup_{x\in\rr} \Vert \cP_+ [ \bv_{z_0} (x) - \bv_{z_1} (x) ] \Vert _{Y_+} \right\rbrace
\label{eq:estimate_1} \\
& \quad + C_1 \Vert \beta_+ \Vert_{Y_+} \int_0^t e^{-\sigma_+(t-s)} \sup_{x \in \rr}
\vert U(s, x) - \bu_{z_1}(x + cs) \vert \dd s .
\label{eq:estimate_2}
\end{align}
We estimate the first term in \eqref{eq:estimate_1} by \eqref{eq:stability2} and \eqref{eq:estimate_2} by \eqref{eq:stability}. For the second term in \eqref{eq:estimate_1}, we exploit the Lipschitz continuity
$\Vert \bv_+(z_0) - \bv_+(z_1) \Vert_{Y_+} \leq L |z_0 - z_1| \leq \delta L K_2$ for $\bv_+ \in \rmC^1(\rr; Y_+)$. The Lipschitz constant  $L := \sup_{z\in\rr} \Vert (\bv_+)_z (z, \cdot) \Vert_{Y_+}$ is bounded according to estimate \eqref{eq:PDEPulseVEstimate}.
Choosing $\kappa_* = \min \lbrace \sigma_+, \kappa \rbrace $, we arrive at
\begin{align}
\label{eq:estimate_3}
\sup_{x\in\rr} \left\Vert V_+(t,x) - \bv_+ (x + ct + z_1) \right\Vert_{Y_+}
\leq \delta C_1 \big( 1 + L K_2 + K_3 \Vert \beta_+ \Vert_{Y_+} \big) e^{-\kappa_* t} .
\end{align}
Hence, estimate \eqref{eq:Implication} follows immediately and the weak exponential stability of the family of pulse solutions $(\bu,\bv)_{z_0\in\rr}$ in \eqref{eq:fam_orig} is proven.

If $\beta \in D(\cL)$, then $\bv_+$ belongs according to \eqref{eq:PDEPulseVEstimate2} to the space $\rmC^1(\rr; D(\cL_+))$. With this higher regularity, the estimates \eqref{eq:estimate_1}, \eqref{eq:estimate_2}, and \eqref{eq:estimate_3} also hold with $D(\cL_+)$ instead of $Y_+$. Hence, the family of pulse solutions $(\bu,\bv)_{z_0\in\rr}$ in \eqref{eq:fam_orig} is also strongly exponentially stable.
\end{proof}

\begin{rem}
We point out that the constants $K_3$ and $\kappa$ in Definition \ref{defin:Stability} are in general not the same for the guiding pulse $(u,v_1,..,v_m)_{z_0\in\rr}$ and the two-scale pulse $(\bu, \bv)_{z_0\in\rr}$.
\end{rem}

\section{Numerical simulations}
\label{sec:numerics}

We provide numerical examples for three different parameter settings $(\alpha, \beta, b,d)$ and compare the solutions of the original system \eqref{eq:system-eps} with those of the two-scale system \eqref{eq:system-lim}. In the first two examples the spectrum of $\cL$ is discrete and we know that stable two-scale pulses exist according to Section \ref{sec:pulses}. In the third example $\cL$ has only a continuous spectrum and our \emph{guiding system} approach fails, because the two-scale system does not reduce to finitely many ODEs. However, we observe stable pulse solutions in our simulations.

We numerically solve the FitzHugh--Nagumo equations on the bounded interval $x \in [-300, 300]$ with periodic boundary conditions. We emphasize at this point that the $\eps$ that is chosen in the numerical simulations is in the range $\eps \in [2, 30]$. At first glance, this is not a ``small'' number, however, recall that the characteristic length scale of the microstructure $\eps_\mathrm{char}$ is given by the quotient of microscopic length scale divided by macroscopic length scale. The role of the macroscopic length scale of our system is played by the width of the activator spike, which is about $60$, cf.\ Figure \ref{fig:guide}. With this, the characteristic ratio $\eps_\mathrm{char} \in [0.03,0.5]$ is indeed small.

To calculate the solutions, we implement a semi-implicit discretization scheme in MATLAB. Therein, the diffusion parts are solved via fast Fourier transform and the reaction terms are treated with the explicit Euler method. Therefore, we use the time step $\dd t  = 0.01$. For the spatial 	
discretization we use for the $\eps$-system \eqref{eq:system-eps} the step size $\dd x \approx 0.0366$, and for the two-scale system \eqref{eq:system-lim} $\dd x \approx 1.1742$ and $\dd y \approx 0.0020$.

\subsection{Macroscopically vanishing inhibitor $\bv$}
\label{subsec:two-alpha}

We consider the case of a differential operator $\cL$ with constant coefficients $b(y) \equiv d(y) \equiv \delta$ for $0 < \delta \ll 1$, i.e.,
\begin{align*}
(\cL \varphi) (y) = -\delta (\varphi_{yy} - \varphi)
\quad\text{and}\quad D(\cL) = \rmH^2(\rs) .
\end{align*}
The eigenfunctions of $\cL$ are given via $\varphi^s_n(y) = \sin(2\pi n y)$, $\varphi^c_n(y) = \cos(2\pi n y)$, for $n \geq 1$, and $\varphi_0 (y) \equiv 1$. Therefore, Assumption \ref{assump:EllipticOrNot} is satisfied. The corresponding eigenvalues $\lambda_n = \delta(1 + (2\pi n)^2)$ are isolated, real, positive, and have double geometric multiplicity for all $n \geq 1$, whereas $\lambda_0 = \delta$ is simple.
In this example, $\alpha$ is the sum of two eigenfunctions, namely,
\begin{align}
\label{eq:param}
\begin{array}{l}
\alpha = \widetilde\alpha_1 +\widetilde\alpha_2
\quad\text{with}\quad
\widetilde\alpha_1(y) = \sqrt{2} \sin(2\pi y) ,\quad
\widetilde\alpha_2(y) = \sqrt{2} \sin(4\pi y) , \\
\beta(y) = 0.001 (\alpha(y) + \varphi(y)) , \quad
\varphi(y) = \sqrt{2} \sin(8\pi y) ,
\quad\text{and}\quad
\delta = 0.0001 .
\end{array}
\end{align}
Notice that $\varphi$ is orthogonal to $\alpha$ in $\rmL^2(\rs)$. We emphasize that $\beta$ is not orthogonal to $\alpha$ but the signs of $\alpha$, $\beta$, and the product $\alpha(y) \beta(y)$ are not constant, cf.\ Remark \ref{rem:param}.\ref{rem:sign}.
\begin{figure}[b]
\centering
\includegraphics[width=0.5\textwidth]{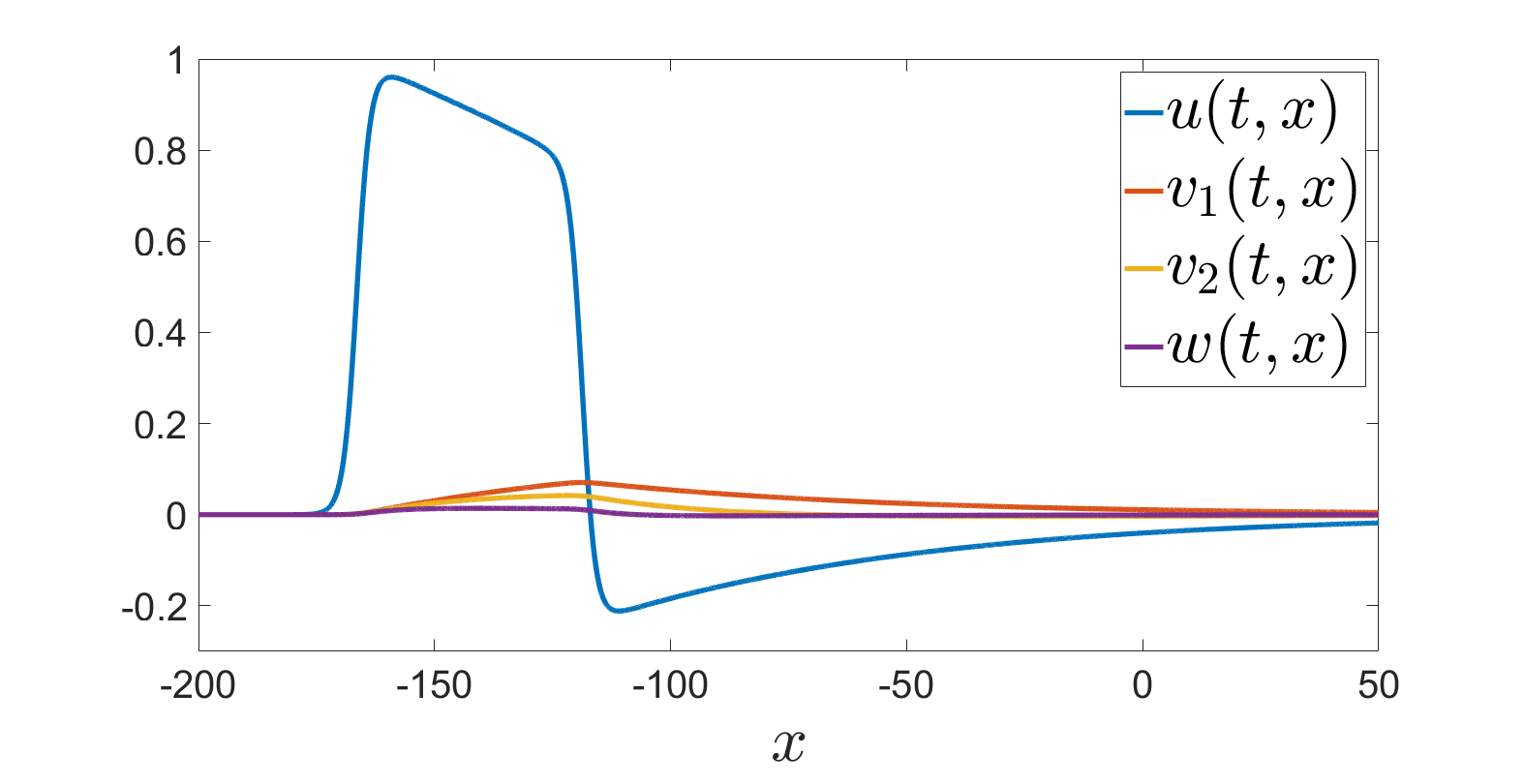}
\caption{Solution $(u,v_1,v_2,w)$ of the guiding system \eqref{eq:guide_time}--\eqref{eq:guide_time_add}. Here and in what follows, the pulse always propagates from the right to the left.}
\label{fig:guide}
\end{figure}

For the choice of parameters in \eqref{eq:param}, the fully decomposed two-scale system of finitely many coupled ODEs as in \eqref{eq:PDEPulse1-4} reads
\begin{align}
\label{eq:guide_time}
& \left\lbrace
\begin{array}{ll}
u_t & \hspace{-5pt} = u_{xx} + u(1 {-} u)(u {-} 0.15) - v_1 - v_2 , \\
(v_1)_t & \hspace{-5pt} = - \lambda_1 v_1 + 0.001 \!\cdot\! u , \\
(v_2)_t & \hspace{-5pt} = - \lambda_2 v_2 + 0.001 \!\cdot\! u ,
\end{array}
\right. \\
\label{eq:guide_time_add}
& \left\lbrace
\begin{array}{ll}
w_t & \hspace{-5pt} = - \lambda_3 w + \beta_+  u , \\
(v_+)_t & \hspace{-5pt} = - \cL_+ v_+ .
\end{array}
\right.
\end{align}
The three-component system \eqref{eq:guide_time} is the guiding system, the $w$-equation in \eqref{eq:guide_time_add} corresponds to the projection onto the eigenfunction $\varphi$, and the $v_+$-equation captures the remaining projections onto the complement of $\Span (\widetilde{\alpha}_1, \widetilde{\alpha}_2, \varphi)$.
In view of \eqref{eq:Alpha*Beta*}, the parameters in the guiding system \eqref{eq:ComovingSystemGuiding} satisfy $\alpha_1 = \alpha_2 = 1$ and $\beta_1 = \beta_2 = \beta_+ = 0.001$. Recall that $\lambda_1 = 0.0001  (1 + 4\pi^2)$, $\lambda_2 = 0.0001 (1 + 16\pi^2)$, and $\lambda_3 = 0.0001  (1 + 64\pi^2)$.

First, we solve the guiding system \eqref{eq:guide_time}--\eqref{eq:guide_time_add}, see Figure \ref{fig:guide},
so that we can use the pulse $(u,v_1,v_2)$ and the additional decoupled component $w$ to compute the initial conditions for the original system \eqref{eq:system-eps} and the two-scale system \eqref{eq:system-lim}.

Secondly, we solve the original system \eqref{eq:system-eps} for various $\eps>0$, see Figure \ref{fig:eps_1},
\begin{equation}
\label{eq:num_eps}
u^\eps_t = u^\eps_{xx} + u^\eps(1 - u^\eps)(u^\eps - 0.15) - \alpha(\tfrac{x}{\eps}) v^\eps , \qquad
v^\eps_t = \delta \left( \eps^2 v^\eps_{xx} - v^\eps \right) + \beta (\tfrac{x}{\eps}) u^\eps ,
\end{equation}
supplemented with the initial condition $u^\eps_0(x) = u(x)$ and $v^\eps_0(x) = \widetilde\alpha_1(\frac{x}{\eps}) v_1(x) + \widetilde\alpha_2(\frac{x}{\eps}) v_2(x) + \varphi(\tfrac{x}{\eps}) w(x)$. According to the homogenization results in Section \ref{sec:justify}, the solutions behave asymptotically like $u^\eps(t,x) = U(t,x) + O(\eps)$ and $v^\eps(t,x) = V(t,x,\frac{x}{\eps}) + O(\eps)$. One can observe in Figure \ref{fig:eps_1} that the amplitude of the oscillations of $u^\eps$ decrease as $\eps$ decreases. However, the amplitude of oscillations of $v^\eps$ does not vanish, while,
smaller $\eps$ lead to higher frequencies. In Figure \ref{fig:eps_1}, we also observe oscillations of the inhibitor $v^\eps$, which correspond to the different modes $\widetilde{\alpha}_1$, $\widetilde{\alpha}_2$, and $\varphi$.
\begin{figure}[h!]
\includegraphics[width=0.5\textwidth]{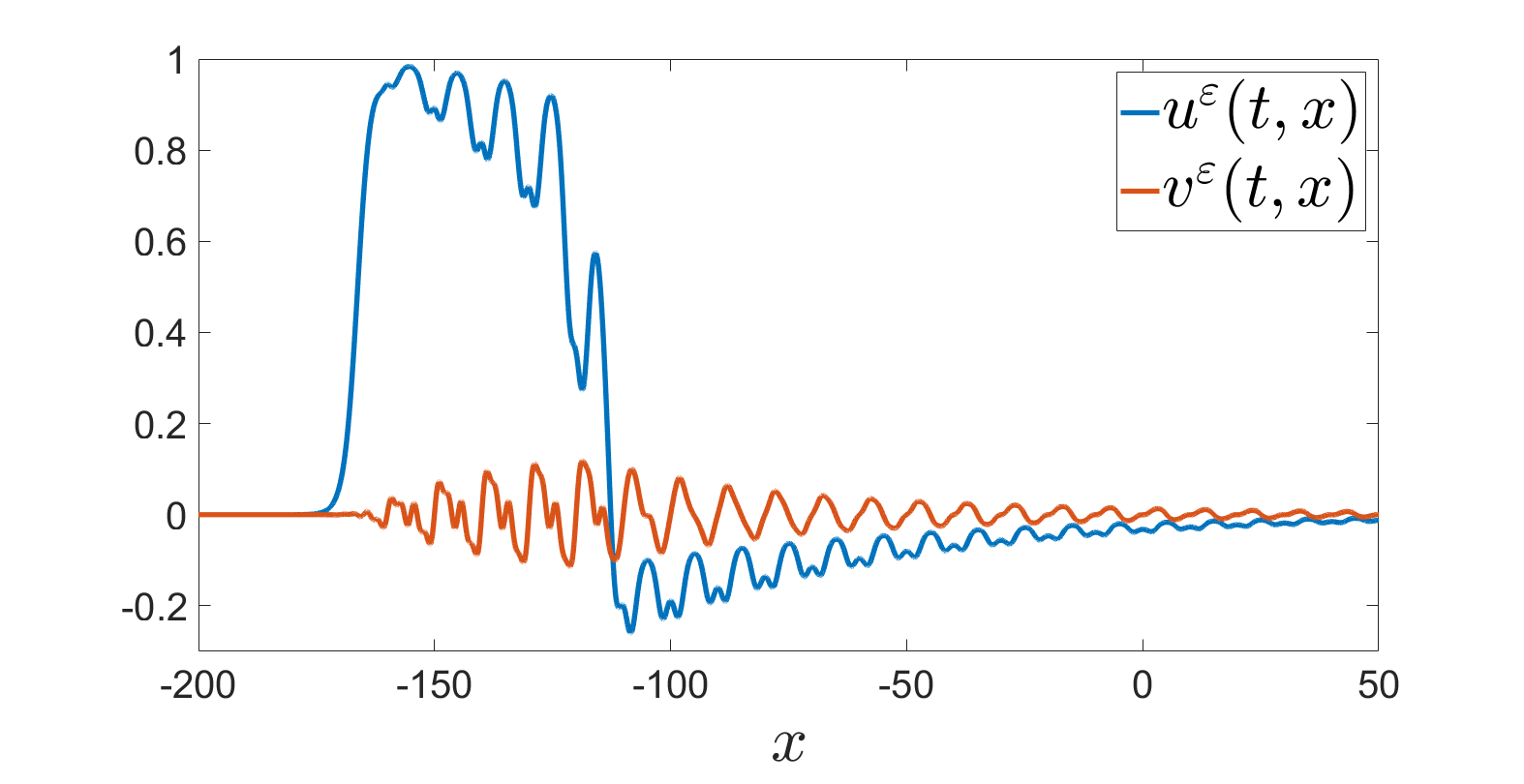}
\includegraphics[width=0.5\textwidth]{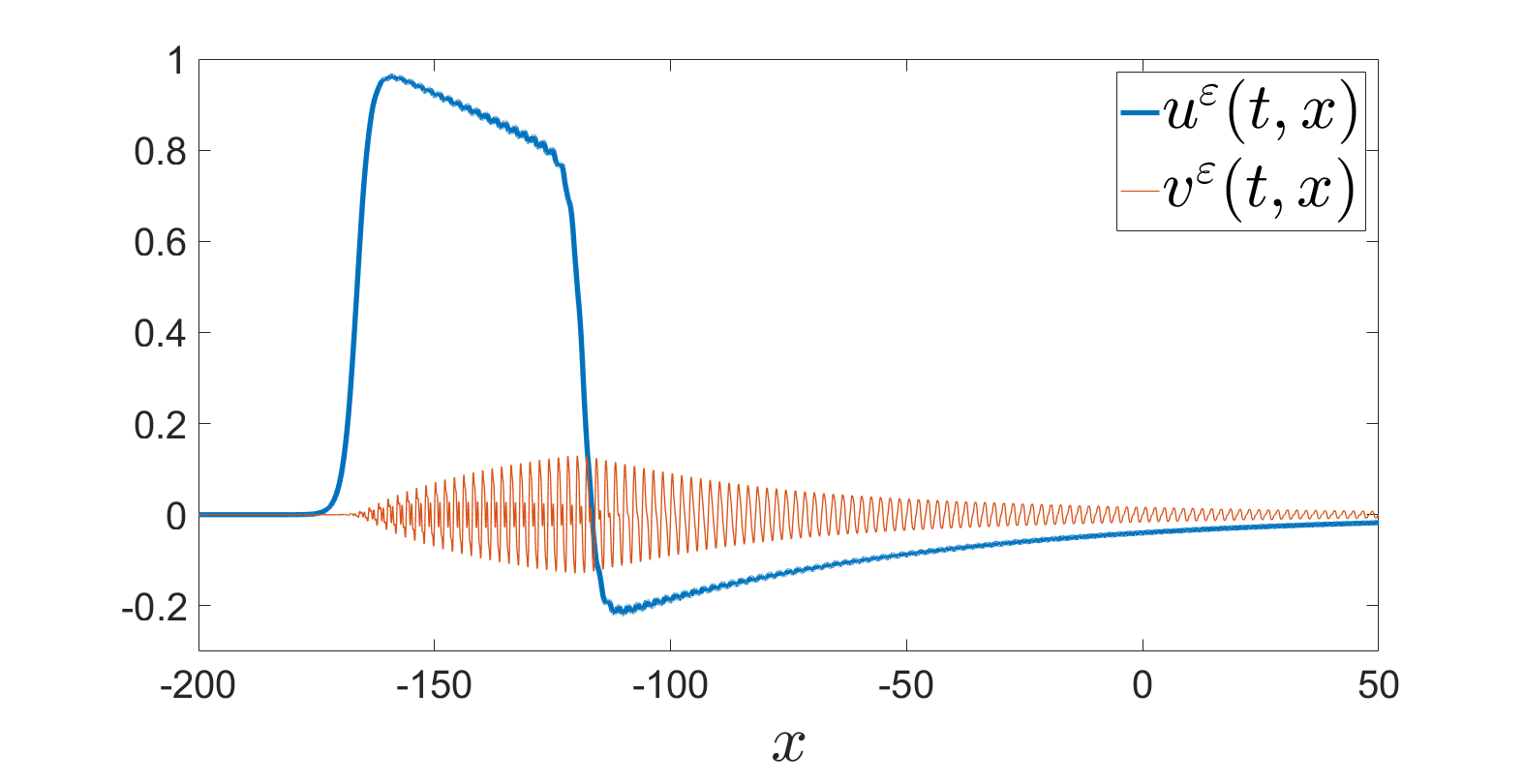}
\caption{Solution $(u^\eps, v^\eps)$ of the original system \eqref{eq:num_eps} with parameters \eqref{eq:param}. Left: $\eps = 10$. Right: $\eps = 2$.}
\label{fig:eps_1}
\end{figure}
\begin{figure}[h!]
\includegraphics[width=0.5\textwidth]{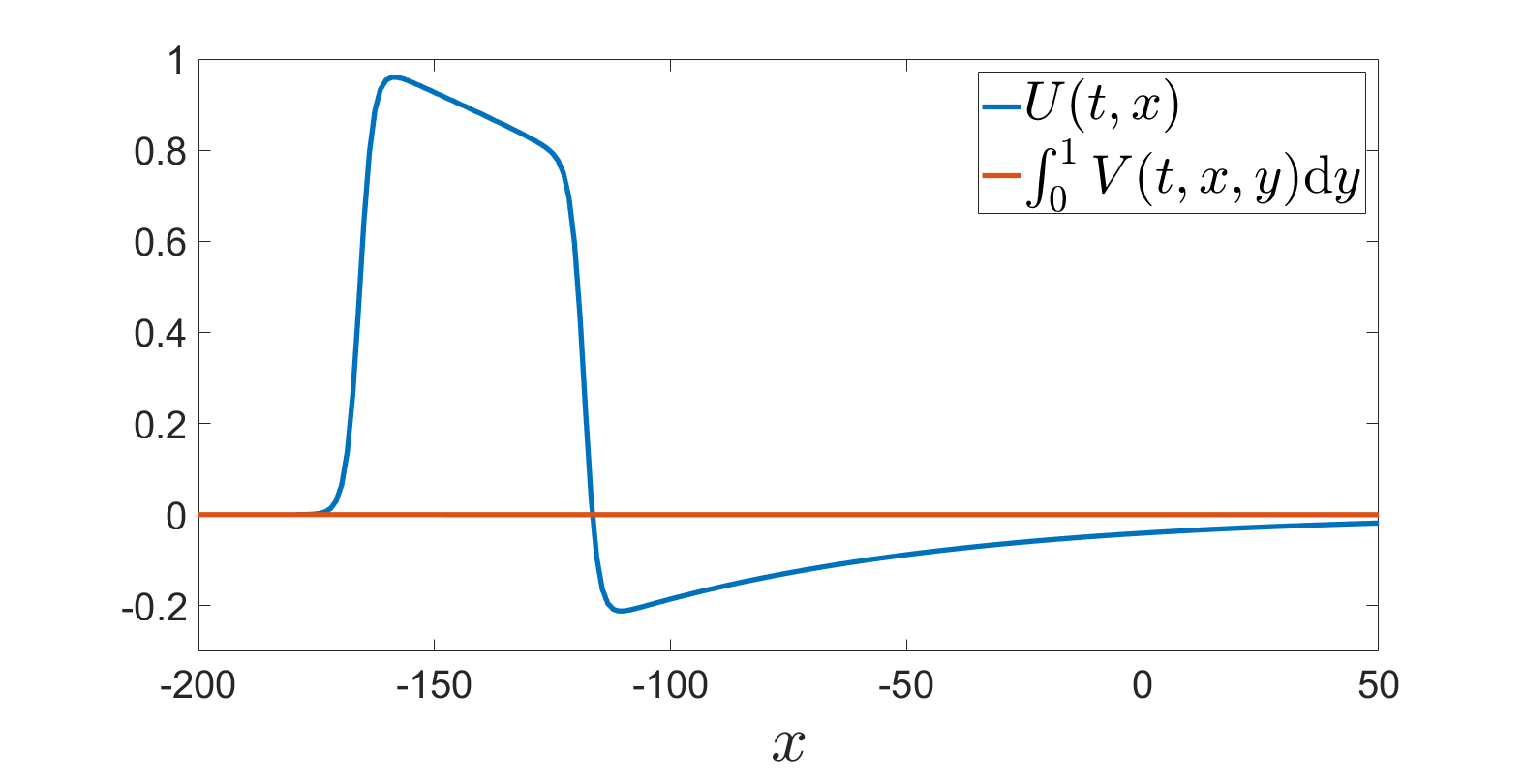}
\includegraphics[width=0.5\textwidth]{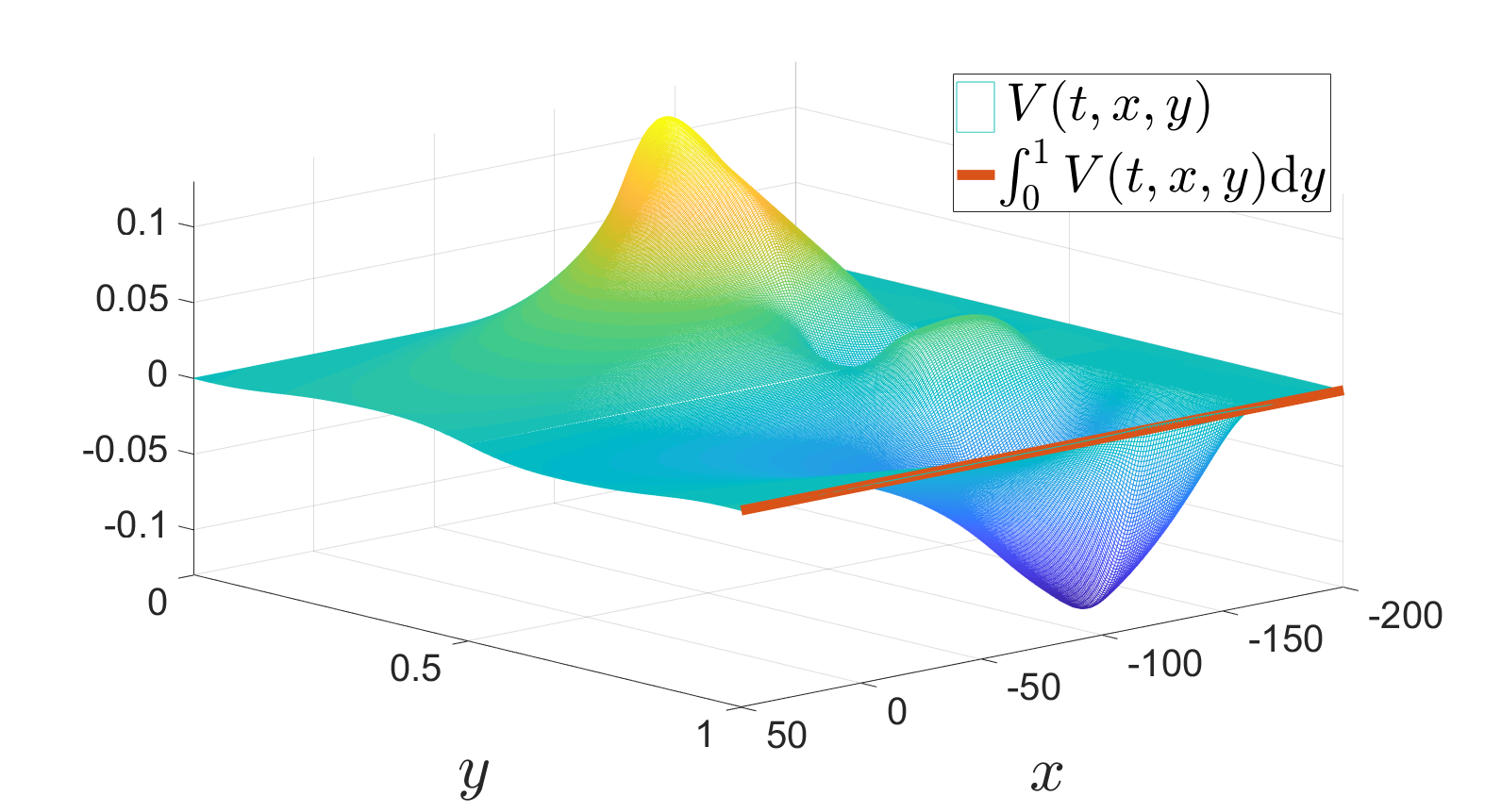}
\caption{Solution $(U,V)$ of the two-scale system \eqref{eq:system-lim} with $\alpha$ and $\beta$ as in \eqref{eq:param} and $b = d = \delta$. Left: the components $U$ and $\int_0^1 V(t,x,y) \dd y$. Right: the $V$-component in $xy$-plane (rotated by $180^\circ$) and its average.}
\label{fig:limit}
\end{figure}

Finally, we compare our results to the solution of the two-scale system \eqref{eq:system-lim}, see Figure \ref{fig:limit}.
We choose the initial conditions $U_0(x) = u(x)$ and $V_0(x,y) = \widetilde\alpha_1(y) v_1(x) + \widetilde\alpha_2(y) v_2(x) + \varphi(y) w(x)$. In order to plot the one-scale component $U(t,x)$ in one diagram with the two-scale component $V(t,x,y)$, see Figure \ref{fig:limit} (left), we average the solution $V$ over the periodicity cell $\rs$.
In our case
\begin{align*}
\int_0^1 \bv(x + ct ,y) \dd y = 0 \qquad \text{for all } x \in\rr, \, t \geq 0,
\end{align*}
since $\int_0^1 \beta (y) \dd y = 0$, cf.~ Remark \ref{rem:zero_spectrum}.\ref{rem:v_vanish}. In this sense we actually found an exemplary pulse solution with macroscopically vanishing inhibitor.

\subsection{Generalized pulse solution for the original system \eqref{eq:system-eps}}
\label{subsec:jumps}

In this example there is no inhibitor diffusion, $d(y) \equiv 0 $, and $b(y) \equiv b_0 > 0$ is constant such that Assumption \ref{assump:EllipticOrNot} is satisfied and the spectrum of $\cL = b_0 \, \mathrm{Id}$ consists of the only eigenvalue $b_0$. With this, any $\alpha \in \rmL^2(\rs)$ is an eigenfunction of $\cL$ and we choose
\begin{align}
\label{eq:param_2}
b_0 = 0.00001, \qquad
\alpha(y) = \left\lbrace
\begin{array}{ll}
+1 & \text{ if } y \in [0, 0.7) , \\
-1 & \text{ if } y \in [0.7 , 1) ,
\end{array}
\right. \qquad
\beta(y) = 0.003 \, \alpha(y) .
\end{align}
According to Remark \ref{rem:zero_spectrum}.\ref{rem:exact_sol}, the inhibitor $v^\eps$ of the generalized pulse solution $(u^\eps,v^\eps)$ of the original system
\begin{align}
\label{eq:num_eps_2}
u^\eps_t = u^\eps_{xx} + u^\eps (1 {-} u^\eps) (u^\eps {-} 0.15) - \alpha(\tfrac{x}{\eps}) v^\eps , \qquad
v^\eps_t = - b_0 v^\eps + \beta(\tfrac{x}{\eps}) u^\eps
\end{align}
exhibits oscillations, whereas the activator $u^\eps$ is independent of $\eps$, see Figure \ref{fig:eps_b0}.
\begin{figure}[h!]
\includegraphics[width=0.5\textwidth]{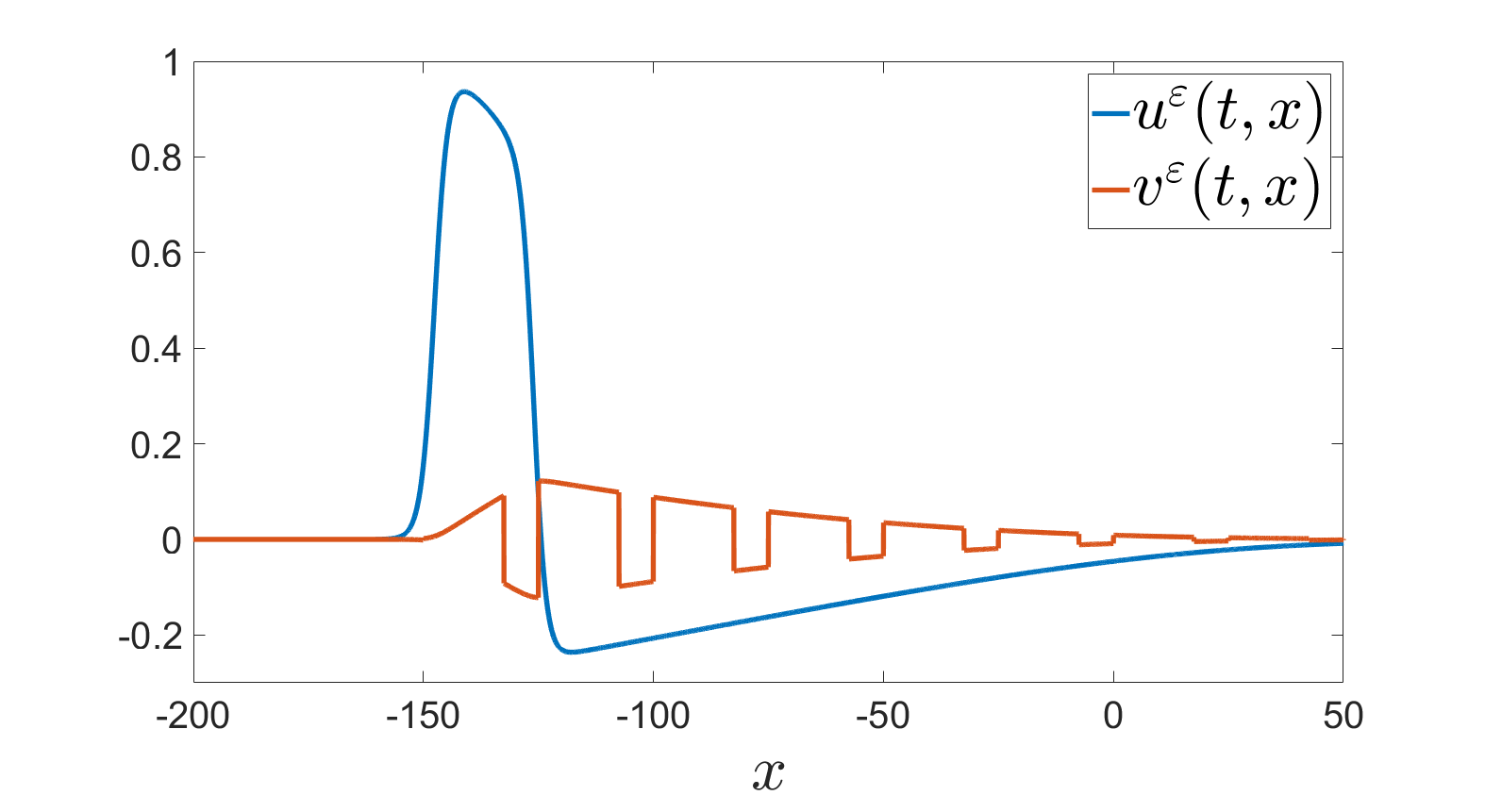}
\includegraphics[width=0.5\textwidth]{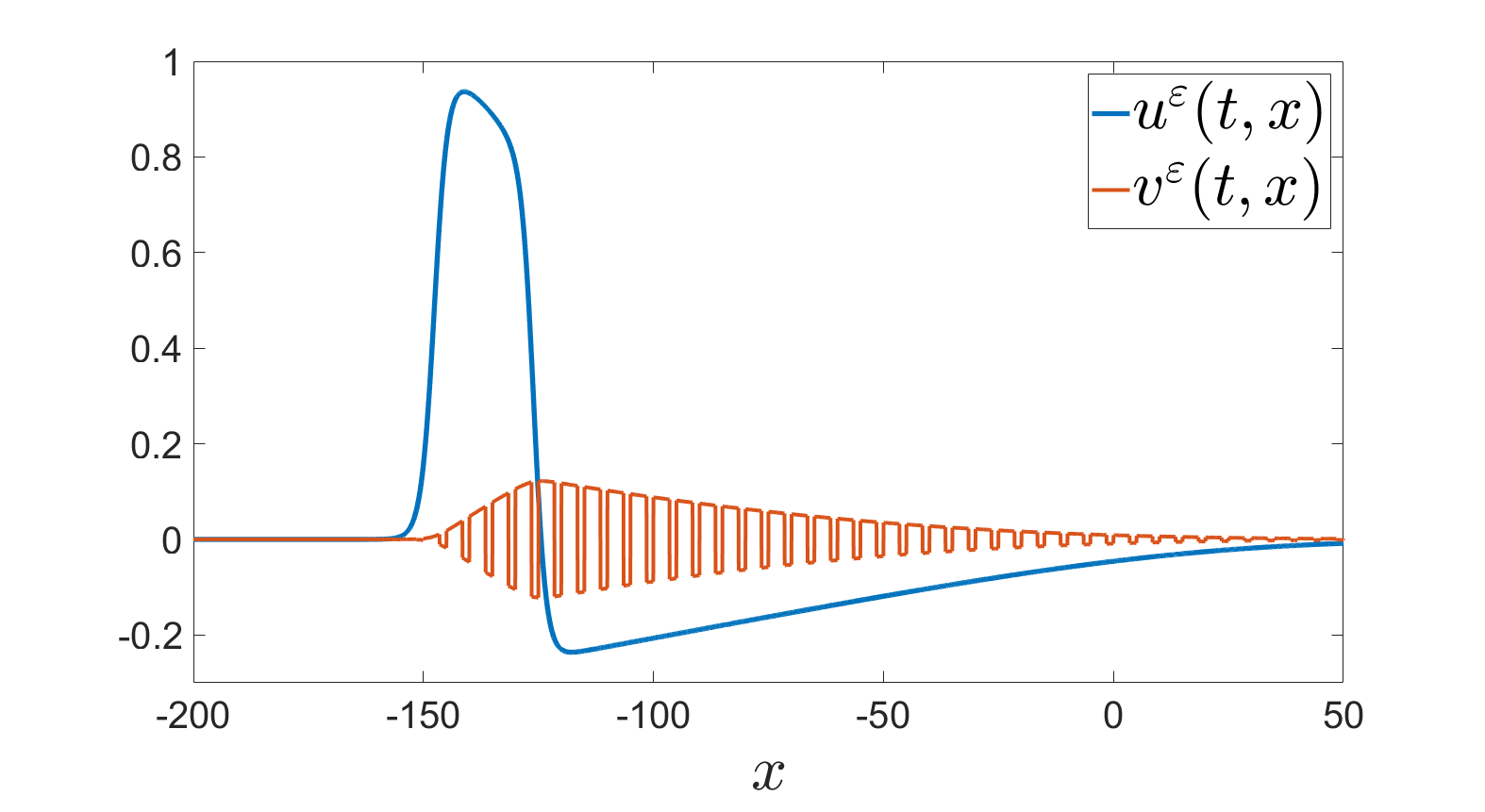}
\caption{Solution $(u^\eps, v^\eps)$ of the original system \eqref{eq:num_eps_2} with parameters \eqref{eq:param_2}. Left: $\eps = 25$. Right: $\eps = 5$.}
\label{fig:eps_b0}
\end{figure}
\begin{figure}[h!]
\includegraphics[width=0.5\textwidth]{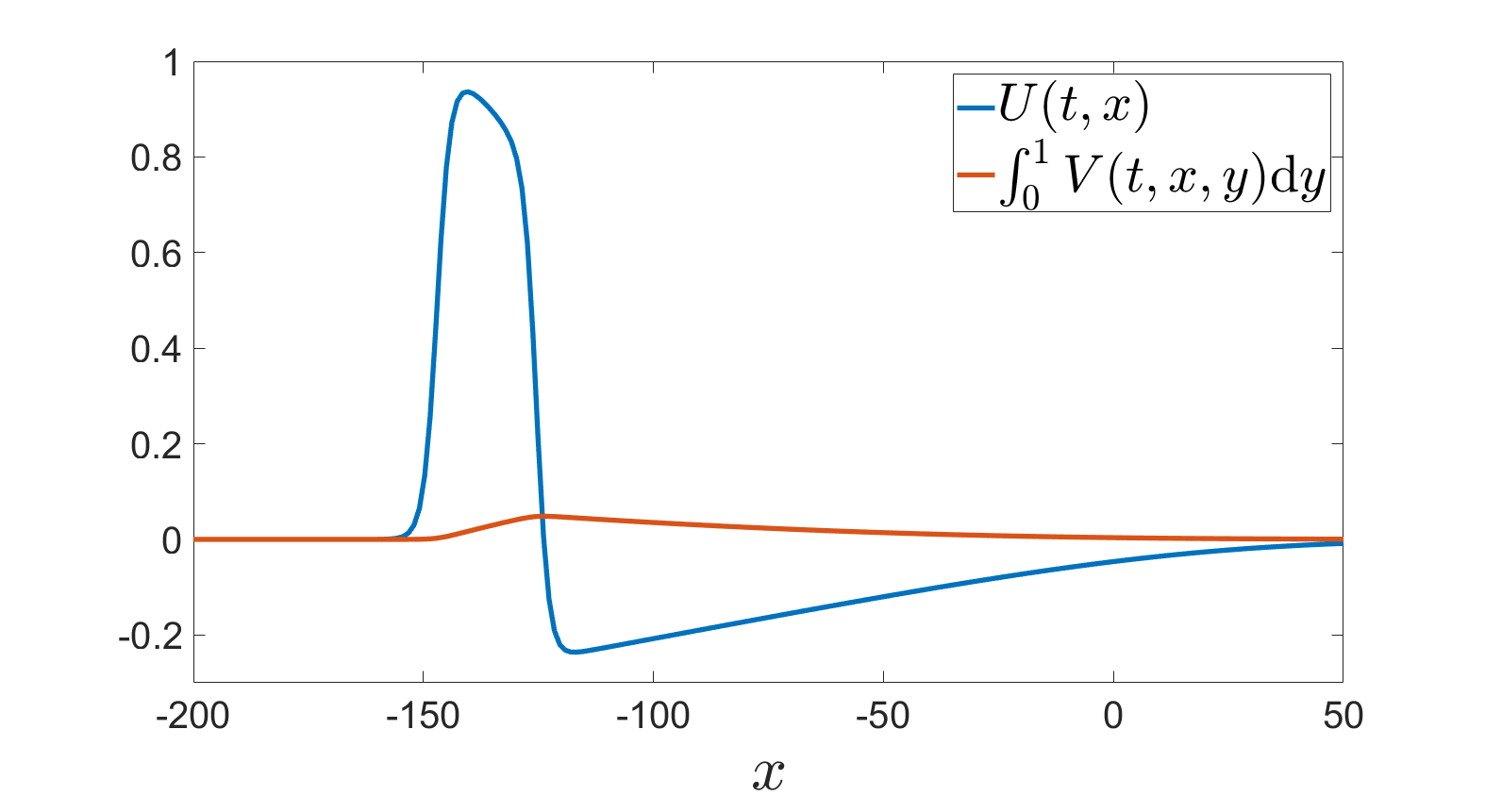}
\includegraphics[width=0.5\textwidth]{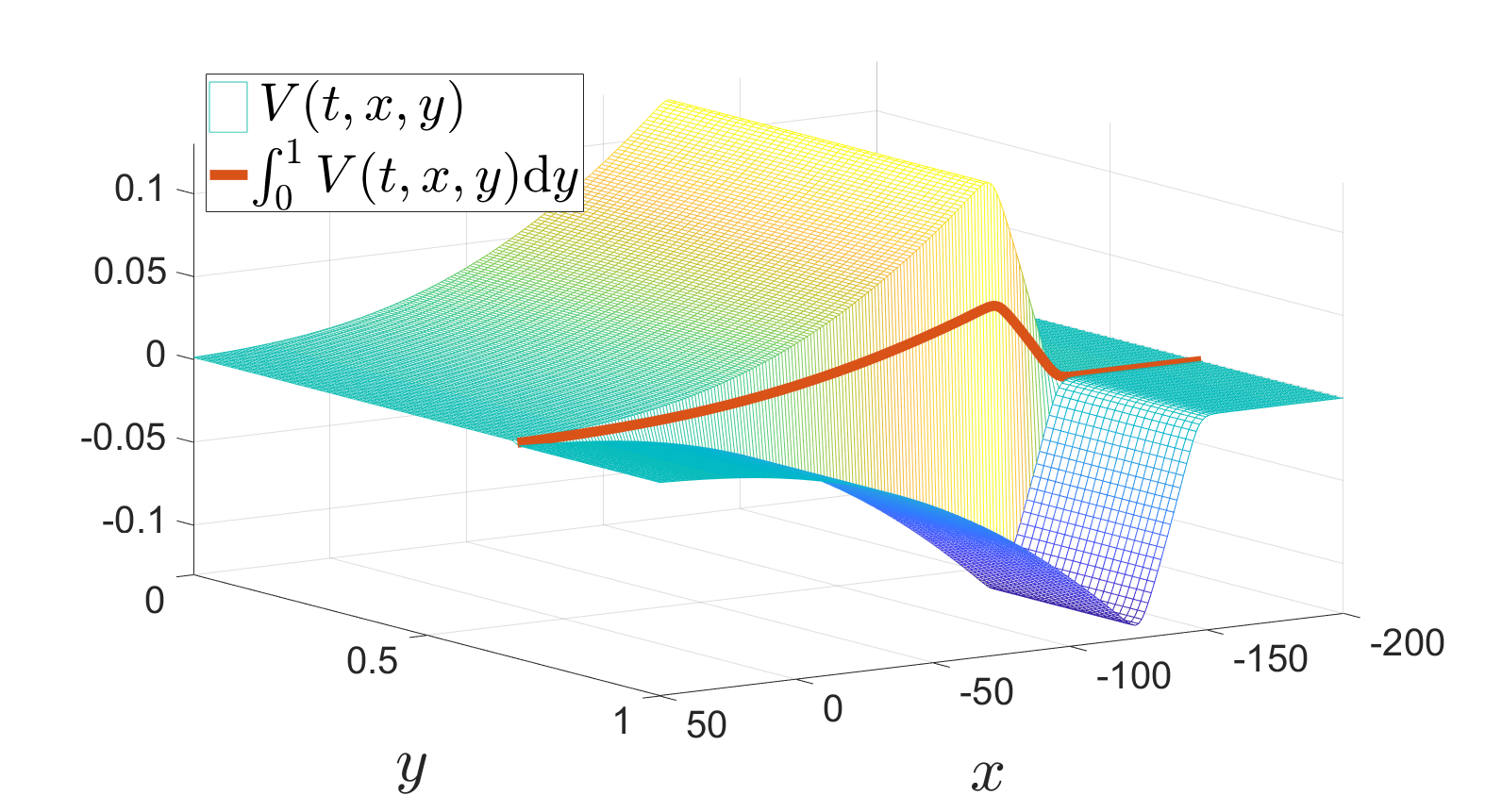}
\caption{Solution $(U, V)$ of the two-scale system \eqref{eq:system-lim} with parameters \eqref{eq:param_2} and $d=0$. Left: the components $U$ and $\int_0^1 V(t,x,y) \dd y$. Right: the $V$-component in $xy$-plane (rotated by $180^\circ$) and its average.}
\label{fig:limit_b0}
\end{figure}

Again, we observe a nice agreement with the two-scale pulse solution of the limit system \eqref{eq:system-lim}, see Figure \ref{fig:limit_b0}.
In this case the average of $V$ does not vanish, since $\int_0^1 \beta (y) \dd y \neq 0$. Due to \eqref{eq:genetal_pulse} and the relation $\int_0^1 \alpha(y) \dd y = 0.4$, we have
\begin{align*}
\int_0^1 V (t,x,y) \dd y = 0.4 \, v_1 (x + ct) \qquad \text{for all } x \in \rr, \, t \geq 0 ,
\end{align*}
where $v_1$ is given via the guiding system \eqref{eq:num_guide}.

\subsection{Continuous spectrum of $\cL$}
\label{subsec:contin_spec}

Let us consider the case where $\cL$ has only a continuous spectrum, which does not fit into the scope of our assumptions in Section \ref{sec:pulses}. In this case Theorem \ref{thm:error-est} still holds, but our method for the proof of two-scale pulses fails. However, we are able to present a numerical example which indicates that stable pulses also exist in this situation. Let us study the operator
$(\cL \varphi) (y) = b(y) \varphi$, where $b(y)$ is a positive and bounded non-constant function. The data are
\begin{align}
\label{eq:param_3}
b(y) = 0.001 (5 {+} 3\sin(2\pi y)), \qquad
\alpha(y) \equiv 1, \qquad
\beta(y) \equiv 0.003 .
\end{align}
We solve the original system for various $\eps$, see Figure \ref{fig:eps_b},
\begin{align}
\label{eq:num_eps_3}
u^\eps_t = u^\eps_{xx} + u^\eps (1 {-} u^\eps) (u^\eps {-} 0.15) -  v^\eps , \quad
v^\eps_t = - 0.001 \left( 5 {+} 3\sin(2\pi \tfrac{x}{\eps}) \right) v^\eps + 0.003 \!\cdot\! u^\eps .
\end{align}
\vspace*{-0.8em}
\begin{figure}[h!]
\includegraphics[width=0.5\textwidth]{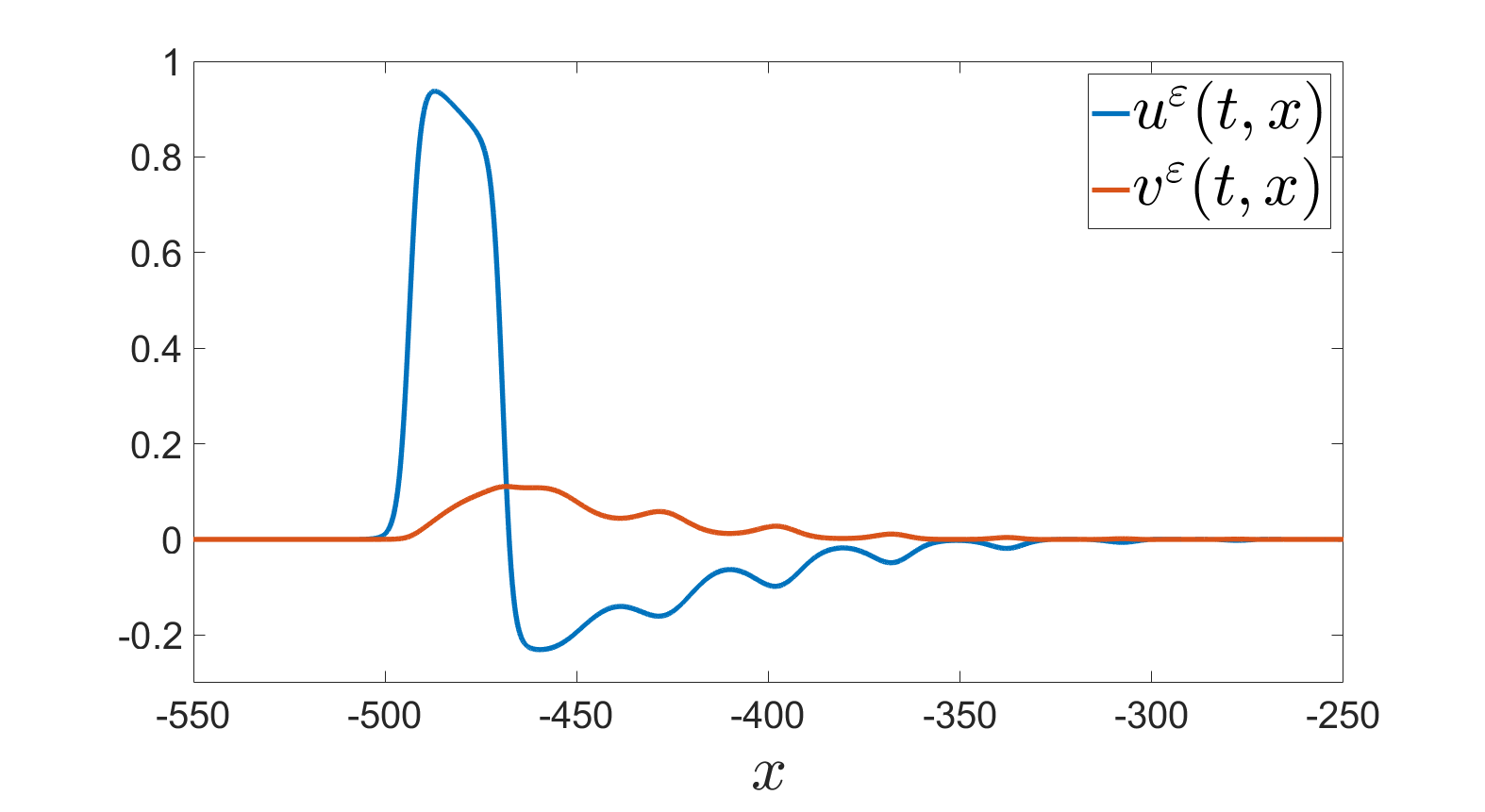}
\includegraphics[width=0.5\textwidth]{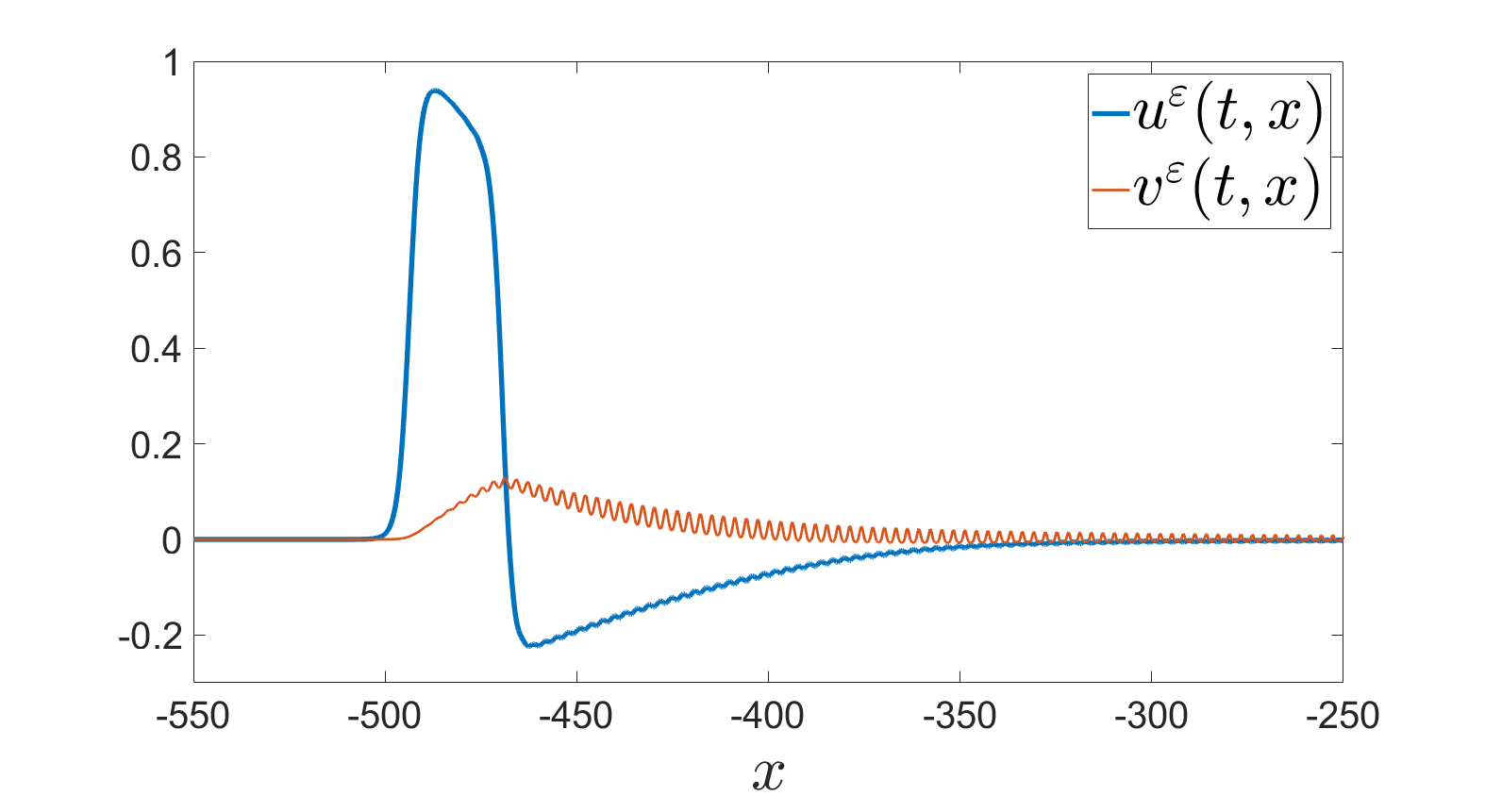}
\caption{Solution $(u^\eps, v^\eps)$ of the original system \eqref{eq:num_eps_3}. Left: $\eps = 30$. Right: $\eps = 3$.}
\label{fig:eps_b}
\end{figure}
\begin{figure}[h!]
\includegraphics[width=0.5\textwidth]{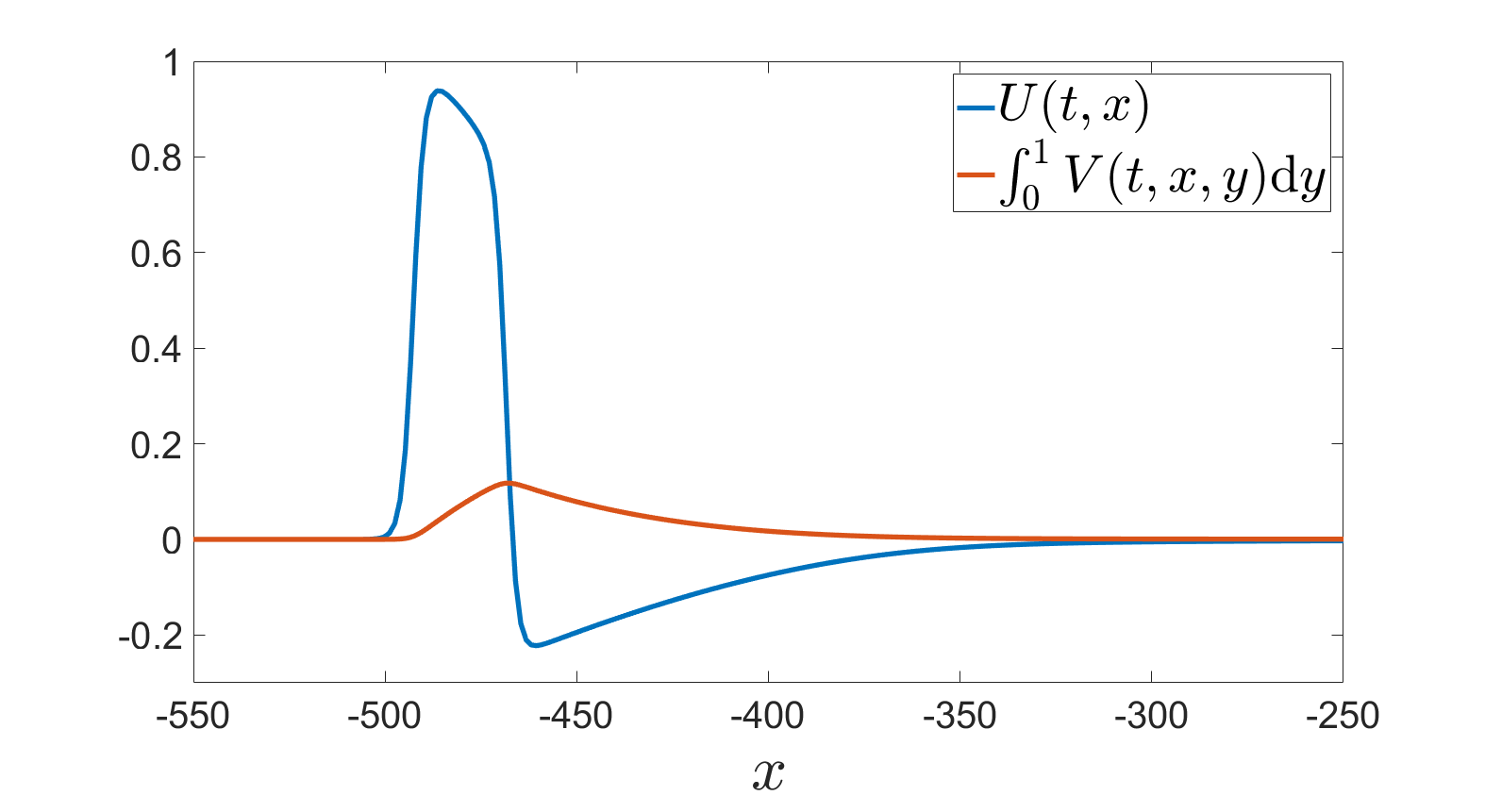}
\includegraphics[width=0.5\textwidth]{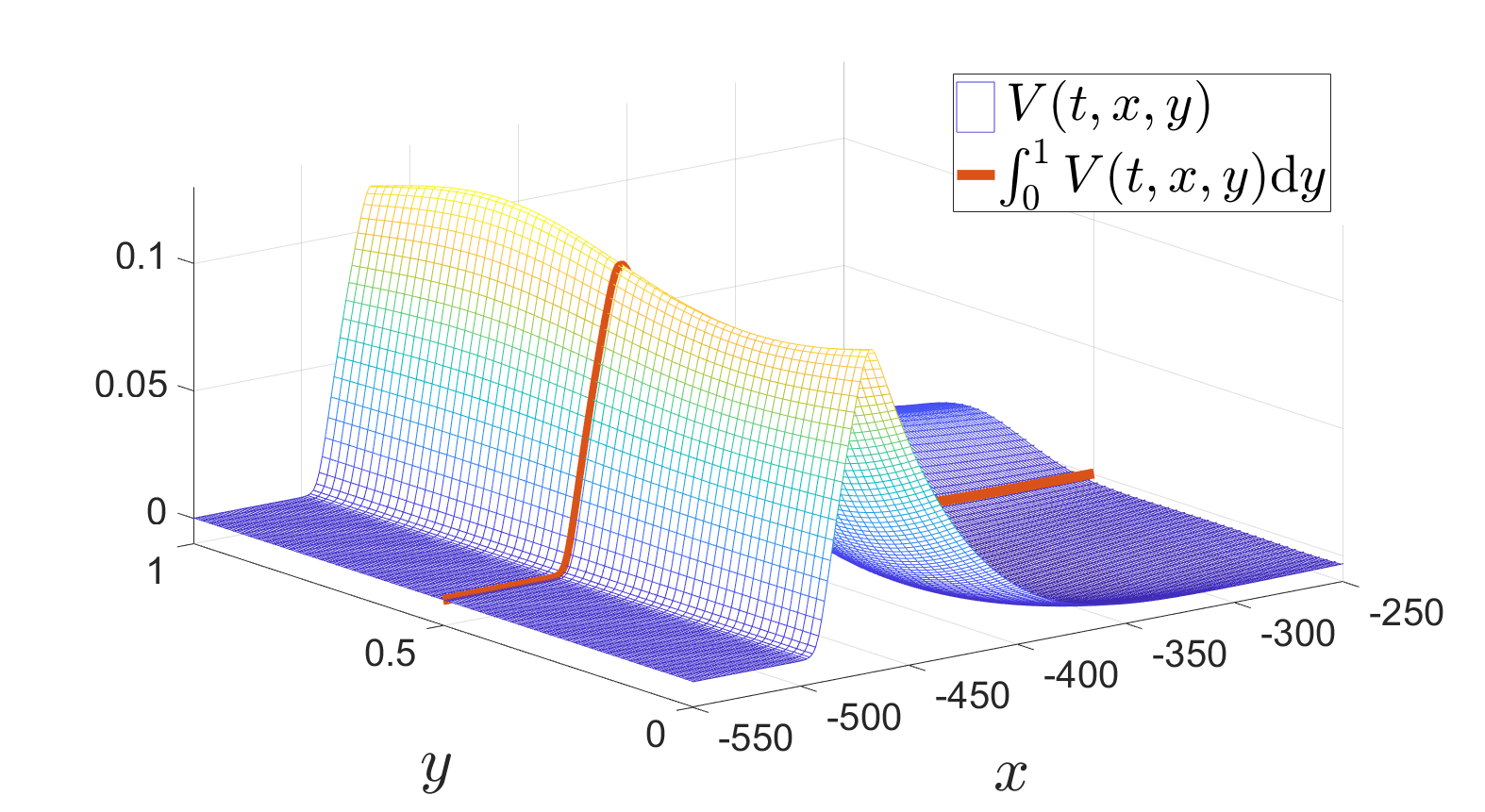}
\caption{Solution $(U,V)$ of the two-scale system \eqref{eq:system-lim} with parameters \eqref{eq:param_3}. Left: the components $U$ and $\int_0^1 V(t,x,y) \dd y$. Right: the $V$-component in $xy$-plane and its average.}
\label{fig:limit_b}
\end{figure}

The solution $(U,V)$ of the two-scale system \eqref{eq:system-lim} reproduces the effective behavior of the pulse $(u^\eps, v^\eps)$, see Figure \ref{fig:limit_b}.
In this case we do not have a suitable guiding system at hand, however, we choose as initial condition the pulse solution of the guiding system \eqref{eq:num_guide} with the parameters $\beta_1 = 0.003$ and $\lambda_1 = 0.005$.
Since the pulse has to evolve from the non-matching initial condition, we solve this example on the bigger interval $x \in [-700,700]$. The step sizes are $\dd x \approx 0.0427$ for the $\eps$-system and $\dd x \approx 1.3685$ for the limit system.

\appendix

\section{Auxiliary estimates}
\label{app:estimates}

The following lemma gives a standard proof for $\rmL^\infty$-boundedness for solutions of parabolic equations.
\begin{lemma}
\label{lemma:max-bound}
Let Assumptions~\ref{assump:coeff} and \ref{assump:initial-1} hold.
Any solution $(u^\eps,v^\eps)$ of \eqref{eq:system-eps} satisfies
\begin{align*}
\Vert u^\eps(t) \Vert_{\rmL^\infty(\rr)} + \Vert v^\eps(t) \Vert_{\rmL^\infty(\rr)} \leq C e^{\kappa t}
\qquad \text{for } t \geq 0 ,
\end{align*}
where the constants $C, \kappa \geq 0$ are independent of $\eps$ and $t$. Indeed, $C$ depends on $\Vert u^\eps_0 \Vert_{\rmL^\infty(\rr)}$ and $\Vert u^\eps_0 \Vert_{\rmL^\infty(\rr)}$, and $\kappa$ depends on $\max \lbrace \Vert \alpha \Vert_{\rmL^\infty(\rs)}, \Vert \beta \Vert_{\rmL^\infty(\rs)}, \Vert b \Vert_{\rmL^\infty(\rs)} \rbrace$ as well as the growth conditions of $f$ in Assumption \ref{assump:coeff}.\ref{assump:coeff2}.
\end{lemma}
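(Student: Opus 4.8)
The plan is to obtain the bound by a comparison argument against a spatially constant super-solution. With the constants $c_1,\dots,c_4$ from Assumption~\ref{assump:coeff}.\ref{assump:coeff2}, set $A:=\max\{c_1,c_3\}$, $B:=\max\{c_2,c_4\}$, $\mu:=\max\{\|\alpha\|_{\rmL^\infty(\rs)},\|\beta\|_{\rmL^\infty(\rs)}\}$ and $\kappa:=A+\mu+1$. Let $\Phi\in\rmC^1([0,\infty))$ solve $\Phi'=\kappa\Phi+B$ with $\Phi(0)=\max\{\|u^\eps_0\|_{\rmL^\infty(\rr)},\|v^\eps_0\|_{\rmL^\infty(\rr)}\}$, so that $\Phi(t)\le(\Phi(0)+B/\kappa)e^{\kappa t}$. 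The lemma then follows with this $\kappa$ and $C=2(\Phi(0)+B/\kappa)$ once we show
\begin{equation*}
\|u^\eps(t)\|_{\rmL^\infty(\rr)}\le\Phi(t)\quad\text{and}\quad\|v^\eps(t)\|_{\rmL^\infty(\rr)}\le\Phi(t)\qquad\text{for all }t\in[0,T].
\end{equation*}

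To prove these two inequalities I would fix $\eta>0$, put $\Phi_\eta(t):=\Phi(t)+\eta e^{\lambda t}$ with $\lambda>\kappa$, and let $t^*\in(0,T]$ be the first time (if any) at which one of the four continuous functions $u^\eps\mp\Phi_\eta$, $v^\eps\mp\Phi_\eta$ stops being strictly signed; at $t=0$ all four are strictly signed since $\Phi(0)$ dominates the initial data and $\eta>0$. Because $u^\eps(t^*)\in\rmH^2(\rr)\subset\rmC_0(\rr)$ (and, in case~(a) of Assumption~\ref{assump:coeff}.\ref{assump:coeff1}, also $v^\eps(t^*)\in D(\cL_\eps)=\rmH^2(\rr)\subset\rmC_0(\rr)$) while $\Phi_\eta(t^*)>0$, the loss of strict sign occurs at a finite point $x^*$. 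Consider, say, $u^\eps(t^*,x^*)=\Phi_\eta(t^*)$: then $x^*$ is a spatial maximum of $u^\eps(t^*,\cdot)$, so $u^\eps_{xx}(t^*,x^*)\le0$, and $t^*$ being the first touching time forces $u^\eps_t(t^*,x^*)\ge\Phi_\eta'(t^*)$; inserting this into the $u^\eps$-equation and using $f(\Phi_\eta(t^*))\le c_3\Phi_\eta(t^*)+c_4$ (valid since $\Phi_\eta(t^*)\ge0$) together with $|v^\eps(t^*,x^*)|\le\Phi_\eta(t^*)$ gives $u^\eps_t(t^*,x^*)\le(A+\mu)\Phi_\eta(t^*)+B<\Phi_\eta'(t^*)$, a contradiction. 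The case $u^\eps(t^*,x^*)=-\Phi_\eta(t^*)$ is symmetric, using $f(u)\ge c_1u-c_2$ for $u\le0$. For $v^\eps(t^*,x^*)=\pm\Phi_\eta(t^*)$ one uses that at a spatial extremum $\big(\eps^2d(\tfrac{x^*}{\eps})v^\eps_x\big)_x=\eps^2d(\tfrac{x^*}{\eps})v^\eps_{xx}$ carries the favourable sign (as $d\ge0$) and that $-b(\tfrac{x^*}{\eps})v^\eps$ also has the favourable sign (as $b\ge0$, which is why $\|b\|_{\rmL^\infty(\rs)}$ need not appear in $\kappa$). Hence $t^*$ cannot exist, so $|u^\eps|,|v^\eps|<\Phi_\eta$ on $[0,T]\times\rr$, and letting $\eta\to0$ yields the claimed bounds.

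In case~(b) of Assumption~\ref{assump:coeff}.\ref{assump:coeff1} one has $D(\cL_\eps)=\rmL^2(\rr)$ and $v^\eps(t)$ need not decay pointwise; there I would bypass the maximum principle for the inhibitor by solving its equation, which for fixed $x$ is the scalar ODE $v^\eps_t=-b(\tfrac{x}{\eps})v^\eps+\beta(\tfrac{x}{\eps})u^\eps$, to get $|v^\eps(t,x)|\le\|v^\eps_0\|_{\rmL^\infty(\rr)}+\|\beta\|_{\rmL^\infty(\rs)}\int_0^t\|u^\eps(s)\|_{\rmL^\infty(\rr)}\dd s$, and then run the comparison argument above for $u^\eps$ alone; the two resulting estimates close via Gr\"onwall's lemma and again produce an exponential bound. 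Finally I would note that only the one-sided growth conditions of Assumption~\ref{assump:coeff}.\ref{assump:coeff2} enter, which the truncations $f_M$ inherit with the same constants, so the bound is uniform in $M$ as needed in the proof of Theorem~\ref{thm:sol-exist}.

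The only genuinely delicate point is the first-touching-time (comparison) argument on the unbounded line $\rr$: it relies on $u^\eps(t)$ and, in case~(a), $v^\eps(t)$ vanishing at $\pm\infty$ so that the touching point is finite, and on enough parabolic regularity to evaluate the equations pointwise at $(t^*,x^*)$; in case~(b) the explicit ODE representation for $v^\eps$ removes the decay issue altogether. Everything else is routine.
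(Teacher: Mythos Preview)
Your approach via a classical barrier/first-touching argument is genuinely different from the paper's. The paper proceeds by a Stampacchia-type truncation in the weak formulation: it sets $M(t)=\max\{1,\|u^\eps_0\|_{\rmL^\infty},\|v^\eps_0\|_{\rmL^\infty}\}e^{2\kappa t}$, tests the equations with $-(u^\eps+M)_-$, $(u^\eps-M)_+$ and the analogous truncations of $v^\eps$, and derives a Gr\"onwall inequality for the sum of the four $\rmL^2$-norms of these cut-offs, which then forces them all to vanish. This integral method completely sidesteps the issue you rightly flag as delicate: no pointwise evaluation of $u^\eps_{xx}$ or of $(\eps^2 d\, v^\eps_x)_x$ is needed, no finite touching point $x^*$ has to be located, and cases~(a) and~(b) of Assumption~\ref{assump:coeff}.\ref{assump:coeff1} are handled in one stroke without your separate ODE representation for $v^\eps$. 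Your comparison argument, on the other hand, is more transparent and yields a slightly tighter $\kappa$ when the required regularity is available; it is the natural route if one already knows the solutions are smooth enough to apply the strong maximum principle.

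One small correction: you invoke $b\ge0$ to claim that $-b(\tfrac{x^*}{\eps})v^\eps$ carries the favourable sign, but Assumption~\ref{assump:coeff} imposes no sign condition on $b$, and the lemma explicitly lists $\|b\|_{\rmL^\infty(\rs)}$ among the quantities on which $\kappa$ depends. The fix is immediate---include $\|b\|_{\rmL^\infty(\rs)}$ in your $\kappa$ (and, in case~(b), retain the factor $e^{\|b\|_{\rmL^\infty(\rs)}t}$ in front of $\|v^\eps_0\|_{\rmL^\infty(\rr)}$ in the ODE representation)---and your argument goes through as written.
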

\begin{proof}
For brevity we set $\alpha_\eps (x) : = \alpha (\tfrac{x}{\eps})$, etc., and define
\begin{align*}
M(t):= \max \lbrace 1, \Vert u^\eps_0 \Vert_{\rmL^\infty(\rr)}, \Vert v^\eps_0  \Vert_{\rmL^\infty(\rr)} \rbrace e^{2\kappa t} ,
\end{align*}
where $\kappa \in \rr$ is to be determined later.
We prove the lower bound $\min \lbrace u^\eps(t,x) ,  v^\eps(t,x) \rbrace \geq - M(t)$ and the upper bound $\max \lbrace u^\eps(t,x) ,  v^\eps(t,x) \rbrace \leq M(t)$ simultaneously.
First, we introduce the negative part for $\varphi \in C^0([0,T]; \rmL^2(\rr))$
\begin{align*}
(\varphi + M)_- (t,x) := \left\lbrace
\begin{array}{ll}
- (\varphi (t,x) + M(t)) & \text{ if } \varphi (t,x) \leq - M(t) \text{ for a.a.\ } x \in \rr , \\
0 & \text{ else}
\end{array} \right.
\end{align*}
and test the $u^\eps$- and $v^\eps$-equations in \eqref{eq:system-eps} with $- (u^\eps + M)_-$ and $-(v^\eps + M)_-$, respectively. Using $M_t = 2\kappa M$ and $M_x = 0$, integrating over $\rr$, and applying partial integration gives
\begin{align}
\label{eq:bound-1}
& \frac12 \frac{\dd}{\dd t} \left(
\Vert (u^\eps + M)_- \Vert^2_{\rmL^2(\rr)} +  \Vert (v^\eps + M)_- \Vert^2_{\rmL^2(\rr)} \right) \nonumber \\
& \leq \int_{\rr} \big\lbrace - \left( f(u^\eps) + \kappa M \right) (u^\eps + M)_-
- \left( -\alpha_\eps v^\eps + \kappa M \right) (u^\eps + M)_-  \nonumber \\
& \hspace{48pt}  - \left( - b_\eps v^\eps + \kappa M \right) (v^\eps + M)_-
- \left( \beta_\eps u^\eps + \kappa M \right) (v^\eps + M)_- \big\rbrace \dd x .
\end{align}
Secondly, we introduce the positive part
\begin{align*}
(\varphi - M)_+ (t,x) := \left\lbrace
\begin{array}{ll}
\varphi(t,x) - M(t) & \text{ if } \varphi (t,x) \geq M(t) \text{ for a.a.\ } x \in \rr , \\
0 & \text{ else}
\end{array} \right.
\end{align*}
and note that $(\varphi + M)_- \geq 0$ and $(\varphi - M)_+ \geq 0$ for all functions $\varphi \in C^0([0,T]; \rmL^2(\rr))$.
Testing \eqref{eq:system-eps} with $(u^\eps - M)_+$ and $(v^\eps - M)_+$ yields
\begin{align}
\label{eq:bound-2}
& \frac12 \frac{\dd}{\dd t} \left(
\Vert (u^\eps - M)_+ \Vert^2_{\rmL^2(\rr)} +  \Vert (v^\eps - M)_+ \Vert^2_{\rmL^2(\rr)} \right) \nonumber \\
& \leq \int_{\rr} \big\lbrace \left( f(u^\eps) - \kappa M \right) (u^\eps - M)_+
+ \left( -\alpha_\eps v^\eps - \kappa M \right) (u^\eps - M)_+ \nonumber \\
& \hspace{35pt} + \left( - b_\eps v^\eps - \kappa M \right) (v^\eps - M)_+
+ \left( \beta_\eps u^\eps - \kappa M \right) (v^\eps - M)_+  \big\rbrace \dd x .
\end{align}
Adding the estimates in \eqref{eq:bound-1} and \eqref{eq:bound-2} gives
\begin{align}
& \frac12 \frac{\dd}{\dd t} \left(
\Vert (u^\eps + M)_- \Vert^2_{\rmL^2(\rr)} +  \Vert (v^\eps + M)_- \Vert^2_{\rmL^2(\rr)} +
\Vert (u^\eps - M)_+ \Vert^2_{\rmL^2(\rr)} +  \Vert (v^\eps - M)_+ \Vert^2_{\rmL^2(\rr)} \right) \nonumber \\
& \leq \int_{\rr} \big\lbrace
- \left( f(u^\eps) + \kappa M \right) (u^\eps + M)_- +
\left( f(u^\eps) - \kappa M \right) (u^\eps - M)_+
\label{eq:bound-3a} \\
& \hspace{37pt} - \left( -\alpha_\eps v^\eps + \kappa M \right) (u^\eps + M)_-
+ \left( -\alpha_\eps v^\eps - \kappa M \right) (u^\eps - M)_+
\label{eq:bound-3b} \\
& \hspace{37pt} - \left( - b_\eps v^\eps + \kappa M \right) (v^\eps + M)_-
+ \left( - b_\eps v^\eps - \kappa M \right) (v^\eps - M)_+
\label{eq:bound-3c} \\
& \hspace{37pt} - \left( \beta_\eps u^\eps + \kappa M \right) (v^\eps + M)_-
+ \left( \beta_\eps u^\eps - \kappa M \right) (v^\eps - M)_+ \big\rbrace \dd x .
\label{eq:bound-3d}
\end{align}
The first term in \eqref{eq:bound-3a} is controlled as follows: $(u^\eps + M)_- > 0$ implies $u < 0$ and according to Assumption \ref{assump:coeff}.\ref{assump:coeff2} $f(u) \geq c_1 u - c_2$ with $c_1,c_2 \geq 0$. If $\kappa \geq \max \lbrace 2c_1, 2c_2 \rbrace$, then we have
\begin{align*}
- \left( f(u^\eps) + \kappa M \right) (u^\eps + M)_-
& \leq - (c_1 u + \tfrac{\kappa}{2} M) (u^\eps + M)_-
+ (c_2 - \tfrac{\kappa}{2}M)(u^\eps + M)_-  \\
& \leq \kappa |(u^\eps + M)_-|^2 .
\end{align*}
Analogously, the second term in \eqref{eq:bound-3a} is bounded by $\kappa |(u^\eps - M)_+|^2 $ for $\kappa \geq \max \lbrace 2c_3, 2c_4 \rbrace$. In the same manner we obtain that, if $\kappa \geq \Vert b \Vert_{\rmL^\infty(\rs)}$, then the sum of both terms in \eqref{eq:bound-3c} is bounded by $\kappa |(v^\eps + M)_-|^2 + \kappa |(v^\eps - M)_+|^2$. The mixed terms in \eqref{eq:bound-3b} can be controlled for $\kappa \geq \Vert \alpha \Vert_{\rmL^\infty(\rs)}$ via
\begin{align*}
& - \left( -\alpha_\eps v^\eps + \kappa M \right) (u^\eps + M)_-
+ \left( -\alpha_\eps v^\eps - \kappa M \right) (u^\eps - M)_+  \\
& \leq \kappa (|v^\eps| - M) \big( (u^\eps + M)_- + (u^\eps - M)_+ \big) \\
& \leq \left\lbrace
\begin{array}{ll}
0 & \text{ if } |v^\eps| < M ,\\
\kappa (v^\eps + M)_- \big( (u^\eps + M)_- + (u^\eps - M)_+ \big) & \text{ if } v^\eps \leq -M ,\\
\kappa (v^\eps - M)_+ \big( (u^\eps + M)_- + (u^\eps - M)_+ \big) & \text{ if } v^\eps \geq M \\
\end{array}
\right. \\
& \leq \kappa \left( |(v^\eps + M)_-|^2 + |(v^\eps - M)_+|^2 + |(u^\eps + M)_-|^2 + |(u^\eps - M)_+|^2 \right) .
\end{align*}
The mixed terms in \eqref{eq:bound-3d} are treated analogously.

Overall, choosing $\kappa = \max \lbrace 2c_1, 2c_2, 2c_3, 2c_4, \Vert \alpha \Vert_{\rmL^\infty(\rs)}, \Vert \beta \Vert_{\rmL^\infty(\rs)}, \Vert b \Vert_{\rmL^\infty(\rs)} \rbrace$ gives
\begin{align*}
& \frac12 \frac{\dd}{\dd t} \left(
\Vert (u^\eps + M)_- \Vert^2_{\rmL^2(\rr)} +  \Vert (v^\eps + M)_- \Vert^2_{\rmL^2(\rr)} +
\Vert (u^\eps - M)_+ \Vert^2_{\rmL^2(\rr)} +  \Vert (v^\eps - M)_+ \Vert^2_{\rmL^2(\rr)} \right) \\
& \leq 3 \kappa \left( \Vert (u^\eps + M)_- \Vert^2_{\rmL^2(\rr)} +  \Vert (v^\eps + M)_- \Vert^2_{\rmL^2(\rr)} +
\Vert (u^\eps - M)_+ \Vert^2_{\rmL^2(\rr)} +  \Vert (v^\eps - M)_+ \Vert^2_{\rmL^2(\rr)} \right) .
\end{align*}
By construction, the initial conditions satisfy $(u^\eps + M)_-(0,x) = (u^\eps - M)_+(0,x) = 0$ and $(v^\eps + M)_-(0,x) = (v^\eps - M)_+(0,x) = 0$ almost everywhere in $\rr$. Therefore, the application of Gr\"onwall's lemma implies $(u^\eps +M)_-(t,x) = (u^\eps - M)_+(t,x) = 0$ and $(v^\eps + M)_-(t,x) = (v^\eps - M)_+(t,x) = 0$ for all $t\geq 0$ and almost all $x\in\rr$. Hence, the desired $\rmL^\infty(\rr)$-bound holds uniformly with respect to $\eps$.
\end{proof}

\begin{rem}
\label{rem:A1}
With the same argumentation as in the proof of Lemma \ref{lemma:max-bound}, we obtain that any solution $(U,V)$ of \eqref{eq:system-lim} satisfies
\begin{align*}
\Vert U(t) \Vert_{\rmL^\infty(\rr)} + \Vert V(t) \Vert_{\rmL^\infty(\rr\times\rs)} \leq C e^{\kappa t}
\qquad \text{for } t \geq 0 ,
\end{align*}
where $C$ depends on $\Vert U_0 \Vert_{\rmL^\infty(\rr)}$ and $\Vert V_0 \Vert_{\rmL^\infty(\rr\times\rs)}$, and $\kappa$ is as in Lemma \ref{lemma:max-bound}.
\end{rem}

For completeness, we give the proof of the next lemma, which follows along the lines of \cite[Lem.~4.1]{Eck04}.
\begin{lemma}
\label{lemma:eck}
For every $g \in \rmH^1(\rr; \rmL^2(\rs))$, we set $\bar g(x) : = \int_0^1 g(x,y) \dd y $.
Then, the dual norm of $\RR g - \bar{g}$ is bounded via
\begin{align*}
\Vert \RR g - \bar g \Vert_{\rmH^1(\rr)^*} \leq \eps \Vert g \Vert_{\rmH^1(\rr; \rmL^2(\rs))} .
\end{align*}
\end{lemma}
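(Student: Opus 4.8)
The plan is to write the cell-oscillation $g-\bar g$ as a $y$-derivative of a periodic primitive, to convert the oscillatory integral of $\RR g-\bar g$ against a test function into a total $x$-derivative plus a remainder of the same structure, to integrate by parts, and to estimate everything in $\rmL^2(\rr)$; the sole genuinely two-scale input is a pointwise-in-$x$ Poincar\'e--Sobolev inequality on the periodicity cell.

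Concretely, for a.e.\ $x\in\rr$ let $G(x,\cdot)\in\rmH^1(\rs)$ be the unique $1$-periodic, $y$-mean-zero primitive of $g(x,\cdot)-\bar g(x)$, so that $\partial_y G=g-\bar g$ and $\int_0^1 G(x,y)\dd y=0$. Taking the $y$-primitive is a bounded linear map $\rmL^2(\rs)\to\rmH^1(\rs)$ that commutes with $\partial_x$; hence $G\in\rmH^1(\rr;\rmH^1(\rs))\subset\rmL^2(\rr;\rmC^0(\rs))$, and $\partial_x G(x,\cdot)$ is the $y$-mean-zero primitive of $\partial_x g(x,\cdot)-\overline{\partial_x g}(x)$. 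In particular $\RR G$ and $\RR(\partial_x G)$ are well defined. The first step is the identity
\[
\int_\rr(\RR g-\bar g)\,\psi\dd x
= -\eps\int_\rr (\RR G)\,\psi'\dd x - \eps\int_\rr \RR(\partial_x G)\,\psi\dd x
\qquad\text{for all } \psi\in\rmC^\infty_c(\rr),
\]
which for smooth $g$ follows from the chain rule $\partial_y G(x,\tfrac{x}{\eps})=\eps\tfrac{\dd}{\dd x}\big[G(x,\tfrac{x}{\eps})\big]-\eps(\partial_x G)(x,\tfrac{x}{\eps})$ together with integration by parts (no boundary terms, as $\psi$ has compact support), and for general $g\in\rmH^1(\rr;\rmL^2(\rs))$ by density, the right-hand side being continuous in $g$ with respect to that norm.

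The key quantitative ingredient is the one-dimensional Sobolev embedding on $\rs$: for $y$-mean-zero $h\in\rmH^1(\rs)$ one has $\|h\|_{\rmC^0(\rs)}\le\tfrac{1}{2\sqrt3}\|h_y\|_{\rmL^2(\rs)}$ (e.g.\ from the Fourier series, using $\sum_{k\ne0}(2\pi k)^{-2}=\tfrac{1}{12}$). Applying this to $G(x,\cdot)$ and to $\partial_x G(x,\cdot)$, using $|(\RR G)(x)|\le\|G(x,\cdot)\|_{\rmC^0(\rs)}$ and the corresponding bound for $\partial_x G$, and integrating in $x$ gives
\[
\|\RR G\|_{\rmL^2(\rr)}^2+\|\RR(\partial_x G)\|_{\rmL^2(\rr)}^2
\le \tfrac{1}{12}\Big(\|g-\bar g\|_{\rmL^2(\rr\times\rs)}^2+\|\partial_x g-\overline{\partial_x g}\|_{\rmL^2(\rr\times\rs)}^2\Big)
\le \tfrac{1}{12}\,\|g\|_{\rmH^1(\rr;\rmL^2(\rs))}^2,
\]
where the last inequality uses $\|h-\bar h\|_{\rmL^2(\rs)}\le\|h\|_{\rmL^2(\rs)}$ for the $y$-average. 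Combining the identity with a Cauchy--Schwarz inequality in $\rr^2$ against the pair $\big(\|\psi'\|_{\rmL^2(\rr)},\|\psi\|_{\rmL^2(\rr)}\big)$ and $\|\psi\|_{\rmH^1(\rr)}^2=\|\psi\|_{\rmL^2(\rr)}^2+\|\psi'\|_{\rmL^2(\rr)}^2$ yields $\big|\int_\rr(\RR g-\bar g)\psi\dd x\big|\le\tfrac{\eps}{2\sqrt3}\|g\|_{\rmH^1(\rr;\rmL^2(\rs))}\|\psi\|_{\rmH^1(\rr)}$ for all $\psi\in\rmC^\infty_c(\rr)$, hence for all $\psi\in\rmH^1(\rr)$ by density. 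Taking the supremum over $\|\psi\|_{\rmH^1(\rr)}\le1$ and bounding $\tfrac{1}{2\sqrt3}\le1$ gives the claim.

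The only delicate point is the diagonal restriction $\RR G$: one cannot restrict $G(x,\cdot)\in\rmL^2(\rs)$ pointwise in $y$, but after the single $y$-integration $G(x,\cdot)$ lies in $\rmH^1(\rs)\subset\rmC^0(\rs)$ with cell-Sobolev norm controlled by $g$, which is exactly what legitimises both the definition of $\RR G$ and its $\rmL^2(\rr)$-bound; the chain-rule identity, the integration by parts, and the density steps are routine. An equivalent route replaces $\RR$ by the unfolding operator $\cT_\eps$ and argues cell by cell, in the spirit of the remark after Theorem~\ref{thm:error-est}, but the primitive-based argument above is shorter.
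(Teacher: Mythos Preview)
Your proof is correct and takes a genuinely different route from the paper's. The paper works cell by cell: it splits $\rr$ into intervals $[\eps n,\eps(n{+}1)]$, introduces on each cell a second dummy variable $\tilde y$ so that both $\RR g$ and $\bar g$ become double integrals over $(0,1)^2$, gains the factor $\eps$ from the fundamental theorem of calculus in the \emph{macroscopic} variable via $g(\eps y,y)-g(\eps\tilde y,y)=\eps\int_0^1 g_x(\eps y t+(1{-}t)\eps\tilde y,y)(y-\tilde y)\dd t$, and then applies Cauchy--Schwarz and sums over cells. You instead work globally: you absorb the oscillation into a $y$-mean-zero primitive $G$ with $\partial_y G=g-\bar g$, use the chain rule to rewrite $\RR g-\bar g$ as $\eps\,\partial_x(\RR G)-\eps\,\RR(\partial_x G)$, integrate by parts against $\psi$, and control $\RR G$ and $\RR(\partial_x G)$ pointwise in $x$ via the embedding $\rmH^1(\rs)\hookrightarrow\rmC^0(\rs)$ with the sharp constant $1/(2\sqrt{3})$. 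Your route is the classical corrector\,/\,oscillating-test-function device in periodic homogenization; it yields a better constant and avoids the cell bookkeeping, at the cost of invoking the one-dimensional Sobolev embedding on $\rs$ and a density step. The paper's route, following~\cite{Eck04}, is entirely elementary (no Fourier series, no embedding) and manipulates $g\in\rmH^1(\rr;\rmL^2(\rs))$ directly without approximation.
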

\begin{proof}
We consider for arbitrary $\varphi \in \rmC^\infty_\mathrm{c} (\rr)$
\begin{align*}
\int_\rr (\RR g - \bar g) \varphi \dd x = \sum_{n \in \mathbb{Z}} \int_{\eps n}^{\eps(n+1)} (\RR g - \bar g) \varphi \dd x .
\end{align*}
Without loss of generality we set $n = 0$. Using the variable substitutions $x = \eps y$ and $x = \eps \tilde y$ gives
\begin{align*}
& \int_0^\eps g(x,\tfrac{x}{\eps}) \varphi(x) \dd x =  \eps \int_0^1 g(\eps y, y) \varphi(\eps y) \dd y
=  \eps \int_0^1 \int_0^1 g(\eps y, y) \varphi(\eps y) \dd y \dd \tilde y ,\\
& \int_0^\eps \bar g (x) \varphi(x) \dd x = \int_0^\eps \int_0^1  g (x,y) \varphi(x) \dd y \dd x
= \eps \int_0^1 \int_0^1  g (\eps \tilde y,y) \varphi(\eps \tilde y) \dd y \dd \tilde y .
\end{align*}
Subtracting both integrals and rearranging the integrands yields
\begin{align*}
\int_{0}^{\eps} (\RR g - \bar g) \varphi \dd x
& = \eps \int_{(0,1)^2} \left( g(\eps y , y) - g(\eps \tilde y , y)\right) \varphi(\eps y)
+ g(\eps \tilde y, y)\left( \varphi(\eps y) - \varphi(\eps \tilde y) \right)  \dd y\dd \tilde y .
\end{align*}
Exploiting the fundamental theorem of calculus
\begin{align*}
g(\eps y , y) - g(\eps \tilde y , y) = \eps \int_0^1 g_x(\eps  y t + (1 - t) \eps \tilde y , y) (y - \tilde y) \dd t
\end{align*}
as well as the variable transform
\begin{align*}
(t, \xi, \eta) = (t, t y + (1 - t) \tilde y , y - \tilde y)
\quad\text{with}\quad
\left\vert \mathrm{det} \left( \frac{\partial(t,\xi,\eta)}{\partial(t,y,\tilde y)} \right) \right\vert = 1 ,
\end{align*}
where $(t,\xi) \in (0,1)^2$ and $\eta \in (-1,1)$, yields with the Cauchy--Bunyakovsky--Schwarz inequality
\begin{align*}
\left\vert \int_{0}^{\eps} (\RR g - \bar g) \varphi \dd x  \right\vert
& \leq \eps^2 \left( \int_{(0,1)^2} \int_{-1}^1 |g_x(\eps \xi , \xi + (1 - t) \eta) \eta|^2 \dd t \dd \xi \dd \eta \right)^{\frac12}
\left( \int_0^1 |\varphi(\eps y)|^2 \dd y \right)^{\frac12} \\
& \quad + \eps^2 \left( \int_0^1 \int_{-1}^1 |\varphi_x(\eps \xi ) \eta|^2 \dd \xi \dd \eta \right)^{\frac12}
\left( \int_{(0,1)^2} |g(\eps \tilde y , y)|^2 \dd \tilde y \dd y \right)^{\frac12} \\
& \leq \eps^2 4 \Vert g \Vert_{\rmH^1((0,\eps); \rmL^2(\rs))} \Vert \varphi \Vert_{\rmH^1(0,\eps )}  .
\end{align*}
Summing up over all $n \in \mathbb{Z}$ and recalling the dense embedding of $\rmC^\infty_\mathrm{c} (\rr)$ into $\rmH^1(\rr)$ gives the desired estimate.
\end{proof}

\textbf{Acknowledgment.} The authors thank  Shalva Amiranashvili, Annegret Glitzky, Christian K\"uhn, and Alexander Mielke for helpful discussions and comments.
The research of S.R.\ was supported by \emph{Deutsche Forschungsgesellschaft} within SFB 910
\emph{Control of self-organizing nonlinear systems: Theoretical methods and concepts of application} via the project A5  \emph{Pattern formation in systems with multiple scales}.
The research of P.G.\ was supported by the DFG Heisenberg Programme, DFG project SFB 910,
and the Ministry of Education and Science of Russian Federation (agreement 02.a03.21.0008).

\footnotesize
\bibliographystyle{my_alpha}
\bibliography{Bib_Sina}



@Article{LNW02,
  author = 	 {D. Lukkassen and G. Nguetseng and P. Wall},
  title = 	 {Two-scale convergence},
  journal = 	 {Int. J. Pure Appl. Math.},
  year = 	 {2002},
  OPTkey = 	 {},
  volume = 	 {2},
  OPTnumber = 	 {},
  pages = 	 {35--86},
  OPTmonth = 	 {},
  OPTnote = 	 {},
  OPTannote = 	 {}
}

@Book{DalM93,
   author = 	 {{Dal Maso}, G.},
  title = 	 {An Introduction to {$\Gamma$}-Convergence},
 PUBLISHER = {Birkh\"auser Boston Inc.},
   ADDRESS = {Boston, MA},
      YEAR = {1993}
}

@Article{MT07,
  author = 	 {A. Mielke and A. Timofte},
  title = 	 {Two-scale homogenization for evolutionary variational inequalities via the energetic formulation},
  journal = 	 {SIAM J. Math. Anal.},
  year = 	 {2007},
  OPTkey = 	 {},
  volume = 	 {39},
  number = 	 {2},
  pages = 	 {642--668},
  OPTmonth = 	 {},
  OPTnote = 	 {},
  OPTannote = 	 {}
}

@Article{Han11,
  author = 	 {H. Hanke},
  title = 	 {Homogenization in gradient plasticity},
  journal = 	 {Math. Models Methods Appl. Sci.},
  year = 	 {2011},
  OPTkey = 	 {},
  volume = 	 {21},
  OPTnumber = 	 {},
  pages = 	 {1651--1684},
  OPTmonth = 	 {},
  OPTnote = 	 {},
  OPTannote = 	 {}
}

@Article{CDG02,
  author = 	 {D. Cioranescu and A. Damlamian and G. Griso},
  title = 	 {Periodic unfolding and homogenization},
  journal = 	 {C. R. Math. Acad. Sci. Paris},
  year = 	 {2002},
  OPTkey = 	 {},
  volume = 	 {335},
  number = 	 {1},
  pages = 	 {99--104},
  OPTmonth = 	 {},
  OPTnote = 	 {},
  OPTannote = 	 {}
}

@Article{Ngu89,
  author = 	 {G. Nguetseng},
  title = 	 {A general convergence result for a functional related to the theory of homogenization},
  journal = 	 {SIAM J. Math. Anal.},
  year = 	 {1989},
  OPTkey = 	 {},
  volume = 	 {20},
  number = 	 {3},
  pages = 	 {608--623},
  OPTmonth = 	 {},
  OPTnote = 	 {},
  OPTannote = 	 {}
}

@Article{Dam05,
  author = 	 {A. Damlamian},
  title = 	 {An elementary introduction to Periodic Unfolding},
  journal = 	 {Math. Sci. Appl.},
  year = 	 {2005},
  OPTkey = 	 {},
  volume = 	 {24},
  OPTnumber = 	 {},
  pages = 	 {119--136},
  OPTmonth = 	 {},
  OPTnote = 	 {},
  OPTannote = 	 {}
}

@Article{PB08,
  author = 	 {Peter, M. A. and B\"ohm, M.},
  title = 	 {Different choices of scaling in homogenization of diffusion and interfacial exchange in a porous medium},
  journal = 	 {Math. Meth. Appl. Sci.},
  year = 	 {2008},
  OPTkey = 	 {},
  volume = 	 {24},
  OPTnumber = 	 {},
  pages = 	 {119--136},
  OPTmonth = 	 {},
  OPTnote = 	 {},
  OPTannote = 	 {}
}

@Book{Tem88,
    AUTHOR = {Temam, R.},
     TITLE = {Infinite-dimensional dynamical systems in mechanics and
              physics},
 PUBLISHER = {Springer-Verlag},
   ADDRESS = {New York},
      YEAR = {1988}
}

@Book{Eva98,
  AUTHOR = {Evans, L. C.},
     TITLE = {Partial differential equations},
    SERIES = {Graduate Studies in Mathematics},
    VOLUME = {19},
 PUBLISHER = {American Mathematical Society},
   ADDRESS = {Providence, RI},
      YEAR = {1998}
}

@Book{EvGa92,
  AUTHOR = {Evans, L. C. and Gariepy, R. F.},
     TITLE = {Measure theory and fine properties of functions},
    SERIES = {Studies in Advanced Mathematics},
    VOLUME = {},
 PUBLISHER = {CRC Press},
   ADDRESS = {Boca Raton, FL},
      YEAR = {1992}
}

@Article{HJM94,
  author = 	 {Hornung, U. and J{\"a}ger, W. and Mikeli{\'c}, A.},
  title = 	 {Reactive transport through an array of cells with semi-permeable membranes},
  journal = 	 {RAIRO Mod\'el. Math. Anal. Num\'er.},
  year = 	 {1994},
  OPTkey = 	 {},
  volume = 	 {31},
  OPTnumber = 	 {},
  pages = 	 {1257--1285},
  OPTmonth = 	 {},
  OPTnote = 	 {},
  OPTannote = 	 {}
}


@Article{All92,
  author = 	 {G. Allaire},
  title = 	 {Homogenization and two-scale convergence},
  journal = 	 {SIAM J. Math. Anal.},
  year = 	 {1992},
  OPTkey = 	 {},
  volume = 	 {23},
  number = 	 {1},
  pages = 	 {1482--1518},
  OPTmonth = 	 {},
  OPTnote = 	 {},
  OPTannote = 	 {}
}

@Article{Vis04,
  author = 	 {A. Visintin},
  title = 	 {Some properties of two-scale convergence},
  journal = 	 {Atti Acad. Naz. Lincei Cl. Sci. Fis. MAt. Natur. Rend. Lincei (9) Mat. Appl.},
  year = 	 {2004},
  OPTkey = 	 {},
  volume = 	 {15},
  number = 	 {1},
  pages = 	 {93--107},
  OPTmonth = 	 {},
  OPTnote = 	 {},
  OPTannote = 	 {}
}

@Book{BLP78,
  AUTHOR = {Bensoussan, A. and Lions, J.-L. and Papanicolaou, G.},
     TITLE = {Asymptotic analysis for periodic structures},
    SERIES = {Studies in Mathematics and its Applications},
    VOLUME = {5},
 PUBLISHER = {North-Holland Publishing Co.},
   ADDRESS = {Amsterdam},
      YEAR = {1978}
}

@Book{Maz11,
  AUTHOR = {Maz'ya, V.},
     TITLE = {Sobolev spaces with applications to elliptic partial
              differential equations},
    SERIES = {Grundlehren der Mathematischen Wissenschaften [Fundamental
              Principles of Mathematical Sciences]},
    VOLUME = {342},
 PUBLISHER = {Springer},
   ADDRESS = {Heidelberg},
      YEAR = {2011}
}

@Article{CDZ06,
  author = 	 {Cioranescu, D. and Donato, P. and Zaki, R.},
  title = 	 {The periodic unfolding method in perforated domains},
  journal = 	 {Port. Math. (N.S.)},
  year = 	 {2006},
  OPTkey = 	 {},
  volume = 	 {63},
  number = 	 {4},
  pages = 	 {467--496},
  OPTmonth = 	 {},
  OPTnote = 	 {},
  OPTannote = 	 {}
}

@Article{MR13,
  author = 	 {Mielke, A. and Rohan, E.},
  title = 	 {Homogenization of elastic waves in fluid-saturated porous media using the Biot model},
  journal = 	 {Math. Models Methods Appl. Sci.},
  year = 	 {2013},
  OPTkey = 	 {},
  volume = 	 {23},
  OPTnumber = 	 {},
  pages = 	 {873--916},
  OPTmonth = 	 {},
  OPTnote = 	 {},
  OPTannote = 	 {}
}

@Book{Paz83,
    AUTHOR = {Pazy, A.},
     TITLE = {Semigroups of linear operators and applications to partial
              differential equations},
    SERIES = {Applied Mathematical Sciences},
    VOLUME = {44},
 PUBLISHER = {Springer-Verlag},
   ADDRESS = {New York},
      YEAR = {1983}
}

@Book{Hen81,
    AUTHOR = {Henry, D.},
     TITLE = {Geometric theory of semilinear parabolic equations},
    SERIES = {Lecture Notes in Mathematics},
    VOLUME = {840},
 PUBLISHER = {Springer-Verlag},
   ADDRESS = {Berlin},
      YEAR = {1981}
}

@incollection {MT97,
    AUTHOR = {Murat, F. and Tartar, L.},
     TITLE = {{$H$}-convergence},
 BOOKTITLE = {Topics in the mathematical modelling of composite materials},
    SERIES = {Progr. Nonlinear Differential Equations Appl.},
    VOLUME = {31},
     PAGES = {21--43},
 PUBLISHER = {Birkh\"auser},
   ADDRESS = {Boston, MA},
      YEAR = {1997}
}

@Book{Pet07,
    AUTHOR = {Peter, M. A.},
     TITLE = {Coupled reaction-diffuion systems and evolving microstructure: mathematical modeling and homogenization},
    SERIES = {PhD dissertation},
    VOLUME = {},
 PUBLISHER = {Logos Verlag},
   ADDRESS = {Berlin},
      YEAR = {2007}
}

@article {NRJ07,
    AUTHOR = {Neuss-Radu, M. and J{\"a}ger, W.},
     TITLE = {Effective transmission conditions for reaction-diffusion
              processes in domains separated by an interface},
   JOURNAL = {SIAM J. Math. Anal.},
    VOLUME = {39},
      YEAR = {2007},
    NUMBER = {3},
     PAGES = {687--720}
}

@article {CDG08,
    AUTHOR = {Cioranescu, D. and Damlamian, A. and Griso, G.},
     TITLE = {The periodic unfolding method in homogenization},
   JOURNAL = {SIAM J. Math. Anal.},
    VOLUME = {40},
      YEAR = {2008},
    NUMBER = {4},
     PAGES = {1585--1620}
}

@incollection {MM10,
    AUTHOR = {Meier, S. A. and Muntean, A.},
     TITLE = {A two-scale reaction-diffusion system: homogenization and
              fast-reaction limits},
 BOOKTITLE = {Current advances in nonlinear analysis and related topics},
    SERIES = {GAKUTO Internat. Ser. Math. Sci. Appl.},
    VOLUME = {32},
     PAGES = {443--461},
 PUBLISHER = {Gakk\=otosho},
   ADDRESS = {Tokyo},
      YEAR = {2010}
}

@article {EKKP10,
    AUTHOR = {Essel, E. K. and Kuliev, K. and Kulieva, G.
              and Persson, L.-E.},
     TITLE = {Homogenization of quasilinear parabolic problems by the method
              of {R}othe and two scale convergence},
   JOURNAL = {Appl. Math.},
    VOLUME = {55},
      YEAR = {2010},
    NUMBER = {4},
     PAGES = {305--327}
}

@article {Vis11,
    AUTHOR = {Visintin, A.},
     TITLE = {Homogenization of a parabolic model of ferromagnetism},
   JOURNAL = {J. Differential Equations},
    VOLUME = {250},
      YEAR = {2011},
    NUMBER = {3},
     PAGES = {1521--1552}
}

@article {Wou10,
    AUTHOR = {Woukeng, J. L.},
     TITLE = {Periodic homogenization of nonlinear non-monotone parabolic
              operators with three time scales},
   JOURNAL = {Ann. Mat. Pura Appl. (4)},
    VOLUME = {189},
      YEAR = {2010},
    NUMBER = {3},
     PAGES = {357--379}
}

@article {FO06,
    AUTHOR = {Flod{\'e}n, L. and Olsson, M.},
     TITLE = {Reiterated homogenization of some linear and nonlinear
              monotone parabolic operators},
   JOURNAL = {Can. Appl. Math. Q.},
    VOLUME = {14},
      YEAR = {2006},
    NUMBER = {2},
     PAGES = {149--183}
}

@article {Per12,
    AUTHOR = {Persson, J.},
     TITLE = {Homogenization of monotone parabolic problems with several
              temporal scales},
   JOURNAL = {Appl. Math.},
    VOLUME = {57},
      YEAR = {2012},
    NUMBER = {3},
     PAGES = {191--214}
}

@article {FMP12,
    AUTHOR = {Fatima, T. and Muntean, A. and Ptashnyk, M.},
     TITLE = {Unfolding-based corrector estimates for a reaction-diffusion
              system predicting concrete corrosion},
   JOURNAL = {Appl. Anal.},
    VOLUME = {91},
      YEAR = {2012},
    NUMBER = {6},
     PAGES = {1129--1154}
}

@article {Neu96,
    AUTHOR = {Neuss-Radu, M.},
     TITLE = {Some extensions of two-scale convergence},
   JOURNAL = {C. R. Acad. Sci. Paris S\'er. I Math.},
    VOLUME = {322},
      YEAR = {1996},
    NUMBER = {9},
     PAGES = {899--904}
}

@article {Vis07,
    AUTHOR = {Visintin, A.},
     TITLE = {Two-scale convergence of some integral functionals},
   JOURNAL = {Calc. Var. Partial Differential Equations},
    VOLUME = {29},
      YEAR = {2007},
    NUMBER = {2},
     PAGES = {239--265}
}
		
@article {Eck04,
    AUTHOR = {Eck, C.},
     TITLE = {Homogenization of a phase field model for binary mixtures},
   JOURNAL = {Multiscale Model. Simul.},
    VOLUME = {3},
      YEAR = {2004/05},
    NUMBER = {1},
     PAGES = {1--27 (electronic)}
}
		
@article {ADH90,
    AUTHOR = {Arbogast, T. and Douglas, Jr., J. and Hornung, U.},
     TITLE = {Derivation of the double porosity model of single phase flow
              via homogenization theory},
   JOURNAL = {SIAM J. Math. Anal.},
    VOLUME = {21},
      YEAR = {1990},
    NUMBER = {4},
     PAGES = {823--836}
}
		
		
@article {HOS12,
    AUTHOR = {Henning, P. and Ohlberger, M. and Schweizer, B.},
     TITLE = {Homogenization of the degenerate two-phase flow equation},
   JOURNAL = {Preprint 2012-04 TU Dortmund},
    VOLUME = {},
      YEAR = {2012}
}

@article {Pta13,
    AUTHOR = {Ptashnyk, M.},
     TITLE = {Two-scale convergence for locally periodic
              microstructures and homogenization of plywood structures},
   JOURNAL = {Multiscale Model. Simul.},
    VOLUME = {11},
      YEAR = {2013},
    NUMBER = {1},
     PAGES = {92--117}
}
	

@Book{Elst02MI,
		author    = {J.\ Elstrodt},
		title     = {Ma{\ss}- und Integrationstheorie},
		publisher = {Springer-Verlag},
			ADDRESS = {Berlin},
		year      = {2002},
		edition   = {3rd}
}

@article {Pie10,
    AUTHOR = {Pierre, M.},
     TITLE = {Global existence in reaction-diffusion systems with control of
              mass: a survey},
   JOURNAL = {Milan J. Math.},
    VOLUME = {78},
      YEAR = {2010},
    NUMBER = {2},
     PAGES = {417--455}
}
		
@article {BH03,
    AUTHOR = {Bothe, D. and Hilhorst, D.},
     TITLE = {A reaction-diffusion system with fast reversible reaction},
   JOURNAL = {J. Math. Anal. Appl.},
    VOLUME = {286},
      YEAR = {2003},
    NUMBER = {1},
     PAGES = {125--135}
}

@article {GH97,
    AUTHOR = {Glitzky, A. and H{\"u}nlich, R.},
     TITLE = {Global estimates and asymptotics for
              electro-reaction-diffusion systems in heterostructures},
   JOURNAL = {Appl. Anal.},
    VOLUME = {66},
      YEAR = {1997},
    NUMBER = {3-4},
     PAGES = {205--226}
}

@book {MRS90,
    AUTHOR = {Markowich, P. A. and Ringhofer, C. A. and Schmeiser, C.},
     TITLE = {Semiconductor equations},
 PUBLISHER = {Springer-Verlag},
   ADDRESS = {Vienna},
      YEAR = {1990}
}


@book {Mur02,
    AUTHOR = {Murray, J. D.},
     TITLE = {Mathematical biology. {I}},
    SERIES = {Interdisciplinary Applied Mathematics},
    VOLUME = {17},
   EDITION = {3rd},
      NOTE = {An introduction},
 PUBLISHER = {Springer-Verlag},
   ADDRESS = {New York},
      YEAR = {2002}
}

@book {CD99,
    AUTHOR = {Cioranescu, D. and Donato, P.},
     TITLE = {An introduction to homogenization},
    SERIES = {Oxford Lecture Series in Mathematics and its Applications},
    VOLUME = {17},
 PUBLISHER = {The Clarendon Press Oxford University Press},
   ADDRESS = {New York},
      YEAR = {1999}
}

@article {Schw08,
    AUTHOR = {Schweizer, Ben},
     TITLE = {Homogenization of degenerate two-phase flow equations with oil
              trapping},
   JOURNAL = {SIAM J. Math. Anal.},
    VOLUME = {39},
      YEAR = {2008},
    NUMBER = {6},
     PAGES = {1740--1763}
}

@article {FV01,
    AUTHOR = {Fiedler, B. and Vishik, M.I.},
     TITLE = {Quantitative homogenization of analytic semigroups and
              reaction-diffusion equations with {D}iophantine spatial
              frequencies},
   JOURNAL = {Adv. Differential Equations},
    VOLUME = {6},
      YEAR = {2001},
    NUMBER = {11},
     PAGES = {1377--1408}
}

@article {FV03,
    AUTHOR = {Fiedler, B. and Vishik, M.I.},
     TITLE = {Quantitative homogenization of global attractors for
              reaction-diffusion systems with rapidly oscillating terms},
   JOURNAL = {Asymptot. Anal.},
    VOLUME = {34},
      YEAR = {2003},
    NUMBER = {2},
     PAGES = {159--185}
}

@phdthesis{Diss-SR,
AUTHOR  = {Reichelt, S.},
TITLE   = {Two-scale homogenization of systems of nonlinear parabolic equations},
SCHOOL  = {Humboldt-Universit\"at zu Berlin},
YEAR    = {2015},
NOTE    = {http://edoc.hu-berlin.de/dissertationen/reichelt-sina-2015-11-27/PDF/reichelt.pdf}
}

@phdthesis{Mah13,
    AUTHOR = {Mahato, H. S.},
     TITLE = {Homogenization of a system of nonlinear multi-species diffusion-reaction equations in an $H^{1,p}$ setting},
    SCHOOL = {Universit\"at Bremen},
      YEAR = {2013}
}

@article {Emm99,
    AUTHOR = {Emmrich, E. },
     TITLE = {Discrete versions of Gronwall's lemma and their application to the numerical analysis of parabolic equations},
   JOURNAL = {Preprint No. 637, TU Berlin},
    VOLUME = {34},
      YEAR = {1999},
    NUMBER = {2},
     PAGES = {159--185}
}

@article {MRT14,
    AUTHOR = {Mielke, Alexander and Reichelt, Sina and Thomas, Marita},
     TITLE = {Two-scale homogenization of nonlinear reaction-diffusion
              systems with slow diffusion},
   JOURNAL = {Netw. Heterog. Media},
    VOLUME = {9},
      YEAR = {2014},
    NUMBER = {2},
     PAGES = {353--382}
}

@article {Gri04,
    AUTHOR = {Griso, Georges},
     TITLE = {Error estimate and unfolding for periodic homogenization},
   JOURNAL = {Asymptot. Anal.},
  FJOURNAL = {Asymptotic Analysis},
    VOLUME = {40},
      YEAR = {2004},
    NUMBER = {3-4},
     PAGES = {269--286}
}

@article {PR10,
    AUTHOR = {Ptashnyk, Mariya and Roose, Tiina},
     TITLE = {Derivation of a macroscopic model for transport of strongly
              sorbed solutes in the soil using homogenization theory},
   JOURNAL = {SIAM J. Appl. Math.},
    VOLUME = {70},
      YEAR = {2010},
    NUMBER = {7},
     PAGES = {2097--2118}
}
		
@article {Vis06,
    AUTHOR = {Visintin, Augusto},
     TITLE = {Towards a two-scale calculus},
   JOURNAL = {ESAIM Control Optim. Calc. Var.},
    VOLUME = {12},
      YEAR = {2006},
    NUMBER = {3},
     PAGES = {371--397 (electronic)}
}

@article {Vis08,
    AUTHOR = {Visintin, Augusto},
     TITLE = {Homogenization of the nonlinear {M}axwell model of
              viscoelasticity and of the {P}randtl-{R}euss model of
              elastoplasticity},
   JOURNAL = {Proc. Roy. Soc. Edinburgh Sect. A},
    VOLUME = {138},
      YEAR = {2008},
    NUMBER = {6},
     PAGES = {1363--1401}
}

@article {OV07,
    AUTHOR = {Onofrei, D. and Vernescu, B.},
     TITLE = {Error estimates for periodic homogenization with non-smooth
              coefficients},
   JOURNAL = {Asymptot. Anal.},
    VOLUME = {54},
      YEAR = {2007},
    NUMBER = {1-2},
     PAGES = {103--123}
}

@article {MS02,
    AUTHOR = {Matache, Ana-Maria and Schwab, Christoph},
     TITLE = {Two-scale {FEM} for homogenization problems},
   JOURNAL = {M2AN Math. Model. Numer. Anal.},
    VOLUME = {36},
      YEAR = {2002},
    NUMBER = {4},
     PAGES = {537--572}
}

@article {MN13,
    AUTHOR = {Muntean, A. and van Noorden, T. L.},
     TITLE = {Corrector estimates for the homogenization of a locally
              periodic medium with areas of low and high diffusivity},
   JOURNAL = {European J. Appl. Math.},
    VOLUME = {24},
      YEAR = {2013},
    NUMBER = {5},
     PAGES = {657--677}
}

@article {MN11,
    AUTHOR = {van Noorden, T. L. and Muntean, A.},
     TITLE = {Homogenisation of a locally periodic medium with areas of low
              and high diffusivity},
   JOURNAL = {European J. Appl. Math.},
    VOLUME = {22},
      YEAR = {2011},
    NUMBER = {5},
     PAGES = {493--516},
}

@book {JKO94,
    AUTHOR = {Jikov, V. V. and Kozlov, S. M. and Ole{\u\i}nik, O. A.},
     TITLE = {Homogenization of differential operators and integral
              functionals},
      NOTE = {Translated from the Russian by G. A. Yosifian},
 PUBLISHER = {Springer-Verlag, Berlin},
      YEAR = {1994},
     PAGES = {xii+570}
}

@book {MK06,
    AUTHOR = {Marchenko, Vladimir A. and Khruslov, Evgueni Ya.},
     TITLE = {Homogenization of partial differential equations},
    SERIES = {Progress in Mathematical Physics},
    VOLUME = {46},
      NOTE = {Translated from the 2005 Russian original by M. Goncharenko
              and D. Shepelsky},
 PUBLISHER = {Birkh\"auser Boston, Inc., Boston, MA},
      YEAR = {2006},
     PAGES = {xiv+398}
}
		
@book {Tar09,
    AUTHOR = {Tartar, Luc},
     TITLE = {The general theory of homogenization},
    SERIES = {Lecture Notes of the Unione Matematica Italiana},
    VOLUME = {7},
      NOTE = {A personalized introduction},
 PUBLISHER = {Springer-Verlag, Berlin; UMI, Bologna},
      YEAR = {2009},
     PAGES = {xxii+470}
}
		
@article {CM12,
    AUTHOR = {Chalupeck{\'y}, Vladim{\'{\i}}r and Muntean, Adrian},
     TITLE = {Semi-discrete finite difference multiscale scheme for a
              concrete corrosion model: a priori estimates and convergence},
   JOURNAL = {Jpn. J. Ind. Appl. Math.},
    VOLUME = {29},
      YEAR = {2012},
    NUMBER = {2},
     PAGES = {289--316}
}
		

@article {CFM10,
    AUTHOR = {Chalupeck{\'y}, Vladim{\'{\i}}r and Fatima, Tasnim and
              Muntean, Adrian},
     TITLE = {Multiscale sulfate attack on sewer pipes: numerical study of a
              fast micro-macro mass transfer limit},
   JOURNAL = {J. Math-for-Ind.},
    VOLUME = {2B},
      YEAR = {2010},
     PAGES = {171--181}
}
		
@article {EC96,
    AUTHOR = {Elliott, Charles M. and Garcke, Harald},
     TITLE = {On the {C}ahn-{H}illiard equation with degenerate mobility},
   JOURNAL = {SIAM J. Math. Anal.},
    VOLUME = {27},
      YEAR = {1996},
    NUMBER = {2},
     PAGES = {404--423}
}
	
@article {CE92,
    AUTHOR = {Copetti, M. I. M. and Elliott, C. M.},
     TITLE = {Numerical analysis of the {C}ahn-{H}illiard equation with a
              logarithmic free energy},
   JOURNAL = {Numer. Math.},
    VOLUME = {63},
      YEAR = {1992},
    NUMBER = {1},
     PAGES = {39--65}
}
		
@article {CH58,
    AUTHOR = {Cahn, J. W. and Hilliard, J. E.},
     TITLE = {Free energy of a non-uniform system {I}. {I}interfacial free energy.},
   JOURNAL = {J. Chem. Phys.},
    VOLUME = {28},
      YEAR = {1958},
     PAGES = {258--267}
}

@article {Ser11,
    AUTHOR = {Serfaty, Sylvia},
     TITLE = {Gamma-convergence of gradient flows on {H}ilbert and metric
              spaces and applications},
   JOURNAL = {Discrete Contin. Dyn. Syst.},
    VOLUME = {31},
      YEAR = {2011},
    NUMBER = {4},
     PAGES = {1427--1451}
}

@article {Le08,
    AUTHOR = {Le, Nam Q.},
     TITLE = {A gamma-convergence approach to the {C}ahn-{H}illiard
              equation},
   JOURNAL = {Calc. Var. Partial Differential Equations},
    VOLUME = {32},
      YEAR = {2008},
    NUMBER = {4},
     PAGES = {499--522}
}

@article{BBMN12,
    AUTHOR = {Bellettini, Giovanni and Bertini, Lorenzo and Mariani, Mauro
              and Novaga, Matteo},
     TITLE = {Convergence of the one-dimensional {C}ahn-{H}illiard equation},
   JOURNAL = {SIAM J. Math. Anal.},
    VOLUME = 44,
      YEAR = 2012,
    NUMBER = 5,
     PAGES = {3458--3480},
       DOI = {10.1137/120865410},
}

@article{Mie1915,
  author =        {Mielke, Alexander},
  journal =       {WIAS Preprint No.~1915},
  title =         {On evolutionary {$\Gamma$}-convergence for gradient
                   systems},
  year =          {2014},
	note =          {Accepted in Lecture Notes in Appl. Math. Mech.}
}

@article {NO10,
    AUTHOR = {Niethammer, Barbara and Oshita, Yoshihito},
     TITLE = {A rigorous derivation of mean-field models for diblock
              copolymer melts},
   JOURNAL = {Calc. Var. Partial Differential Equations},
    VOLUME = {39},
      YEAR = {2010},
    NUMBER = {3-4},
     PAGES = {273--305}
}

@Article{SS04,
  author = 	 {Sandier, Etienne and Serfaty, Sylvia},
  title = 	 {Gamma-convergence of gradient flows with applications 
                  to {G}inzburg-{L}andau },
  journal = 	 {Comm. Pure Appl. Math.},
  year = 	 {2004},
  volume =	 {LVII},
  pages =	 {1627--1672}
}

@article {SPPK13,
    AUTHOR = {Schmuck, Markus and Pradas, Marc and Pavliotis, Grigorios A.
              and Kalliadasis, Serafim},
     TITLE = {Derivation of effective macroscopic {S}tokes-{C}ahn-{H}illiard
              equations for periodic immiscible flows in porous media},
   JOURNAL = {Nonlinearity},
    VOLUME = {26},
      YEAR = {2013},
    NUMBER = {12},
     PAGES = {3259--3277}
}

@article {KLL09,
    AUTHOR = {Kim, Ji Hoon and Liu, Wing Kam and Lee, Christopher},
     TITLE = {Multi-scale solid oxide fuel cell materials modeling},
   JOURNAL = {Comput. Mech.},
    VOLUME = {44},
      YEAR = {2009},
     PAGES = {683--703}
}

@article {AKK11,
    AUTHOR = {Antonopoulou, Dimitra C. and Karali, Georgia D. and Kossioris,
              Georgios T.},
     TITLE = {Asymptotics for a generalized {C}ahn-{H}illiard equation with
              forcing terms},
   JOURNAL = {Discrete Contin. Dyn. Syst.},
    VOLUME = {30},
      YEAR = {2011},
    NUMBER = {4},
     PAGES = {1037--1054}
}

@article {CP11,
    AUTHOR = {Choksi, Rustum and Peletier, Mark A.},
     TITLE = {Small volume-fraction limit of the diblock copolymer problem:
              {II}. {D}iffuse-interface functional},
   JOURNAL = {SIAM J. Math. Anal.},
    VOLUME = {43},
      YEAR = {2011},
    NUMBER = {2},
     PAGES = {739--763}
}
		

@article {CP10,
    AUTHOR = {Choksi, Rustum and Peletier, Mark A.},
     TITLE = {Small volume fraction limit of the diblock copolymer problem:
              {I}. {S}harp-interface functional},
   JOURNAL = {SIAM J. Math. Anal.},
    VOLUME = {42},
      YEAR = {2010},
    NUMBER = {3},
     PAGES = {1334--1370}
}
		
@article {ABC94,
    AUTHOR = {Alikakos, Nicholas D. and Bates, Peter W. and Chen, Xinfu},
     TITLE = {Convergence of the {C}ahn-{H}illiard equation to the
              {H}ele-{S}haw model},
   JOURNAL = {Arch. Rational Mech. Anal.},
    VOLUME = {128},
      YEAR = {1994},
    NUMBER = {2},
     PAGES = {165--205}
}

@article {P89,
    AUTHOR = {Pego, R. L.},
     TITLE = {Front migration in the nonlinear {C}ahn-{H}illiard equation},
   JOURNAL = {Proc. Roy. Soc. London Ser. A},
    VOLUME = {422},
      YEAR = {1989},
    NUMBER = {1863},
     PAGES = {261--278}
}

@article {AW06,
    AUTHOR = {Abels, Helmut and Wilke, Mathias},
     TITLE = {Convergence to equilibrium for the {C}ahn-{H}illiard equation with a logarithmic free energy},
   JOURNAL = {MPI Leipzig Preprint no. 56},
    VOLUME = {},
      YEAR = {2006},
    NUMBER = {},
     PAGES = {}
}

@article {Krug03,
    AUTHOR = {Kruger, A. Ya.},
     TITLE = {On {F}r\'echet subdifferentials},
      NOTE = {Optimization and related topics, 3},
   JOURNAL = {J. Math. Sci. (N. Y.)},
    VOLUME = {116},
      YEAR = {2003},
    NUMBER = {3},
     PAGES = {3325--3358}
}

@book {Show97,
    AUTHOR = {Showalter, R. E.},
     TITLE = {Monotone operators in {B}anach space and nonlinear partial
              differential equations},
    SERIES = {Mathematical Surveys and Monographs},
    VOLUME = {49},
 PUBLISHER = {American Mathematical Society, Providence, RI},
      YEAR = {1997},
     PAGES = {xiv+278}
}

@article {MetaSpina08,
    AUTHOR = {Metafune, Giorgio and Spina, Chiara},
     TITLE = {An integration by parts formula in {S}obolev spaces},
   JOURNAL = {Mediterr. J. Math.},
    VOLUME = {5},
      YEAR = {2008},
    NUMBER = {3},
     PAGES = {357--369}
}

@Book{Wer05,
    Author = {Dirk {Werner}},
    Title = {{Funktionalanalysis.}},
    Edition = {5th, expanded ed.},
    ISBN = {3-540-21381-3/pbk},
    Pages = {xiii + 527},
    Year = {2005},
    Publisher = {Berlin: Springer},
    Language = {German}
}

@book {Zei90,
    AUTHOR = {Zeidler, Eberhard},
     TITLE = {Nonlinear functional analysis and its applications. {II}/{B}},
      NOTE = {Nonlinear monotone operators,
              Translated from the German by the author and Leo F. Boron},
 PUBLISHER = {Springer-Verlag, New York},
      YEAR = {1990},
     PAGES = {i--xvi and 469--1202}
}

@article {Gris05,
    AUTHOR = {Griso, Georges},
     TITLE = {Interior error estimate for periodic homogenization},
   JOURNAL = {C. R. Math. Acad. Sci. Paris},
    VOLUME = {340},
      YEAR = {2005},
    NUMBER = {3},
     PAGES = {251--254}
}

@article {FAZM11,
    AUTHOR = {Fatima, Tasnim and Arab, Nasrin and Zemskov, Evgeny P. and
              Muntean, Adrian},
     TITLE = {Homogenization of a reaction-diffusion system modeling sulfate
              corrosion of concrete in locally periodic perforated domains},
   JOURNAL = {J. Engrg. Math.},
    VOLUME = {69},
      YEAR = {2011},
    NUMBER = {2-3},
     PAGES = {261--276},
}

@article {Timo13,
    AUTHOR = {Timofte, Claudia},
     TITLE = {Multiscale analysis of diffusion processes in composite media},
   JOURNAL = {Comput. Math. Appl.},
    VOLUME = {66},
      YEAR = {2013},
    NUMBER = {9},
     PAGES = {1573--1580},
}

@article {Yang14,
    AUTHOR = {Yang, Zhanying},
     TITLE = {The periodic unfolding method for a class of parabolic
              problems with imperfect interfaces},
   JOURNAL = {ESAIM Math. Model. Numer. Anal.},
    VOLUME = {48},
      YEAR = {2014},
    NUMBER = {5},
     PAGES = {1279--1302},
}
		
@incollection {Ell89,
    AUTHOR = {Elliott, C. M.},
     TITLE = {The {C}ahn-{H}illiard model for the kinetics of phase
              separation},
 BOOKTITLE = {Mathematical models for phase change problems (\'{O}bidos,
              1988)},
    SERIES = {Internat. Ser. Numer. Math.},
    VOLUME = {88},
     PAGES = {35--73},
 PUBLISHER = {Birkh\"auser, Basel},
      YEAR = {1989},
}


@article {EZ1986,
    AUTHOR = {Elliott, Charles M. and Songmu, Zheng},
     TITLE = {On the {C}ahn-{H}illiard equation},
   JOURNAL = {Arch. Rational Mech. Anal.},
    VOLUME = {96},
      YEAR = {1986},
    NUMBER = {4},
     PAGES = {339--357},
}

@incollection {NC2008,
    AUTHOR = {Novick-Cohen, Amy},
     TITLE = {The {C}ahn-{H}illiard equation},
 BOOKTITLE = {Handbook of differential equations: evolutionary equations.
              {V}ol. {IV}},
    SERIES = {Handb. Differ. Equ.},
     PAGES = {201--228},
 PUBLISHER = {Elsevier/North-Holland, Amsterdam},
      YEAR = {2008},
}

@article {MNR2010,
    AUTHOR = {Muntean, Adrian and Neuss-Radu, Maria},
     TITLE = {A multiscale {G}alerkin approach for a class of nonlinear
              coupled reaction-diffusion systems in complex media},
   JOURNAL = {J. Math. Anal. Appl.},
    VOLUME = {371},
      YEAR = {2010},
    NUMBER = {2},
     PAGES = {705--718}
}

@article{Miel13TMER,
  author =        {Mielke, Alexander},
  journal =       {Discr. Cont. Dynam. Systems Ser.~S},
  number =        {2},
  pages =         {479-499},
  title =         {Thermomechanical modeling of
                   energy-reaction-diffusion systems, including
                   bulk-interface interactions},
  volume =        {6},
  year =          {2013},
  doi =           {doi:10.3934/dcdss.2013.6.479},
}

@article{Miel11GSRD,
  author =        {Mielke, Alexander},
  journal =       {Nonlinearity},
  pages =         {1329--1346},
  title =         {A gradient structure for reaction-diffusion systems
                   and for energy-drift-diffusion systems},
  volume =        {24},
  year =          {2011},
  doi =           {10.1088/0951-7715/24/4/016},
}

@article {Spag68,
    AUTHOR = {Spagnolo, S.},
     TITLE = {Sulla convergenza di soluzioni di equazioni paraboliche ed
              ellittiche. },
   JOURNAL = {Ann. Scuola Norm. Sup. Pisa (3) 22 (1968), 571-597; errata,
              ibid. (3) },
    VOLUME = {22},
      YEAR = {1968},
     PAGES = {673},
   MRCLASS = {35.42},
  MRNUMBER = {0240443 (39 \#1791)},
MRREVIEWER = {L. von Wolfersdorf},
}
		

@article {Spag67,
    AUTHOR = {Spagnolo, Sergio},
     TITLE = {Sul limite delle soluzioni di problemi di {C}auchy relativi
              all'equazione del calore},
   JOURNAL = {Ann. Scuola Norm. Sup. Pisa (3)},
    VOLUME = {21},
      YEAR = {1967},
     PAGES = {657--699},
   MRCLASS = {35.63},
  MRNUMBER = {0225015 (37 \#612)},
MRREVIEWER = {R. Guenther},
}
		
@article {HSW2005,
    AUTHOR = {Holmbom, Anders and Svanstedt, Nils and Wellander, Niklas},
     TITLE = {Multiscale convergence and reiterated homogenization of
              parabolic problems},
   JOURNAL = {Appl. Math.},
    VOLUME = {50},
      YEAR = {2005},
    NUMBER = {2},
     PAGES = {131--151}
}

@article {FHOS2007,
    AUTHOR = {Flod{\'e}n, Liselott and Holmbom, Anders and Olsson, Marianne
              and Svanstedt, Nils},
     TITLE = {Reiterated homogenization of monotone parabolic problems},
   JOURNAL = {Ann. Univ. Ferrara Sez. VII Sci. Mat.},
    VOLUME = {53},
      YEAR = {2007},
    NUMBER = {2},
     PAGES = {217--232}
}
		

@article {FO2007,
    AUTHOR = {Flod{\'e}n, Liselott and Olsson, Marianne},
     TITLE = {Homogenization of some parabolic operators with several time
              scales},
   JOURNAL = {Appl. Math.},
    VOLUME = {52},
      YEAR = {2007},
    NUMBER = {5},
     PAGES = {431--446}
}
		

@article {FO2006,
    AUTHOR = {Flod{\'e}n, Liselott and Olsson, Marianne},
     TITLE = {Reiterated homogenization of some linear and nonlinear
              monotone parabolic operators},
   JOURNAL = {Can. Appl. Math. Q.},
    VOLUME = {14},
      YEAR = {2006},
    NUMBER = {2},
     PAGES = {149--183}
}
		
@article {Perss2012,
    AUTHOR = {Persson, Jens},
     TITLE = {Homogenization of monotone parabolic problems with several
              temporal scales},
   JOURNAL = {Appl. Math.},
    VOLUME = {57},
      YEAR = {2012},
    NUMBER = {3},
     PAGES = {191--214}
}

@article {EKKP2010,
    AUTHOR = {Essel, Emmanuel Kwame and Kuliev, Komil and Kulieva, Gulchehra
              and Persson, Lars-Erik},
     TITLE = {Homogenization of quasilinear parabolic problems by the method
              of {R}othe and two scale convergence},
   JOURNAL = {Appl. Math.},
    VOLUME = {55},
      YEAR = {2010},
    NUMBER = {4},
     PAGES = {305--327}
}

@incollection {Hor1997,
    AUTHOR = {Hornung, Ulrich},
     TITLE = {Introduction},
 BOOKTITLE = {Homogenization and porous media},
    SERIES = {Interdiscip. Appl. Math.},
    VOLUME = {6},
     PAGES = {1--25, 259--275},
 PUBLISHER = {Springer, New York},
      YEAR = {1997}
}
		
@book {Pan1997,
    AUTHOR = {Pankov, Alexander},
     TITLE = {{$G$}-convergence and homogenization of nonlinear partial
              differential operators},
    SERIES = {Mathematics and its Applications},
    VOLUME = {422},
 PUBLISHER = {Kluwer Academic Publishers, Dordrecht},
      YEAR = {1997},
     PAGES = {xiv+249}
}

@book {ET1974,
    AUTHOR = {Ekeland, Ivar and Temam, Roger},
     TITLE = {Analyse convexe et probl\`emes variationnels},
      NOTE = {Collection {\'E}tudes Math{\'e}matiques},
 PUBLISHER = {Dunod; Gauthier-Villars, Paris-Brussels-Montreal, Que.},
      YEAR = {1974},
     PAGES = {x+340}
}		

@article {Mur1978,
    AUTHOR = {Murat, Fran{\c{c}}ois},
     TITLE = {Compacit\'e par compensation},
   JOURNAL = {Ann. Scuola Norm. Sup. Pisa Cl. Sci. (4)},
    VOLUME = {5},
      YEAR = {1978},
    NUMBER = {3},
     PAGES = {489--507}
}

@incollection {Tar1979,
    AUTHOR = {Tartar, L.},
     TITLE = {Compensated compactness and applications to partial
              differential equations},
 BOOKTITLE = {Nonlinear analysis and mechanics: {H}eriot-{W}att {S}ymposium,
              {V}ol. {IV}},
    SERIES = {Res. Notes in Math.},
    VOLUME = {39},
     PAGES = {136--212},
 PUBLISHER = {Pitman, Boston, Mass.-London},
      YEAR = {1979},
}

@article {NM11,
    AUTHOR = {Muntean, A. and van Noorden, T. L. },
     TITLE = {Homogenisation of a locally periodic medium with areas of low
              and high diffusivity},
   JOURNAL = {European J. Appl. Math.},
    VOLUME = {22},
      YEAR = {2011},
    NUMBER = {5},
     PAGES = {493--516}
}
		
@article {ADH1990,
    AUTHOR = {Arbogast, Todd and Douglas, Jr., Jim and Hornung, Ulrich},
     TITLE = {Derivation of the double porosity model of single phase flow
              via homogenization theory},
   JOURNAL = {SIAM J. Math. Anal.},
    VOLUME = {21},
      YEAR = {1990},
    NUMBER = {4},
     PAGES = {823--836}
}

@article {KWW2006,
    AUTHOR = {Kolokolnikov, T. and Ward, M. J. and Wei, J.},
     TITLE = {Zigzag and breakup instabilities of stripes and rings in the
              two-dimensional {G}ray-{S}cott model},
   JOURNAL = {Stud. Appl. Math.},
    VOLUME = {116},
      YEAR = {2006},
    NUMBER = {1},
     PAGES = {35--95}
}
		

@book {Kol2004,
    AUTHOR = {Kolokolnikov, Theodore},
     TITLE = {Pattern formation in reaction-diffusion models far from the
              {T}uring regime},
      NOTE = {Thesis (Ph.D.)--The University of British Columbia (Canada)},
 PUBLISHER = {ProQuest LLC, Ann Arbor, MI},
      YEAR = {2004},
     PAGES = {184},
   MRCLASS = {Thesis},
}
		
@article{KBHS2009,
  title={Control of spatiotemporal patterns in the Gray--Scott model},
  author={Kyrychko, YN and Blyuss, KB and Hogan, SJ and Sch{\"o}ll, E},
  journal={Chaos: An Interdisciplinary Journal of Nonlinear Science},
  volume={19},
  number={4},
  pages={043126},
  year={2009},
  publisher={AIP Publishing}
}

@article {Tur1952,
	author=		{A. M. Turing},
	journal=	{Philosophical Transactions of the Royal Society (B)},
	pages=		{37--72},
	title=		{The Chemical Basis of Morphogenesis},
	volume=		{237},
	year=		{1952},
}

@article {BK1983,
	author=		{D. Bedeaux and R. Kapral},
	journal=	{J. Chem. Phys.},
	pages=		{1783},
	title=		{The effective reaction rate and diffusion coefficients for a two-phase medium},
	volume=		{79},
	year=		{1983},
}

@article {Xin2000,
	author=		{J. Xin},
	journal=	{SIAM Rev.},
	pages=		{161},
	title=		{Front propagation in heterogeneous media},
	volume=		{42},
	year=		{2000},
}

@article {Kee2000,
	author=		{J. P. Keener},
	journal=	{Physica D},
	pages=		{1},
	title=		{Homogenization and propagation in the bistable equation},
	volume=		{136},
	year=		{2000},
}

@article {ABK2009,
	author=		{Sergio Alonso and Markus B\"ar and Raymond Kapral},
	journal=	{J. Chem. Phys.},
	pages=		{214102},
	title=		{Effective medium approach for heterogeneous reaction-diffusion media},
	volume=		{131},
	year=		{2009},
}
@article {ABK2009-2,
	author=		{Sergio Alonso and Markus B\"ar and Raymond Kapral},
	journal=	{Phys. Rev. Lett.},
	pages=		{238302},
	title=		{Effective medium theory for reaction rates and diffusion coefficients in heterogeneous systems},
	volume=		{102},
	year=		{2009},
}

@article{KKT2009,
   author  = {Michael Kaliske and Stefan Kolling and Mario Timmel},
   title   = {Modelling of microstructural void evolution with configurational forces},
   journal = {ZAMM. Zeitschrift f\"ur Angewandte Mathematik und Mechanik},
   volume  = {89},
   number  = {8},
   year    = {2009},
   pages   = {698--708},
}

@article{DFK2010,
    author  = {Cristian Dascalu and Bertrand Fran\c{c}ois and Oumar Keita},
    title   = {A two-scale model for subcritical damage propagation},
    journal = {International Journal of Solids and Structures},
    volume  = {47},
    number  = {3--4},
    pages   = {493--502},
    year    = {2010},
}

@Article{FOBS2014,
    Author = {Sebastian {Fillep} and Julia {Orlik} and Zoufine {Bare} and Paul {Steinmann}},
    Title = {{Homogenization in periodically heterogeneous elastic bodies with multiple micro-contact.}},
    Journal = {{Math. Mech. Solids}},
    Volume = {19},
    Number = {8},
    Pages = {1011--1021},
    Year = {2014}
}

@article {GP2014,
    AUTHOR = {Graf, Isabell and Peter, Malte A.},
     TITLE = {Homogenization of a carcinogenesis model with different
              scalings with the homogenization parameter},
   JOURNAL = {Math. Bohem.},
    VOLUME = {139},
      YEAR = {2014},
    NUMBER = {2},
     PAGES = {163--184}
}

@article {GPS2014,
    AUTHOR = {Graf, Isabell and Peter, Malte A. and Sneyd, James},
     TITLE = {Homogenization of a nonlinear multiscale model of calcium
              dynamics in biological cells},
   JOURNAL = {J. Math. Anal. Appl.},
    VOLUME = {419},
      YEAR = {2014},
    NUMBER = {1},
     PAGES = {28--47}
}

@article {EE2003,
    AUTHOR = {Weinan E and Bjorn Engquist},
     TITLE = {The heterogeneous multiscale methods},
   JOURNAL = {Comm. Math. Sci.},
    VOLUME = {1},
      YEAR = {2003},
    NUMBER = {1},
     PAGES = {87--132}
}

@article {EELRV2007,
    AUTHOR = {E, Weinan and Engquist, Bjorn and Li, Xiantao and Ren, Weiqing
              and Vanden-Eijnden, Eric},
     TITLE = {Heterogeneous multiscale methods: a review},
   JOURNAL = {Commun. Comput. Phys.},
  FJOURNAL = {Communications in Computational Physics},
    VOLUME = {2},
      YEAR = {2007},
    NUMBER = {3},
     PAGES = {367--450}
}

@article {NR2001,
    AUTHOR = {Nandakumaran, A. K. and Rajesh, M.},
     TITLE = {Homogenization of a nonlinear degenerate parabolic
              differential equation},
   JOURNAL = {Electron. J. Differential Equations},
      YEAR = {2001},
     PAGES = {No. 17, 19 pp. (electronic)}
}
	
@article {CP1999,
    AUTHOR = {Clark, G. W. and Packer, L. A.},
     TITLE = {Two-scale homogenization of non-linear degenerate evolution
              equations},
   JOURNAL = {J. Math. Anal. Appl.},
    VOLUME = {238},
      YEAR = {1999},
    NUMBER = {1},
     PAGES = {316--328},
}	

@book {Tar2009,
    AUTHOR = {Tartar, Luc},
     TITLE = {The general theory of homogenization},
    SERIES = {Lecture Notes of the Unione Matematica Italiana},
    VOLUME = {7},
      NOTE = {A personalized introduction},
 PUBLISHER = {Springer-Verlag, Berlin; UMI, Bologna},
      YEAR = {2009},
     PAGES = {xxii+470}
}

@article {Pet2009,
    AUTHOR = {Peter, Malte A.},
     TITLE = {Coupled reaction-diffusion processes inducing an evolution of
              the microstructure: analysis and homogenization},
   JOURNAL = {Nonlinear Anal.},
    VOLUME = {70},
      YEAR = {2009},
    NUMBER = {2},
     PAGES = {806--821}
}

@book {Smol1994,
    AUTHOR = {Smoller, Joel},
     TITLE = {Shock waves and reaction-diffusion equations},
    SERIES = {Grundlehren der Mathematischen Wissenschaften},
    VOLUME = {258},
   EDITION = {2nd},
 PUBLISHER = {Springer-Verlag, New York},
      YEAR = {1994},
     PAGES = {xxiv+632}
}

@book {Grisvard1985,
    AUTHOR = {Grisvard, P.},
     TITLE = {Elliptic problems in nonsmooth domains},
    SERIES = {Monographs and Studies in Mathematics},
    VOLUME = {24},
 PUBLISHER = {Pitman (Advanced Publishing Program), Boston, MA},
      YEAR = {1985},
     PAGES = {xiv+410},
}

@book {Braess2007,
    AUTHOR = {Braess, D.},
     TITLE = {Finite {E}lemnte: {T}heorie, schnelle {L}\"oser und {A}nwendungen in der {E}lastizit\"atstheorie},
   EDITION = {4th},
 PUBLISHER = {Springer-Verlag Berlin Heidelberg},
      YEAR = {2007},
     PAGES = {xviii+357},
}

@book {Necas1967,
    AUTHOR = {Ne{\v{c}}as, Jind{\v{r}}ich},
     TITLE = {Les m\'ethodes directes en th\'eorie des \'equations
              elliptiques},
 PUBLISHER = {Masson et Cie, \'Editeurs, Paris; Academia, \'Editeurs,
              Prague},
      YEAR = {1967},
     PAGES = {351}
}

@misc{CDN2010,
  TITLE = {{Corner Singularities and Analytic Regularity for Linear Elliptic Systems. Part I: Smooth domains.}},
  AUTHOR = {Costabel, Martin and Dauge, Monique and Nicaise, Serge},
  howpublished = {https://hal.archives-ouvertes.fr/hal-00453934},
  YEAR = {2010},
}

@article {GNO2015,
    AUTHOR = {Gloria, A. and Neukamm, S. and Otto, F.},
     TITLE = {Quantification of ergodicity in stochastic homogenization:
              optimal bounds via spectral gap on {G}lauber dynamics},
   JOURNAL = {Invent. Math.},
    VOLUME = {199},
      YEAR = {2015},
    NUMBER = {2},
     PAGES = {455--515},
}

@book{JKO1994,
    AUTHOR = {Jikov, V. V. and Kozlov, S. M. and Ole{\u\i}nik, O. A.},
     TITLE = {Homogenization of differential operators and integral
              functionals},
      NOTE = {Translated from the Russian by G. A. Yosifian},
 PUBLISHER = {Springer-Verlag},
   ADDRESS = {Berlin},
      YEAR = {1994},
  optPAGES = {xii+570},
}

@book{MK2006,
    AUTHOR = {Marchenko, Vladimir A. and Khruslov, Evgueni Ya.},
     TITLE = {Homogenization of partial differential equations},
      NOTE = {Translated from the 2005 Russian original by M. Goncharenko
              and D. Shepelsky},
 PUBLISHER = {Birkh\"auser Boston Inc.},
   ADDRESS = {Boston, MA},
      YEAR = {2006}
}

@book {Roub2005,
    AUTHOR = {Roub{\'{\i}}{\v{c}}ek, Tom{\'a}{\v{s}}},
     TITLE = {Nonlinear partial differential equations with applications},
    SERIES = {International Series of Numerical Mathematics},
    VOLUME = {153},
 PUBLISHER = {Birkh\"auser Verlag, Basel},
      YEAR = {2005},
     PAGES = {xviii+405}
}

@book {E2011,
    AUTHOR = {E, Weinan},
     TITLE = {Principles of multiscale modeling},
 PUBLISHER = {Cambridge University Press, Cambridge},
      YEAR = {2011},
     PAGES = {xviii+466}
}

@book {EH2009,
    AUTHOR = {Efendiev, Yalchin and Hou, Thomas Y.},
     TITLE = {Multiscale finite element methods},
    SERIES = {Surveys and Tutorials in the Applied Mathematical Sciences},
    VOLUME = {4},
      NOTE = {Theory and applications},
 PUBLISHER = {Springer, New York},
      YEAR = {2009},
     PAGES = {xii+234}
}

@article {CM2012,
    AUTHOR = {Chalupeck{\'y}, Vladim{\'{\i}}r and Muntean, Adrian},
     TITLE = {Semi-discrete finite difference multiscale scheme for a
              concrete corrosion model: a priori estimates and convergence},
   JOURNAL = {Jpn. J. Ind. Appl. Math.},
    VOLUME = {29},
      YEAR = {2012},
    NUMBER = {2},
     PAGES = {289--316}
}

@article {CFM2010,
    AUTHOR = {Chalupeck{\'y}, Vladim{\'{\i}}r and Fatima, Tasnim and
              Muntean, Adrian},
     TITLE = {Multiscale sulfate attack on sewer pipes: numerical study of a
              fast micro-macro mass transfer limit},
   JOURNAL = {J. Math-for-Ind.},
    VOLUME = {2B},
      YEAR = {2010},
     PAGES = {171--181}
}
		
@article{Baer2002,
  author =        {Brusch, Lutz and K\"uhne, Heiko and Thiele, Uwe and B\"ar, Markus },
  journal =       {Phys. Rev. E},
  number =        {},
  pages =         {011602},
  title =         {Dewetting of thin films on heterogeneous substrates: {P}inning versus coarsening},
  volume =        {66},
  year =          {2002},
}

@article{Baer2003,
  author =        {Thiele, U. and Brusch, L. and Bestehorn, M. and B\"ar, M. },
  journal =       {Eur. Phys. J. E},
  number =        {},
  pages =         {255-271},
  title =         {Modelling thin-film dewetting on structured substrates and templates: {B}ifurcation analysis and numerical simulation},
  volume =        {11},
  year =          {2003},
}

@book{AmGiSa05GFMS,
  address =       {Basel},
  author =        {Ambrosio, Luigi and Gigli, Nicola and
                   Savar{\'e}, Giuseppe},
  pages =         {viii+333},
  publisher =     {Birkh\"auser Verlag},
  series =        {Lectures in Mathematics ETH Z\"urich},
  title =         {Gradient flows in metric spaces and in the space of
                   probability measures},
  year =          {2005},
}

@misc{DanSav10LNGF,
  author =        {Daneri, Sara and Savar\'e, Giuseppe},
  howpublished =  {arXiv:1009.3737v1},
  title =         {Lecture notes on gradient flows and optimal
                   transport},
  year =          {2010},
}

@article{DanSav08ECDC,
  author =        {Daneri, Sara and Savar\'e, Giuseppe},
  journal =       {SIAM J. Math. Analysis},
  pages =         {1104--1122},
  title =         {Eulerian calculus for the displacement convexity in
                   the {W}asserstein distance},
  volume =        {40},
  year =          {2008},
}

@Book{GR2005,
    Author = {Christian {Gro\ss mann} and Hans-G\"org {Roos}},
    Title = {{Numerische Behandlung partieller Differentialgleichungen.}},
    Edition = {3rd revised and expanded ed.},
    Pages = {571},
    Year = {2005},
    Publisher = {Wiesbaden: Teubner},
    Language = {German},
}

@article {BM2011,
    AUTHOR = {Bartels, S{\"o}ren and M{\"u}ller, R{\"u}diger},
     TITLE = {Error control for the approximation of {A}llen-{C}ahn and
              {C}ahn-{H}illiard equations with a logarithmic potential},
   JOURNAL = {Numer. Math.},
    VOLUME = {119},
      YEAR = {2011},
    NUMBER = {3},
     PAGES = {409--435}
}

@article {Lie13,
    AUTHOR = {Liero, Matthias},
     TITLE = {Passing from bulk to bulk-surface evolution in the
              {A}llen-{C}ahn equation},
   JOURNAL = {NoDEA Nonlinear Differential Equations Appl.},
    VOLUME = {20},
      YEAR = {2013},
    NUMBER = {3},
     PAGES = {919--942}
}

@article {NO2001,
    AUTHOR = {Niethammer, Barbara and Otto, Felix},
     TITLE = {Ostwald ripening: the screening length revisited},
   JOURNAL = {Calc. Var. Partial Differential Equations},
    VOLUME = {13},
      YEAR = {2001},
    NUMBER = {1},
     PAGES = {33--68}
}

@article{RSS2008,
  author =        {Rossi, Riccarda and Segatti, Antonio and
                   Stefanelli, Ulisse},
  journal =       {Arch. Ration. Mech. Anal.},
  number =        {1},
  pages =         {91--135},
  title =         {Attractors for gradient flows of nonconvex
                   functionals and applications},
  volume =        {187},
  year =          {2008},
}

@book{Lopez13,
  author = {L{\'o}pez-G{\'o}mez, Juli{\'a}n},     
  title = {Linear second order elliptic operators}, 
  publisher = {World Scientific Publishing Co. Pte. Ltd., Hackensack, NJ},      
  year = {2013},     
  pages = {xviii+337},      
}
	
@article {Mie08Ham,
    AUTHOR = {Mielke, Alexander},
     TITLE = {Weak-convergence methods for {H}amiltonian multiscale
              problems},
   JOURNAL = {Discrete Contin. Dyn. Syst.},
  FJOURNAL = {Discrete and Continuous Dynamical Systems. Series A},
    VOLUME = {20},
      YEAR = {2008},
    NUMBER = {1},
     PAGES = {53--79},
       DOI = {10.3934/dcds.2008.20.53},
       URL = {http://dx.doi.org/10.3934/dcds.2008.20.53},
}

@unpublished{Alla93TSCH,
  author =        {Allaire, Gregoire},
  note =          {School on Homogenization, ICTP Trieste},
  title =         {Two-scale convergence and homogenization of periodic
                   structures},
  year =          {1993},
}


@article{RosSav06GFNC,
  author =        {Rossi, R. and Savar\'e, G.},
  journal =       {ESAIM Control Optim. Calc. Var.},
  pages =         {564--614},
  title =         {Gradient flows of non convex functionals in {H}ilbert
                   spaces and applications},
  volume =        {12},
  year =          {2006},
}

@book{Atto84VCFO,
    AUTHOR = {Attouch, H.},
     TITLE = {Variational convergence for functions and operators},
    SERIES = {Applicable Mathematics Series},
 PUBLISHER = {Pitman (Advanced Publishing Program), Boston, MA},
      YEAR = {1984},
     PAGES = {xiv+423},
}

@book{Brez73,
  address =       {Amsterdam},
  author =        {Br{\'e}zis, H.},
  pages =         {vi+183},
  publisher =     {North-Holland Publishing Co.},
  title =         {Op{\'e}rateurs maximaux monotones et semi-groupes de
                   contractions dans les espaces de {H}ilbert},
  year =          {1973},
}

@article{Beni72SIEE,
  author =        {B{\'e}nilan, Philippe},
  journal =       {C. R. Acad. Sci. Paris S\'er. A-B},
  pages =         {A47--A50},
  title =         {Solutions int\'egrales d'\'equations d'\'evolution
                   dans un espace de {B}anach},
  volume =        {274},
  year =          {1972},
}

@article{Miel14?DAEE,
  author =        {Mielke, Alexander},
  journal =       {Discr. Cont. Dynam. Systems Ser.~A},
  title =         {Deriving amplitude equations via evolutionary
                   {$\Gamma$}-convergence},
  volume =        {35},
  number =        {6},
  pages =         {2679--2700}, 
  year =          {2015},
}

@article{Ioff77LSIF,
  author =        {Ioffe, A. D.},
  journal =       {SIAM J. Control Optimization},
  number =        {4},
  pages =         {521--538},
  title =         {On lower semicontinuity of integral functionals. {I}},
  volume =        {15},
  year =          {1977},
}

@article{AW07,
	AUTHOR = {Abels, Helmut and Wilke, Mathias},
	TITLE = {Convergence to equilibrium for the {C}ahn-{H}illiard equation
	          with a logarithmic free energy},
	JOURNAL = {Nonlinear Anal.},
	VOLUME = {67},
	YEAR = {2007},
	NUMBER = {11},
	PAGES = {3176--3193},
}

@article{GrMiRo11ACHE,
  author =        {Grasselli, Maurizio and Miranville, Alain and
                   Rossi, Riccarda and Schimperna, Giulio},
  journal =       {Comm. Partial Differential Equations},
  number =        {7},
  pages =         {1193--1238},
  title =         {Analysis of the {C}ahn-{H}illiard equation with a
                   chemical potential dependent mobility},
  volume =        {36},
  year =          {2011},
}

@article{Heida15,
  author =        {Heida, Martin},
  journal =       {J. Math. Anal. Appl.},
  number =        {1},
  pages =         {410--455},
  title =         {On systems of {C}ahn-{H}illiard and {A}llen-{C}ahn
                   equations considered as gradient flows in {H}ilbert
                   spaces},
  volume =        {423},
  year =          {2015},
}

@article {MRS08,
    AUTHOR = {Mielke, Alexander and Roub{\'{\i}}{\v{c}}ek, Tom{\'a}{\v{s}}
              and Stefanelli, Ulisse},
     TITLE = {{$\Gamma$}-limits and relaxations for rate-independent
              evolutionary problems},
   JOURNAL = {Calc. Var. Partial Differential Equations},
  FJOURNAL = {Calculus of Variations and Partial Differential Equations},
    VOLUME = {31},
      YEAR = {2008},
    NUMBER = {3},
     PAGES = {387--416},
}

@article{Rei2008,
  author =        {Reichelt, Sina},
  journal =       {WIAS Preprint No.~2008},
  title =         {Error estimates for nonlinear reaction-diffusion systems involving different diffusion length scales},
  year =          {2014},
	note =          {Accepted in J.~Phys.:~Conf.~Ser.},
}

@article{LR2114,
  author =        {Liero, Matthias and Reichelt, Sina},
  journal =       {WIAS Preprint No.~2114},
  title =         {Homogenization of {C}ahn--{H}illiard-type equations via Evolutionary {$\Gamma$}-convergence},
  year =          {2015},
}

@article {FKR2013,
    AUTHOR = {Fiaschi, Alice and Knees, Dorothee and Reichelt, Sina},
     TITLE = {Global higher integrability of minimizers of variational
              problems with mixed boundary conditions},
   JOURNAL = {J. Math. Anal. Appl.},
    VOLUME = {401},
      YEAR = {2013},
    NUMBER = {1},
     PAGES = {269--288},
}

@Book{Emm2004,
    Author = {Etienne {Emmrich}},
    Title = {{Gew\"ohnliche und Operator-Differential\-glei\-chun\-gen. Eine integrierte Einf\"uhrung in Randwertprobleme und Evolutionsgleichungen f\"ur Studierende.}},
    ISBN = {3-528-03213-8/pbk},
    Pages = {x + 300},
    Year = {2004},
    Publisher = {Wiesbaden: Vieweg},
    Language = {German},
}

@book {GGZ1974,
    AUTHOR = {Gajewski, Herbert and Gr{\"o}ger, Konrad and Zacharias, Klaus},
     TITLE = {Nichtlineare {O}peratorgleichungen und
              {O}peratordifferentialgleichungen},
      NOTE = {Mathematische Lehrb{\"u}cher und Monographien, II. Abteilung,
              Mathematische Monographien, Band 38},
 PUBLISHER = {Akademie-Verlag, Berlin},
      YEAR = {1974},
     PAGES = {ix+281 pp. (loose errata)},
}

@article {HK2012,
    AUTHOR = {Hanke, Hauke and Knees, Dorothee},
     TITLE = {Derivation of an effective damage model with evolving micro-structure},
   JOURNAL = {WIAS Preprint No.~1749},
      YEAR = {2012},
}


@article {HK2015,
    AUTHOR = {Hanke, Hauke and Knees, Dorothee},
     TITLE = {Homogenization of elliptic systems with non-periodic, state dependent coefficients},
   JOURNAL = {Asymptot. Anal.},
  	VOLUME = {92},
      YEAR = {2015},
		 PAGES = {203--234},
}

@article {PB2009,
    AUTHOR = {Peter, Malte A. and B{\"o}hm, M.},
     TITLE = {Multiscale modelling of chemical degradation mechanisms in
              porous media with evolving microstructure},
   JOURNAL = {Multiscale Model. Simul.},
    VOLUME = {7},
      YEAR = {2009},
    NUMBER = {4},
     PAGES = {1643--1668},
}

@article {OV2007,
    AUTHOR = {Onofrei, D. and Vernescu, B.},
     TITLE = {Error estimates for periodic homogenization with non-smooth
              coefficients},
   JOURNAL = {Asymptot. Anal.},
    VOLUME = {54},
      YEAR = {2007},
    NUMBER = {1-2},
     PAGES = {103--123},
}	

@article {Gri2014,
    AUTHOR = {Griso, G.},
     TITLE = {Error estimates in periodic homogenization with a
              non-homogeneous {D}irichlet condition},
   JOURNAL = {Asymptot. Anal.},
    VOLUME = {87},
      YEAR = {2014},
    NUMBER = {1-2},
     PAGES = {91--121}
}

@book {Grisvard,
    AUTHOR = {Grisvard, P.},
     TITLE = {Elliptic Problems in Nnonsmooth Domains},
    SERIES = {Monographs and Studies in Mathematics},
    VOLUME = {24},
 PUBLISHER = {Pitman (Advanced Publishing Program), Boston, MA},
      YEAR = {1985},
     PAGES = {xiv+410},
}

@article {EFF1982,
    AUTHOR = {Evans, John W. and Fenichel, Neil and Feroe, John A.},
     TITLE = {Double impulse solutions in nerve axon equations},
   JOURNAL = {SIAM J. Appl. Math.},
    VOLUME = {42},
      YEAR = {1982},
    NUMBER = {2},
     PAGES = {219--234},
}
		
@article {McK1970,
    AUTHOR = {McKean, Jr., H. P.},
     TITLE = {Nagumo's equation},
   JOURNAL = {Advances in Math.},
    VOLUME = {4},
      YEAR = {1970},
     PAGES = {209--223 (1970)},
}

@article {Hast1976,
    AUTHOR = {Hastings, S. P.},
     TITLE = {On the existence of homoclinic and periodic orbits for the
              {F}itzhugh-{N}agumo equations},
   JOURNAL = {Quart. J. Math. Oxford Ser. (2)},
    VOLUME = {27},
      YEAR = {1976},
    NUMBER = {105},
     PAGES = {123--134},
}	

@article {Hast1982,
    AUTHOR = {Hastings, Stuart P.},
     TITLE = {Single and multiple pulse waves for the {F}itz{H}ugh-{N}agumo
              equations},
   JOURNAL = {SIAM J. Appl. Math.},
    VOLUME = {42},
      YEAR = {1982},
    NUMBER = {2},
     PAGES = {247--260},
}	

@article {Carp1977,
    AUTHOR = {Carpenter, Gail A.},
     TITLE = {A geometric approach to singular perturbation problems with
              applications to nerve impulse equations},
   JOURNAL = {J. Differential Equations},
    VOLUME = {23},
      YEAR = {1977},
    NUMBER = {3},
     PAGES = {335--367},
}

@article {Szmo1991,
    AUTHOR = {Szmolyan, P.},
     TITLE = {Transversal heteroclinic and homoclinic orbits in singular
              perturbation problems},
   JOURNAL = {J. Differential Equations},
    VOLUME = {92},
      YEAR = {1991},
    NUMBER = {2},
     PAGES = {252--281},
}		

@article {NAY1962,
    AUTHOR = {Nagumo, J. and Arimoto, S. and Yoshizawa, S.},
     TITLE = {An active pulse transmission line simulating nerve axon},
   JOURNAL = {Proceedings of the IRE},
    VOLUME = {50},
      YEAR = {1962},
     PAGES = {2061--2070},
}



@article {AK2015,
    AUTHOR = {Arioli, Gianni and Koch, Hans},
     TITLE = {Existence and stability of traveling pulse solutions of the
              {F}itz{H}ugh-{N}agumo equation},
   JOURNAL = {Nonlinear Anal.},
    VOLUME = {113},
      YEAR = {2015},
     PAGES = {51--70},
}
		
@article {Jon1984,
    AUTHOR = {Jones, Christopher K. R. T.},
     TITLE = {Stability of the travelling wave solution of the
              {F}itz{H}ugh-{N}agumo system},
   JOURNAL = {Trans. Amer. Math. Soc.},
    VOLUME = {286},
      YEAR = {1984},
    NUMBER = {2},
     PAGES = {431--469},
}
	

@article {Yana1985,
    AUTHOR = {Yanagida, Eiji},
     TITLE = {Stability of fast travelling pulse solutions of the
              {F}itz{H}ugh-{N}agumo equations},
   JOURNAL = {J. Math. Biol.},
    VOLUME = {22},
      YEAR = {1985},
    NUMBER = {1},
     PAGES = {81--104},
}
			
@article {Xin1991,
    AUTHOR = {Xin, Xue},
     TITLE = {Existence and stability of traveling waves in periodic media
              governed by a bistable nonlinearity},
   JOURNAL = {J. Dynam. Differential Equations},
    VOLUME = {3},
      YEAR = {1991},
    NUMBER = {4},
     PAGES = {541--573},
}

@article {Hei2001,
    AUTHOR = {Heinze, S.},
     TITLE = {Wave solutions to reaction-diffusion systems in perforated
              domains},
   JOURNAL = {Z. Anal. Anwendungen},
    VOLUME = {20},
      YEAR = {2001},
    NUMBER = {3},
     PAGES = {661--676},
}

@incollection {HuZi1995,
    AUTHOR = {Hudson, W. and Zinner, B.},
     TITLE = {Existence of traveling waves for reaction diffusion equations
              of {F}isher type in periodic media},
 BOOKTITLE = {Boundary value problems for functional-differential equations},
     PAGES = {187--199},
 PUBLISHER = {World Sci. Publ., River Edge, NJ},
      YEAR = {1995},
}

@article {MSU2007,
    AUTHOR = {Matthies, Karsten and Schneider, Guido and Uecker, Hannes},
     TITLE = {Exponential averaging for traveling wave solutions in rapidly
              varying periodic media},
   JOURNAL = {Math. Nachr.},
    VOLUME = {280},
      YEAR = {2007},
    NUMBER = {4},
     PAGES = {408--422},
}
	
@article {CGW2008,
    AUTHOR = {Chen, Xinfu and Guo, Jong-Shenq and Wu, Chin-Chin},
     TITLE = {Traveling waves in discrete periodic media for bistable
              dynamics},
   JOURNAL = {Arch. Ration. Mech. Anal.},
    VOLUME = {189},
      YEAR = {2008},
    NUMBER = {2},
     PAGES = {189--236},
}

@article {GuHa2006,
    AUTHOR = {Guo, Jong-Shenq and Hamel, Fran\c{c}ois},
     TITLE = {Front propagation for discrete periodic monostable equations},
   JOURNAL = {Math. Ann.},
    VOLUME = {335},
      YEAR = {2006},
    NUMBER = {3},
     PAGES = {489--525},
}

@article {BoMa2014,
    AUTHOR = {Boden, Adam and Matthies, Karsten},
     TITLE = {Existence and homogenisation of travelling waves bifurcating
              from resonances of reaction-diffusion equations in periodic
              media},
   JOURNAL = {J. Dynam. Differential Equations},
    VOLUME = {26},
      YEAR = {2014},
    NUMBER = {3},
     PAGES = {405--459},
}
	
@article {BeHa2002,
    AUTHOR = {Berestycki, Henri and Hamel, Fran\c{c}ois},
     TITLE = {Front propagation in periodic excitable media},
   JOURNAL = {Comm. Pure Appl. Math.},
    VOLUME = {55},
      YEAR = {2002},
    NUMBER = {8},
     PAGES = {949--1032},
}


@incollection {JKP1991,
    AUTHOR = {Jones, C. and Kopell, N. and Langer, R.},
     TITLE = {Construction of the {F}itz{H}ugh-{N}agumo pulse using
              differential forms},
 BOOKTITLE = {Patterns and dynamics in reactive media ({M}inneapolis, {MN},
              1989)},
    SERIES = {IMA Vol. Math. Appl.},
    VOLUME = {37},
     PAGES = {101--115},
 PUBLISHER = {Springer, New York},
      YEAR = {1991},
}
			
@article {Deng1991,
    AUTHOR = {Deng, Bo},
     TITLE = {The existence of infinitely many traveling front and back
              waves in the {F}itz{H}ugh-{N}agumo equations},
   JOURNAL = {SIAM J. Math. Anal.},
    VOLUME = {22},
      YEAR = {1991},
    NUMBER = {6},
     PAGES = {1631--1650},
}

@article {EvaI1972,
    AUTHOR = {Evans, John W.},
     TITLE = {Nerve axon equations. {I}. {L}inear approximations},
   JOURNAL = {Indiana Univ. Math. J.},
    VOLUME = {21},
      YEAR = {1971/72},
     PAGES = {877--885},
}	

\begin{thebibliography}{AA00}\itemsep0.1em

\bibitem[ArK15]{AK2015}
{\scshape G.~Arioli {\upshape and} H.~Koch}.
\newblock Existence and stability of traveling pulse solutions of the
  {F}itz{H}ugh-{N}agumo equation.
\newblock {\em Nonlinear Anal.}, 113, 51--70, 2015.

\bibitem[BeH02]{BeHa2002}
{\scshape H.~Berestycki {\upshape and} F.~Hamel}.
\newblock Front propagation in periodic excitable media.
\newblock {\em Comm. Pure Appl. Math.}, 55(8), 949--1032, 2002.

\bibitem[BoM14]{BoMa2014}
{\scshape A.~Boden {\upshape and} K.~Matthies}.
\newblock Existence and homogenisation of travelling waves bifurcating from
  resonances of reaction-diffusion equations in periodic media.
\newblock {\em J. Dynam. Differential Equations}, 26(3), 405--459, 2014.

\bibitem[Car77]{Carp1977}
{\scshape G.~A.~Carpenter}.
\newblock A geometric approach to singular perturbation problems with
  applications to nerve impulse equations.
\newblock {\em J. Differential Equations}, 23(3), 335--367, 1977.

\bibitem[CDG02]{CDG02}
{\scshape D.~Cioranescu, A.~Damlamian, {\upshape and} G.~Griso}.
\newblock Periodic unfolding and homogenization.
\newblock {\em C. R. Math. Acad. Sci. Paris}, 335(1), 99--104, 2002.

\bibitem[CGW08]{CGW2008}
{\scshape X.~Chen, J.-S.~Guo, {\upshape and} C.-C.~Wu}.
\newblock Traveling waves in discrete periodic media for bistable dynamics.
\newblock {\em Arch. Ration. Mech. Anal.}, 189(2), 189--236, 2008.

\bibitem[Den91]{Deng1991}
{\scshape B.~Deng}.
\newblock The existence of infinitely many traveling front and back waves in
  the {F}itz{H}ugh-{N}agumo equations.
\newblock {\em SIAM J. Math. Anal.}, 22(6), 1631--1650, 1991.

\bibitem[Eck05]{Eck04}
{\scshape C.~Eck}.
\newblock Homogenization of a phase field model for binary mixtures.
\newblock {\em Multiscale Model. Simul.}, 3(1), 1--27 (electronic), 2004/05.

\bibitem[EFF82]{EFF1982}
{\scshape J.~W.~Evans, N.~Fenichel, {\upshape and} J.~A.~Feroe}.
\newblock Double impulse solutions in nerve axon equations.
\newblock {\em SIAM J. Appl. Math.}, 42(2), 219--234, 1982.

\bibitem[Eva72]{EvaI1972}
{\scshape J.~W.~Evans}.
\newblock Nerve axon equations. {I}. {L}inear approximations.
\newblock {\em Indiana Univ. Math. J.}, 21, 877--885, 1971/72.

\bibitem[GuH06]{GuHa2006}
{\scshape J.-S.~Guo {\upshape and} F.~Hamel}.
\newblock Front propagation for discrete periodic monostable equations.
\newblock {\em Math. Ann.}, 335(3), 489--525, 2006.

\bibitem[Has76]{Hast1976}
{\scshape S.~P.~Hastings}.
\newblock On the existence of homoclinic and periodic orbits for the
  {F}itzhugh-{N}agumo equations.
\newblock {\em Quart. J. Math. Oxford Ser. (2)}, 27(105), 123--134, 1976.

\bibitem[Has82]{Hast1982}
{\scshape S.~P.~Hastings}.
\newblock Single and multiple pulse waves for the {F}itz{H}ugh-{N}agumo
  equations.
\newblock {\em SIAM J. Appl. Math.}, 42(2), 247--260, 1982.

\bibitem[Hei01]{Hei2001}
{\scshape S.~Heinze}.
\newblock Wave solutions to reaction-diffusion systems in perforated domains.
\newblock {\em Z. Anal. Anwendungen}, 20(3), 661--676, 2001.

\bibitem[Hen81]{Hen81}
{\scshape D.~Henry}.
\newblock {\em Geometric theory of semilinear parabolic equations}, volume 840
  of {\em Lecture Notes in Mathematics}.
\newblock Springer-Verlag, Berlin, 1981.

\bibitem[HuZ95]{HuZi1995}
{\scshape W.~Hudson {\upshape and} B.~Zinner}.
\newblock Existence of traveling waves for reaction diffusion equations of
  {F}isher type in periodic media.
\newblock In {\em Boundary value problems for functional-differential
  equations}, pages 187--199. World Sci. Publ., River Edge, NJ, 1995.

\bibitem[JKL91]{JKP1991}
{\scshape C.~Jones, N.~Kopell, {\upshape and} R.~Langer}.
\newblock Construction of the {F}itz{H}ugh-{N}agumo pulse using differential
  forms.
\newblock In {\em Patterns and dynamics in reactive media ({M}inneapolis, {MN},
  1989)}, volume~37 of {\em IMA Vol. Math. Appl.}, pages 101--115. Springer,
  New York, 1991.

\bibitem[Jon84]{Jon1984}
{\scshape C.~K. R.~T.~Jones}.
\newblock Stability of the travelling wave solution of the
  {F}itz{H}ugh-{N}agumo system.
\newblock {\em Trans. Amer. Math. Soc.}, 286(2), 431--469, 1984.

\bibitem[LNW02]{LNW02}
{\scshape D.~Lukkassen, G.~Nguetseng, {\upshape and} P.~Wall}.
\newblock Two-scale convergence.
\newblock {\em Int. J. Pure Appl. Math.}, 2, 35--86, 2002.

\bibitem[McK70]{McK1970}
{\scshape H.~P.~McKean, Jr.}
\newblock Nagumo's equation.
\newblock {\em Advances in Math.}, 4, 209--223 (1970), 1970.

\bibitem[MiT07]{MT07}
{\scshape A.~Mielke {\upshape and} A.~Timofte}.
\newblock Two-scale homogenization for evolutionary variational inequalities
  via the energetic formulation.
\newblock {\em SIAM J. Math. Anal.}, 39(2), 642--668, 2007.

\bibitem[MRT14]{MRT14}
{\scshape A.~Mielke, S.~Reichelt, {\upshape and} M.~Thomas}.
\newblock Two-scale homogenization of nonlinear reaction-diffusion systems with
  slow diffusion.
\newblock {\em Netw. Heterog. Media}, 9(2), 353--382, 2014.

\bibitem[MSU07]{MSU2007}
{\scshape K.~Matthies, G.~Schneider, {\upshape and} H.~Uecker}.
\newblock Exponential averaging for traveling wave solutions in rapidly varying
  periodic media.
\newblock {\em Math. Nachr.}, 280(4), 408--422, 2007.

\bibitem[NAY62]{NAY1962}
{\scshape J.~Nagumo, S.~Arimoto, {\upshape and} S.~Yoshizawa}.
\newblock An active pulse transmission line simulating nerve axon.
\newblock {\em Proceedings of the IRE}, 50, 2061--2070, 1962.

\bibitem[Paz83]{Paz83}
{\scshape A.~Pazy}.
\newblock {\em Semigroups of linear operators and applications to partial
  differential equations}, volume~44 of {\em Applied Mathematical Sciences}.
\newblock Springer-Verlag, New York, 1983.

\bibitem[Rei15]{Diss-SR}
{\scshape S.~Reichelt}.
\newblock {\em Two-scale homogenization of systems of nonlinear parabolic
  equations}.
\newblock PhD thesis, Humboldt-Universit\"at zu Berlin, 2015.
\newblock
  http://edoc.hu-berlin.de/dissertationen/reichelt-sina-2015-11-27/PDF/reichelt.pdf.

\bibitem[Szm91]{Szmo1991}
{\scshape P.~Szmolyan}.
\newblock Transversal heteroclinic and homoclinic orbits in singular
  perturbation problems.
\newblock {\em J. Differential Equations}, 92(2), 252--281, 1991.

\bibitem[Xin00]{Xin2000}
{\scshape J.~Xin}.
\newblock Front propagation in heterogeneous media.
\newblock {\em SIAM Rev.}, 42, 161, 2000.

\bibitem[Yan85]{Yana1985}
{\scshape E.~Yanagida}.
\newblock Stability of fast travelling pulse solutions of the
  {F}itz{H}ugh-{N}agumo equations.
\newblock {\em J. Math. Biol.}, 22(1), 81--104, 1985.

\end{thebibliography}
\normalsize

\end{document}